\theoremstyle{plain}
\newtheorem{thm}{Theorem}[section]
\newtheorem{cor}[thm]{Corollary}
\newtheorem{lem}[thm]{Lemma}
\newtheorem{prop}[thm]{Proposition}
\newtheorem*{thmb}{Conjecture}
\theoremstyle{definition}
\newtheorem{defn}[thm]{Definition}
\numberwithin{equation}{section}
\def\XXint#1#2#3{{\setbox0=\hbox{$#1{#2#3}{\int}$}
     \vcenter{\hbox{$#2#3$}}\kern-.5\wd0}}
\begin{document}

\title[Riesz transforms and exponential potentials]
{The fractional Riesz transform and an exponential potential}
\author{Benjamin Jaye}
\address{Benjamin Jaye, Department of Mathematics,
Kent State University,
Kent, OH 44240}
\email{bjaye@kent.edu}
\author{Fedor Nazarov}
\address{Fedor Nazarov, Department of Mathematics,
Kent State University,
Kent, OH 44240}
\email{nazarov@math.kent.edu}
\author{Alexander Volberg}
\address{Alexander Volberg, Department of Mathematics,
Michigan State University,
East Lansing, MI 48824}
\email{volberg@math.msu.edu}

\date{\today}
\begin{abstract}In this paper we study the $s$-dimensional Riesz transform of a finite measure $\mu$ in $\mathbf{R}^d$, with $s\in (d-1,d)$.  We show that the boundedness of the Riesz transform of $\mu$ yields a weak type estimate for the Wolff potential $\mathcal{W}_{\Phi,s}(\mu)(x) = \int_0^{\infty}\Phi\bigl(\frac{\mu(B(x,r))}{r^s}\bigl) \frac{dr}{r},$ where $\Phi(t) = e^{-1/t^{\beta}}$ with $\beta>0$ depending on $s$ and $d$.  In particular, this weak type estimate implies that $\mathcal{W}_{\Phi,s}(\mu)$ is finite $\mu$-almost everywhere.

As an application, we obtain an upper bound for the Calder\'{o}n-Zygmund capacity $\gamma_s$ in terms of the non-linear capacity associated to the gauge $\Phi$.  It appears to be the first result of this type for $s>1$. \end{abstract}

\subjclass[2010]{Primary 42B20, 42B35. Secondary 31B15}

\maketitle

\section{Introduction}For an integer $d\geq 2$, let $s\in(d-1,d)$.  Define the $s$-dimensional Riesz transform of a finite nonnegative Borel measure $\mu$ by
$$R(\mu)(x) = \int_{\mathbf{R}^d}\frac{y-x}{|y-x|^{1+s}}d\mu(y).
$$
For any finite measure $\mu$, the integral defining $R(\mu)$ converges almost everywhere with respect to the Lebesgue measure in $\mathbf{R}^d$.  The aim of this paper is to show that the boundedness of the Riesz transform of a measure $\mu$ implies the $\mu$-almost everywhere finiteness of the Wolff potential associated to an exponential gauge.  More precisely, we obtain a (very) weak type estimate for such a potential.

Define the measure $\mathcal{L}$ on $(0,\infty)$ by $\mathcal{L}(E) = \int_{E} \frac{dr}{r}$ for $E\subset (0,\infty)$. For each $x\in \mathbf{R}^d$ and $\Delta\in (0,\infty)$, we denote $$E(x,\Delta) = \Bigl\{r\in(0,\infty)\!:\! \frac{\mu(B(x,r))}{r^s}\! >\! \Delta\!\Bigl\}.$$
Here $B(x,r)$ is the open ball of radius $r$, centred at $x$.
Let $||\cdot||_{L^{\infty}}$ be the essential supremum norm with respect to the Lebesgue measure in $\mathbf{R}^d$.  Our main result is the following theorem.

\begin{thm}\label{thm1} Suppose that
$||R(\mu)||_{L^{\infty}}\leq 1$.  There exist positive constants $C$ and $\alpha$, depending on $s$ and $d$, such that
\begin{equation}\label{weaktype}\begin{split}
\mu\bigl(\bigl\{x\in \mathbf{R}^d:\mathcal{L}(E(x, \Delta))>T\bigl\}\bigl)\leq \frac{C\mu(\mathbf{R}^d)}{\Delta \log^{\alpha} T},
\end{split}\end{equation}
for all $0<\Delta<\infty$ and $e<T<\infty$.
\end{thm}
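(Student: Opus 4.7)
The plan is to bound the mass $\mu(F_{\Delta,T})$ of the bad set $F_{\Delta,T} = \{x \in \mathbf{R}^d : \mathcal{L}(E(x,\Delta)) > T\}$ by converting the accumulation of large-density scales into a contradiction with $\|R(\mu)\|_{L^\infty} \leq 1$. The first step is a pigeonhole reduction on scales: fix a large dilation factor $\lambda > 1$ and, by a standard shift-and-average argument, pass to a subset $F' \subset F_{\Delta,T}$ of comparable $\mu$-measure whose points share a common lacunary grid $\{\lambda^k\}_{k \in \mathbf{Z}}$, so that for each $x \in F'$ the density $\mu(B(x,\lambda^k))/\lambda^{ks}$ exceeds $\Delta$ for at least $N \asymp T/\log \lambda$ of those indices $k$.

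The main technical step, which is also the main obstacle I anticipate, is to establish a quantitative scale-by-scale dichotomy driven by the boundedness of $R(\mu)$. At each lacunary scale $r = \lambda^k$ where the density at $x$ exceeds $\Delta$, one needs to show: either the restriction of $\mu$ to the annulus $B(x,\lambda r)\setminus B(x,r/\lambda)$ is extremely flat (concentrated in a very narrow $(d-1)$-dimensional slab through $x$), or a fixed amount of ``Riesz energy'' can be localized to that scale. A reflection or symmetrization argument (comparing $R(\mu)(x)$ with $R(\mu)(y)$ for $y$ the reflection of $x$ through a candidate hyperplane, or extracting a ``good point'' inside the annulus at which the Riesz transform must be large) should extract a gain of order $\Phi(\Delta) = e^{-1/\Delta^{\beta}}$ at every non-flat scale. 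The exponential shape of the gain is forced by the fact that an $s$-dimensional Riesz kernel with $s \in (d-1,d)$ can be very nearly cancelled by highly structured, nearly flat configurations, so one cannot hope for a polynomial gain in $\Delta$.

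The final step is to combine the per-scale gains across the $N$ lacunary scales. Consecutive flat scales cannot propagate indefinitely: flatness across a long window of scales forces the measure to be essentially supported on a single hyperplane, producing a density profile that is incompatible with the finiteness of $\mu(\mathbf{R}^d)$ or with $s > d-1$. Hence a definite fraction $cN$ of the $N$ available scales at each $x \in F'$ must be non-flat. Pairing the sum of the non-flat gains against the measure $\mu|_{F'}$ and invoking $\|R(\mu)\|_{L^\infty} \leq 1$ should produce
$$
\Delta \cdot \mu(F') \cdot (\log T)^{\alpha} \;\lesssim\; \mu(\mathbf{R}^d),
$$
with $\alpha \in (0,1)$ determined by how many effective non-flat scales can be summed without incurring redundant overlaps in the testing argument; rearranging recovers \eqref{weaktype}. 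The two most delicate points will be (i) the quantitative extraction of the per-scale gain, because of the near-cancellation phenomenon that dictates the exponential rather than polynomial gauge, and (ii) ensuring that the flat/non-flat dichotomy can be made simultaneous for $\mu$-a.e.\ point of $F'$, which will likely require a corona or stopping-time decomposition adapted to both the scale lattice and the support of $\mu$.
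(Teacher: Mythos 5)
Your proposal takes a genuinely different route from the paper, and there is a substantive gap in the part you identify as the main obstacle, which the paper's actual strategy is designed precisely to avoid.

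The most serious problem is the flat/non-flat dichotomy with a per-scale Riesz gain. You ask for: at each good scale $r=\lambda^k$ either $\mu$ is concentrated in a thin slab through $x$, or a localized Riesz contribution of order $e^{-1/\Delta^\beta}$ is extracted. No such lemma is in the paper, and the mechanism by which such a dichotomy would be proved is not indicated. The paper avoids it entirely because the actual structural fact driven by $\|R(\mu)\|_{L^\infty}\leq 1$ is dual to what you want: it is not that each non-flat scale forces a Riesz gain at $x$, but rather that the \emph{set of points} for which the density stays above $\delta$ over $q$ consecutive dyadic scales has $\mu$-measure at most $\tfrac{1}{q}\exp(C/\delta^\beta)\mu(B_0)$ (Proposition \ref{qualv}). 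This measure-theoretic smallness is what allows one, after a Cantor-type stopping-time construction, to find many nested balls of \emph{low} density at widely separated scales; the density-$>\Delta$ condition from the definition of a good scale is then exploited separately (via the $\Psi$-function/Fourier-dual pairing in Section \ref{prop3sec}) to lower-bound each partial Riesz transform in $L^2(\mu')$. It is the interplay between high density at the top-cover scale and low density at the bottom-cover scale that produces the gain, not a flatness alternative.

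Even granting your dichotomy, the closing step as written does not produce a contradiction. You propose to ``pair the sum of the non-flat gains against $\mu|_{F'}$ and invoke $\|R(\mu)\|_{L^\infty}\leq 1$,'' arriving at $\Delta\,\mu(F')\,(\log T)^\alpha\lesssim\mu(\mathbf{R}^d)$. But $\|R(\mu)\|_{L^\infty}\leq 1$ is a pointwise bound; contributions from different scales do not sum in $L^\infty$, and the truncated Riesz transform at distinct scales need not align to make $|R(\mu)(x)|$ large. The paper resolves this by moving to $L^2(\mu')$: the $T(1)$ theorem and Cotlar's lemma give a hereditary $L^2$ bound on $R^\#(\mu')$ (Proposition \ref{prop1}), the Cantor-scale separation gives almost-orthogonality of the partial Riesz transforms $R^{(k)}(\mu')$ (Proposition \ref{prop2}), and the density gap between top- and bottom-cover scales gives a uniform lower bound $\|R^{(k)}(\mu')\|_{L^2(\mu')}^2\gtrsim \gamma^4\Delta^4\mu'(\mathbf{R}^d)$ (Proposition \ref{prop3}, via an energy-minimization argument and a maximum principle specific to $s\in(d-1,d)$). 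Taking $N$ comparable to $(\Delta\gamma)^{-4}$ levels makes the three propositions incompatible; tracking $T\gtrsim N\cdot q$ together with $q\asymp\exp(C/\delta^\beta)$ and the power-type choices of the auxiliary parameters yields the stated $\log^\alpha T$ dependence. Your outline contains no analogue of the hereditary $L^2$ machinery, the Cantor separation, or the extremal/maximum-principle step that makes the per-level lower bound quantitative, and the final ``pairing against $\mu|_{F'}$'' step as stated is not a valid inference from an $L^\infty$ hypothesis.

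Finally, a smaller point: the flatness-propagation claim (``flatness across a long window forces $\mu$ essentially onto a hyperplane, incompatible with $s>d-1$'') would itself need a quantitative argument, and even qualitatively it does not hold at the generality you need, since the flat directions could rotate from scale to scale. The paper sidesteps this by never reasoning about approximate hyperplanes at all; the only geometric input of a flatness flavour is the cone estimate of Lemma \ref{topconebig}, which is used inside the Vihtil\"{a}-type argument rather than as a per-scale dichotomy.
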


A consequence of Theorem \ref{thm1} is that the condition $||R(\mu)||_{L^{\infty}}\leq 1$ implies that
\begin{equation}\label{exppot}\int_0^{\infty}\!\!\Phi\Big(\frac{\mu(B(x,r))}{r^s}\Bigl)\frac{dr}{r}<\infty \text{ for }\mu\text{-almost every }x\in \mathbf{R}^d,
\end{equation}
where $\Phi(t) = e^{-1/t^{\beta}}$ with any $\beta>1/\alpha$.
The estimate (\ref{exppot}) is strong enough to deduce that the Calder\'{o}n-Zygmund capacity associated to the $s$-dimensional Riesz transform is dominated by the non-linear capacity associated to $\Phi$, as we will see in Section \ref{exppotsec} below.

The almost everywhere finiteness of an exponential potential is substantially weaker than the well-known conjecture (see \cite{ENV10, Tol11}), which states that for any finite measure $\mu$,
\begin{equation}\label{strongtype}||R(\mu)||_{L^{\infty}}\leq 1 \implies \int_{\mathbf{R}^d}\int_0^{\infty}\Big(\frac{\mu(B(x,r))}{r^s}\Bigl)^2\frac{dr}{r}d\mu(x)<\infty.
\end{equation}
In \cite{MPV05}, Mateu, Prat, and Verdera proved (\ref{strongtype}) in the range $0<s<1$ by using curvature methods, but it is not known whether this result should continue to hold if $s>1$ and $s\not\in \mathbf{N}$.  Any such bound (even Theorem \ref{weaktype} above) is false in the case of integer $s$.   In the special case of measures supported on Cantor sets with certain additional geometric properties, the conjecture (\ref{strongtype}) has been proven for all $s$, see \cite{Tol11, EV11}.

For a general measure $\mu$, the result here appears to be the first to show that a positive potential of any type can be controlled by the Riesz transform with $s$ outside the curvature range. It can be viewed as a quantitative version of the recent theorem of Eiderman, Nazarov and Volberg \cite{ENV11}.  Recall that a measure $\mu$ is called \textit{totally lower irregular} if
\begin{equation}\label{lowirreg}\liminf_{r\rightarrow 0}\frac{\mu(B(x,r))}{r^s} = 0, \text{ for }\mu\text{-almost every } x\in \mathbf{R}^d.
\end{equation}
In \cite{ENV11}, the nonexistence is proved of a finite totally lower irregular measure $\mu$, supported on a set of finite $s$-dimensional Hausdorff measure, such that $\mu$ has a bounded Riesz transform.

A careful inspection of the proof in \cite{ENV11} reveals the primary qualitative step in the argument to be precisely the use of the condition (\ref{lowirreg}), which is used in a Cantor construction in order to obtain `almost orthogonality' of partial Riesz transforms associated to different Cantor levels.

In order to find a quantitative substitute for (\ref{lowirreg}), we revisit a very nice theorem of Vihtil\"{a}.  In \cite{V96}, the nonexistence is proved of a nontrivial measure $\mu$ with bounded Riesz transform, which has \textit{positive lower density}, that is
\begin{equation}\label{lowdensity}
\liminf_{r\rightarrow 0}\frac{\mu(B(x,r))}{r^s} > 0, \text{ for }\mu\text{-almost every } x\in \mathbf{R}^d.
\end{equation}
The result in \cite{V96} is proved for all $s\in (0,d)$, $s\not\in \mathbf{N}$.  In this paper we restrict our attention to $s\in (d-1,d)$.  This restriction is perhaps not so important for getting a quantitative version of Vihtil\"{a}'s theorem.  However, in another part of the argument we will make use of a certain maximum principle for the Riesz transform with $s\geq d-1$ (see Proposition \ref{maxp}), and we do not know if some analogue of this result is available for $s<d-1$. 

The general idea of our paper is to use multi-scale analysis to show that the Riesz transform of a measure $\mu$ is large provided $\mu$ possesses many scales of significant  density. We will then marry this with the fractal construction in \cite{ENV11}.   It was somewhat surprising that this process should estimate a positive potential, even one as weak as in (\ref{exppot}).

The result of \cite{V96} leans heavily on the theory of tangent measures.   By their definition as weak limits, tangent measures carry little quantitative information.  Therefore our first task is to derive a quantitative version of Vihtil\"{a}'s argument.  Since tangent measures have found several applications in the field of geometric measure theory (see for example \cite{Mat95b, MP95}), this may be of interest to specialists.

We remark that multi-scale methods are somewhat notorious for giving exponential (or logarithmic) dependence as in (\ref{weaktype}), even in those cases when the true dependence should be a power one; cf. \cite{Tao08} and \cite{NPV10}.  The bound here is therefore no indication the conjectured estimate (\ref{strongtype}) is false.  On the contrary, it may be viewed as further evidence to support the validity of (\ref{strongtype}).  We also do not rule out that the methods here could be improved to yield a power bound in the scale counting parameter $T$ in (\ref{weaktype}).  In order to obtain such an improvement, the bounds of Proposition \ref{qualv} below would have to be significantly strengthened.

\subsection{The plan of the paper} After a discussion of the preliminaries in Section \ref{prelim}, the paper splits into two almost independent parts.  In the first part (Section \ref{vihsec}), we develop the quantitative version of Vihtil\"{a}'s theorem.  That is formulated in Proposition \ref{qualv} below.  This proposition is the only thing used in the second part, which is devoted to proving Theorem \ref{thm1}.

Assuming $\mathcal{L}(E(x,\Delta))$ is large on a noticeable set, we construct a certain Cantor type set.  This is carried out in Section \ref{Cantorsec}.  Section \ref{l2ests} begins with three $L^2$ estimates, from which we derive a contradiction and hence prove Theorem \ref{thm1}.  The remainder of Section \ref{l2ests}, together with Section \ref{prop3sec} are devoted to proving these three estimates.  In Section \ref{exppotsec}, we conclude the paper with a brief discussion of the relationship between the Cald\'{e}ron-Zygmund capacity and the non-linear Wolff capacity associated to an exponential gauge.

\subsection{Acknowledgement}  We are very thankful to  Vladimir Eiderman for initiating this project, and for the many insightful remarks he has made which have helped shape the proof.   Should the reader have a positive opinion of this paper, they should certainly view Professor Eiderman as a co-author (despite his insistence to the contrary).

\section{Preliminaries}\label{prelim}

\subsection{Notation}  In what follows $C$, $c$, or $C_j$, $c_j$ (for $j\in \mathbf{N}$) are respectively large and small positive constants depending on $s$ and $d$.  We enumerate them so that the constant with index $j$ can be chosen in terms of constants with lower indices  (for example $C_{96}$ can depend on $c_{95}$ and $C_{4}$).  Within a specific argument, if a constant $C$ or $c$ does not have an index, then it may depend on all numbered constants chosen up to that moment, and can change from line to line.  At the very least, every large constant is greater than $1$, and every small constant is less than $1$.


Throughout the paper, $m_d$ will denote the $d$-dimensional Lebesgue measure.  Given a function $f$, either scalar or vector valued, $||f||_{L^{\infty}}$ will always stand for the essential supremum norm of $f$ with respect to $m_d$.  The quantity $\text{osc}_{E}(f) = \sup\{|f(x) -f(y)|: x,y\in E\}$ will be called the oscillation of $f$ over the set $E\subset \mathbf{R}^d$.

We adopt the standard notation that $B(x,r)$ is an \textit{open} ball of radius $r$, centred at $x$.  The $\varepsilon$ neighbourhood of a set $E$ shall refer to the \textit{open} neighbourhood $\{y\in \mathbf{R}^d : |y-x|<\varepsilon \text{ for some }x\in E\}$.   We denote the closure of $E$ by $\overline{E}$.

We denote by $\mathbf{N}$ the set of natural numbers $\{1,2,3,4,\dots\}$, and by $\mathbf{Z}_+$ the set of nonnegative integers $\mathbf{N}\cup\{0\}$.

\subsection{Growth conditions and $L^2(\mu)$ boundedness} In this section we will mention the key facts concerning the s-dimensional Riesz transform that will be used in what follows.  First of all, we will make regular use of the following necessary condition for the boundedness of the Riesz transform.
\begin{lem}\label{neccond}  Suppose $||R(\mu)||_{L^{\infty}}\leq 1$.  There is a constant $C_1$ such that, for any ball $B(x,r)\subset \mathbf{R}^d$, one has
\begin{equation}\label{growth}\mu(B(x,r))\leq C_1 r^s.
\end{equation}
\end{lem}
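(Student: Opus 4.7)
The plan is to argue by contradiction: assuming $M := \mu(B(x_0, r))$ is very large compared to $r^s$, I aim to produce a set of positive Lebesgue measure on which $|R(\mu)|$ exceeds $1$.

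First I would fix a unit vector $e_0$ and introduce two small test balls $V_{\pm} = B(x_0 \pm 2r e_0, \delta r)$ symmetrically positioned on either side of $B(x_0, r)$, with $\delta$ a small absolute constant. A direct geometric computation shows that for every $y \in V_{\pm}$ and every $z \in B(x_0, r)$,
$$\pm\left\langle \frac{z-y}{|z-y|^{1+s}}, e_0\right\rangle \leq -\frac{c}{r^s},$$
since $\pm\langle z-y, e_0\rangle$ is of order $-r$ while $|z-y|$ is of order $r$. Integrating over $z \in B(x_0, r)$ yields the pointwise bound $\pm\langle R(\mu|_{B(x_0, r)})(y), e_0\rangle \leq -cM/r^s$ on $V_{\pm}$.

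Next, I would pair $\langle R(\mu), e_0\rangle$ with the mean-zero test function $\phi = |V_+|^{-1}(\chi_{V_+} - \chi_{V_-})$. The hypothesis $||R(\mu)||_{L^\infty} \leq 1$ gives $|\int \phi(y)\langle R(\mu)(y), e_0\rangle\, dy| \leq ||\phi||_{L^1} = 2$. Applying Fubini after the splitting $\mu = \mu|_{B(x_0, r)} + \mu|_{B(x_0, r)^c}$, the previous step forces the contribution of $\mu|_{B(x_0, r)}$ to this paired integral to be at most $-2cM/r^s$.

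The main obstacle, and the step I expect to be the most delicate, is controlling the tail integral $\int_{B(x_0, r)^c} G(z)\, d\mu(z)$ by an absolute constant independent of $M$, where $G(z) = \int \phi(y)\langle K(z-y), e_0\rangle\, dy$ and $K(w) = w/|w|^{1+s}$. For $z$ far from $B(x_0, r)$, the mean-zero property of $\phi$ combined with the smoothness of $K$ away from the origin delivers $|G(z)| \leq C r/|z-x_0|^{1+s}$, and integration against the finite measure $\mu$ yields a uniformly bounded contribution. For $z$ in the near annulus $B(x_0, C r)\setminus B(x_0, r)$ the kernel is nearly singular, and an auxiliary scale-invariant growth estimate on $\mu$ at comparable scales is needed; this is installed by running the same argument inductively over nested scales, starting from the trivial bound $\mu(B(x,\rho)) \leq \mu(\mathbf{R}^d)$. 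Once the tail is uniformly bounded, the two competing inequalities force $M \leq C_1 r^s$, as desired.
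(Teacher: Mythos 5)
The paper itself does not give a proof of this lemma; it cites \cite{MPV05,ENV11}, where the bound follows from a short Fourier-analytic argument. Your proposal attempts an elementary alternative, and the framework (testing $\langle R(\mu),e_0\rangle$ against a mean-zero $\phi=|V_+|^{-1}(\chi_{V_+}-\chi_{V_-})$, splitting $\mu$ into $\mu|_B$ and $\mu|_{B^c}$, and using $\int\phi\,\langle R(\mu),e_0\rangle\,dm_d\leq\|\phi\|_{L^1}=2$) is internally consistent as far as it goes. However, the step you flag as ``the most delicate'' is in fact a genuine gap, not a technicality that can be ``installed'' by routine induction over scales.

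Here is the concrete difficulty. Write $G(z)=\int\phi(y)\,\langle K(z-y),e_0\rangle\,dm_d(y)$. You need $\bigl|\int_{B^c}G\,d\mu\bigr|$ bounded by an absolute constant, while the contribution of $\mu|_B$ gives a term $-2cM/r^s$. In the near annulus $r<|z-x_0|<Ar$ you have $|G(z)|\lesssim r^{-s}$ (and $\lesssim(\delta r)^{-s}$ near $V_\pm$), and the $\mu$-mass of that annulus is controlled only by the very growth constant $C_1$ you are bootstrapping: $\mu(B(x_0,Ar)\setminus B)\leq C_1(Ar)^s$. Thus the near-annulus tail is of size $\kappa\,C_1$ where $\kappa$ is a constant depending on $s,d,\delta,A$ that is bounded \emph{away from zero}. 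The far tail is also $O(C_1)$. So the inductive inequality you obtain has the shape $\mu(B)/r^s\leq A_0+\kappa\,C_1$, and there is no mechanism in your construction to force $\kappa<1$; increasing $A$ or shrinking $\delta$ does not help, because the positivity gain $c$ in the bound $\pm\langle K(z-y),e_0\rangle\leq -c/r^s$ degrades at exactly the same rate. Hence the bootstrap does not close, and the ``two competing inequalities'' do not force $M\leq C_1r^s$.

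The reason the Fourier proof avoids this entirely is that it pairs $R(\mu)$ against a \emph{fixed} $L^1$ vector field $\psi$ (defined in Section~\ref{fourier} of the paper) chosen so that $R^*(\psi\,m_d)=\varphi$ is a \emph{nonnegative} bump with $\varphi\geq 1$ on $B(0,1)$. Scaling and duality give $r^{s-d}\mu(B(x_0,r))\leq r^{s-d}\!\int\varphi\bigl(\tfrac{\cdot-x_0}{r}\bigr)d\mu=\int R(\mu)\cdot\psi\bigl(\tfrac{\cdot-x_0}{r}\bigr)dm_d\leq\|\psi\|_{L^1}r^{d}\cdot r^{s-d}$, i.e.\ $\mu(B(x_0,r))\leq\|\psi\|_{L^1}r^s$. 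Because $\varphi\geq 0$ everywhere, the ``tail'' $\int_{B^c}\varphi(\tfrac{\cdot-x_0}{r})\,d\mu$ has a favorable sign and can simply be dropped; there is no bootstrap and no a priori growth is needed. Your $\phi$, being a difference of indicators, produces a $G=R^*(\phi e_0\,m_d)$ with no sign control outside $B$, which is precisely what creates the unbounded tail. If you wish to make an elementary proof work, you would need either a test function whose $R^*$-image is signed favorably on all of $\mathbf{R}^d$ (which essentially reproduces the Fourier construction) or a genuinely different mechanism to close the induction, and at present the proposal supplies neither.
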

Lemma \ref{neccond} can be proved by elementary Fourier analysis, see \cite{MPV05, ENV11}.   The next result we will require is a suitable version of Cotlar's lemma.  Define the maximal Riesz transform $R^{\#}(\mu)$ by
$$R^{\#}(\mu)(x) = \sup_{B:\, x\in B}\Bigl|\int_{\mathbf{R}^d\backslash 2B} \frac{y-x}{|y-x|^{1+s}} d\mu(y)\Bigl|,
$$
where the supremum is taken over all balls $B$ such that $x\in B$.  Here (and elsewhere) $2B$ is the concentric double of $B$.  The following lemma can be proved by mimicking the simple argument of Lemma 3 in \cite{V96}.

\begin{lem}\label{cotlar}  Suppose that $||R(\mu)||_{L^{\infty}}\leq 1$.  There is a constant $C_2$ such that
$$|R^{\#}(\mu)(x)|\leq C_2, \text{ for all }x\in \mathbf{R}^d.
$$
\end{lem}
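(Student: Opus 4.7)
The plan is the standard Cotlar-type comparison argument. Fix $x\in\mathbf{R}^d$ and a ball $B=B(x_0,r)$ containing $x$. Since $|x-x_0|<r$, the doubled ball $2B=B(x_0,2r)$ contains $B(x,r)$, so every $y\in\mathbf{R}^d\setminus 2B$ satisfies $|y-x|\geq r$. For any point $z\in B(x,r/2)$ at which the integral defining $R(\mu)(z)$ converges, split
\begin{equation*}
\int_{\mathbf{R}^d\setminus 2B}\frac{y-x}{|y-x|^{1+s}}\,d\mu(y)
= R(\mu)(z)-\int_{2B}\frac{y-z}{|y-z|^{1+s}}\,d\mu(y)
+\int_{\mathbf{R}^d\setminus 2B}\Bigl[\frac{y-x}{|y-x|^{1+s}}-\frac{y-z}{|y-z|^{1+s}}\Bigr]d\mu(y).
\end{equation*}
The three terms will be estimated separately: the first is bounded by $\|R(\mu)\|_{L^\infty}\leq 1$ for Lebesgue-a.e.\ choice of $z$, and it remains to control the other two.

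For the tail error (the third term), note that for $|y-x|\geq r$ and $|x-z|\leq r/2$ we have $|y-z|\asymp|y-x|$, so a routine computation with the mean value theorem gives
\begin{equation*}
\Bigl|\tfrac{y-x}{|y-x|^{1+s}}-\tfrac{y-z}{|y-z|^{1+s}}\Bigr|\leq \frac{C|x-z|}{|y-x|^{1+s}}.
\end{equation*}
Dyadically decomposing the region $\{|y-x|\geq r\}$ into annuli $2^k r\leq|y-x|<2^{k+1}r$ and applying the growth bound $\mu(B(x,2^{k+1}r))\leq C_1(2^{k+1}r)^s$ from Lemma \ref{neccond}, one readily obtains $\int_{|y-x|\geq r}|y-x|^{-(1+s)}d\mu(y)\leq C/r$, and hence the whole third term is $\leq C|x-z|/r\leq C$.

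For the local term (the middle one), the plan is an averaging argument over $z\in B(x,r/2)$. Bounding $\bigl|\int_{2B}\frac{y-z}{|y-z|^{1+s}}d\mu(y)\bigr|\leq \int_{2B}|y-z|^{-s}d\mu(y)$ and applying Fubini,
\begin{equation*}
\int_{B(x,r/2)}\int_{2B}\frac{d\mu(y)}{|y-z|^s}\,dm_d(z)
=\int_{2B}\Bigl[\int_{B(x,r/2)}\frac{dm_d(z)}{|y-z|^s}\Bigr]d\mu(y).
\end{equation*}
Since $s<d$, the inner integral is bounded uniformly in $y$ by $Cr^{d-s}$, and the growth condition gives $\mu(2B)\leq C_1(2r)^s$, so the double integral is $\leq Cr^d\asymp m_d(B(x,r/2))$. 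Hence the average of the local term over $z\in B(x,r/2)$ is $O(1)$, so by Chebyshev the set of $z\in B(x,r/2)$ on which it is $\leq C$ has positive Lebesgue measure. This set must intersect the full-measure set on which $R(\mu)(z)$ is defined and bounded by $1$; picking such a $z$ and combining the three estimates yields a uniform bound on the truncated transform, and taking the supremum over balls $B\ni x$ proves the lemma.

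There is no real obstacle here — the only mildly delicate point is the simultaneous use of two a.e.\ properties of $z$ (smallness of the local integral and $L^\infty$-boundedness of $R(\mu)(z)$), which is immediate since each holds on a set of positive Lebesgue measure in $B(x,r/2)$.
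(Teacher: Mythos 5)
Your proof is correct. The paper itself does not write out an argument for this lemma; it simply asserts that it follows by mimicking the proof of Lemma~3 in Vihtil\"{a}'s paper \cite{V96}, and the argument you give is precisely that standard Cotlar-type comparison: pick an auxiliary point $z$ near $x$, split off $R(\mu)(z)$ (bounded a.e.\ by $1$), control the tail error via the kernel Lipschitz estimate together with the growth bound of Lemma~\ref{neccond}, and dispose of the local piece $\int_{2B}|y-z|^{-s}\,d\mu(y)$ by a Fubini averaging over $z\in B(x,r/2)$ using $s<d$, then select $z$ in the positive-measure intersection of the two good sets. All the quantitative steps check out, and the a.e.\ convergence needed for the splitting is supplied by Fubini as you note. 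So this is the expected proof, carried out correctly.
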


Lemmas \ref{neccond} and \ref{cotlar} ensure that the $s$-dimensional maximal Riesz transform together with the measure $\mu$ satisfy the hypotheses of the $T(1)$-theorem of \cite{NTV03}, which is the next result we will state.
\begin{thm}[$T(1)$-Theorem]\label{T1thm}  Suppose $||R(\mu)||_{L^{\infty}}\leq 1$.  There is a constant $C_3$ such that
\begin{equation}\label{T1}
\int_{\mathbf{R}^d}|R^{\#}(f\mu)|^2 d\mu \leq C_3 \int_{\mathbf{R}^d} |f|^2 d\mu, \text{ for all }f\in L^2(\mu).
\end{equation}
\end{thm}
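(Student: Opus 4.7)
The plan is to deduce this $L^2(\mu)$ bound from the non-homogeneous $T(1)$ theorem of Nazarov--Treil--Volberg, whose hypotheses I would first verify in this setting: (i) the $s$-dimensional growth condition $\mu(B(x,r))\le C_1 r^s$, which is exactly Lemma \ref{neccond}; (ii) the ``$T(1)\in\mathrm{BMO}$''-type hypothesis, which here is much stronger than needed, namely $\|R(\mu)\|_{L^\infty(m_d)}\le 1$ reinforced by the pointwise estimate $R^{\#}(\mu)\le C_2$ from Lemma \ref{cotlar}; and by antisymmetry of the Riesz kernel the analogous control is automatic for the formal adjoint, so (iii) a weak boundedness property follows from (i)--(ii) by a standard argument.

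With hypotheses in hand, the heart of the proof is the NTV random-grid scheme. I would fix a random dyadic lattice $\mathcal{D}^{\omega}$ on $\mathbf{R}^d$ obtained by a random translation (and, if desired, rotation) of a reference grid, decompose $f\in L^2(\mu)$ into martingale differences $f=\sum_{Q\in\mathcal{D}^{\omega}}\Delta_Q f$ orthogonal in $L^2(\mu)$, and then split cubes into \emph{good} and \emph{bad}, a cube being bad if its small neighbourhood meets the boundary of a sufficiently larger ancestor in $\mathcal{D}^{\omega}$. A probabilistic estimate shows each fixed cube is good with probability close to $1$, so that after averaging over $\omega$ the bad-cube contribution can be absorbed on the left of the eventual inequality. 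For pairs of good cubes $Q,Q'$ one estimates $\langle R_{\varepsilon}(\Delta_Q f\cdot\mu),\Delta_{Q'}f\rangle_{\mu}$ off-diagonally using the H\"older regularity of the Riesz kernel together with the $s$-growth bound, obtaining geometric decay in the scale gap between $Q$ and $Q'$; summation via a Schur-type argument produces $\sum_Q \|\Delta_Q f\|_{L^2(\mu)}^2 \lesssim \|f\|_{L^2(\mu)}^2$. The diagonal contribution is handled by an NTV paraproduct built from the a priori bound on $R(\mu)$. This yields the bound on truncated transforms $R_{\varepsilon}(f\mu)$ uniformly in $\varepsilon$, and one passes to the maximal operator $R^{\#}$ by invoking Cotlar's inequality (Lemma \ref{cotlar}) now applied to $f\mu$ rather than to $\mu$.

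The main obstacle is the non-doubling of $\mu$: essentially every classical ingredient (Calder\'on--Zygmund decomposition, BMO theory, standard Littlewood--Paley) fails and must be replaced by its non-homogeneous analog. Concretely, the delicate step is the good/bad cube dichotomy: one has to choose the parameters governing ``badness'' and the exponents controlling the probability of badness so that the bad part's contribution, after averaging, is strictly smaller than the target $L^2(\mu)$ norm on the right-hand side, thus permitting absorption. Once this bookkeeping is set up correctly, the off-diagonal Schur estimate and the paraproduct construction run along familiar lines.
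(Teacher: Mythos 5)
Your proposal follows the paper's route: verify the $s$-growth condition (Lemma~\ref{neccond}) and the Cotlar bound (Lemma~\ref{cotlar}), observe that the weak boundedness property is automatic for the antisymmetric Riesz kernel, and then invoke the non-homogeneous $T(1)/T(b)$ theorem of Nazarov--Treil--Volberg; the paper simply cites \cite{NTV03} at this point, while you additionally sketch its random-lattice, good/bad-cube, martingale-difference proof, which is consistent background rather than a different argument. One small slip in your last step: you cannot ``apply Lemma~\ref{cotlar} to $f\mu$'' to pass to $R^{\#}$, since that lemma presupposes $\|R(f\mu)\|_{L^{\infty}}\leq 1$, which has no reason to hold for general $f\in L^2(\mu)$; what is actually needed there is a different Cotlar-type pointwise inequality bounding $R^{\#}(f\mu)$ by maximal functions of $R_{\varepsilon}(f\mu)$ and of $f$ --- but this is moot, because the cited theorem in \cite{NTV03} already yields the bound for the maximal truncation directly.
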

Theorem \ref{T1thm} is a special case of the $T(b)$-theorem in \cite{NTV03}.   For our purposes Lemma \ref{neccond} and Theorem \ref{T1thm} are especially useful since they are \textit{hereditary} in the measure $\mu$ - if we restrict the measure to any subset, then the conditions continue to hold with the same constants.  This will allow us great flexibility when constructing the Cantor set.  This hereditary property is not true in general for the  $L^{\infty}$ bound.

We will also need an analogue of (\ref{T1}) for the adjoint Riesz transform, which is defined for a vector valued Borel measure $\nu$ by
$$R^{*}(\nu)(x) = -\int_{\mathbf{R}^d}\frac{y-x}{|y-x|^{1+s}}\cdot d\nu(y).
$$
The maximal adjoint Riesz transform is then given by
$$(R^*)^{\#}(\nu)(x) =  \sup_{B:\, x\in B}\Bigl|\int_{\mathbf{R}^d\backslash 2B} \frac{y-x}{|y-x|^{1+s}} \cdot d\nu(y)\Bigl|.
$$
Let $f=(f_1,\dots, f_d)$ be a vector field in $L^2(\mu)$.  For any ball $B$ and $x\in B$, note that
$$\Bigl|\int_{\mathbf{R}^d\backslash 2B} \frac{y-x}{|y-x|^{1+s}} \cdot f(y) d\mu(y)\Bigl|\leq \sum_{j=1}^d\Bigl|\int_{\mathbf{R}^d\backslash 2B} \frac{y-x}{|y-x|^{1+s}} f_j(y) d\mu(y)\Bigl|.
$$
Therefore, we have $[(R^*)^{\#}(f\mu)]^2\leq d\sum_{j=1}^{d}[R^{\#}(f_j\mu)]^2$, and Theorem \ref{T1thm} yields
\begin{equation}\label{T1adj}\int_{\mathbf{R}^d}|(R^*)^{\#}(f\mu)|^2 d\mu \leq C_3d \int_{\mathbf{R}^d} |f|^2 d\mu.
\end{equation}




\subsection{The action on the Fourier side}\label{fourier}  We conclude the preliminaries by recapping how the $s$-dimensional Riesz transform acts on the Fourier side.  All these properties can be easily derived using Fourier analysis, see for example \cite{SW71}.  First note that there exists a constant $b=b(s,d)\in \mathbf{R}\backslash \{0\}$ such that, for any $f$ in the Schwartz class and $\xi\in \mathbf{R}^d$,
\begin{equation}\label{rieszfourier}
\widehat{R(f m_d)}(\xi) = ib\frac{\xi}{|\xi|^{d+1-s}}\widehat{f}(\xi).
\end{equation}
Let $\varphi\in C^{\infty}_0(B(0,2))$ be a nonnegative radial bump function, such that $\varphi\geq 1$ on $B(0,1)$, $\varphi \leq 2^d$, $|\nabla \varphi |\leq 2\cdot 2^d$ on $B(0,2)$, and $\int_{B(0,2)}\varphi dm_d = m_d(B(0,2))$.

Define the vector field $\psi= \frac{-1}{ib}\mathcal{F}^{-1}(\xi |\xi|^{d-1-s}\widehat{\varphi}(\xi)).$ Then $\psi$ satisfies the decay estimate
\begin{equation}\label{psidecay}
|\psi(x)|\leq \frac{C_5}{(1+|x|)^{2d-s}},
\end{equation}
see for example \cite{SW71}, Chapter 4.  Combining the definition of $\psi$ with (\ref{rieszfourier}), we formally obtain
\begin{equation}\label{fourequal}
R^{*}(\psi  \,m_d) = \varphi,
\end{equation}
and this is justified since $\psi\in L^1(\mathbf{R}^d)$, see (\ref{psidecay}).

\section{A quantitative variant of Vihtil\"{a}'s theorem}\label{vihsec}This section is devoted to a suitable version of Vihtil\"{a}'s theorem.  It is at this point where the logarithmic dependence on $T$ arises in Theorem \ref{thm1}.

First of all, we need to introduce a device to measure the number of scales at which the density of a measure $\mu$ exceeds a given threshold. For this purpose, introduce a density parameter $\delta\in (0,1)$.  Then for a ball $B_0 = B(x_0,r_0)$ and $q\in \mathbf{N}$, define the set $E_{\delta}^{q}(B_0)$ by
\begin{equation}
\begin{split}\nonumber E^{q}_{\delta}(B_0)= \Big\{x\in \frac{1}{2}B_0\, :\, & \frac{\mu(B(x,r))}{r^s}> \delta  \text{ for all }r\in\Bigl[\frac{r_0}{2^{q}}, \frac{r_0}{4}\Bigl]\Big\}.
\end{split}
\end{equation}

 Note that the set $E_{\delta}^q(B_0)$ is open. To see this, let $(x_j)_j$ be a sequence in $\mathbf{R}^d\backslash E_{\delta}^q(B_0)$ that converges to some $x\in \mathbf{R}^d$.  For each $j$, we have $\mu(B(x_j,r_j))\leq \delta r_j^s$ for some $r_j \in [\frac{r_0}{2^{q}}, \frac{r_0}{4}]$. By passing to a subsequence if necessary, we may assume that $r_j \rightarrow r$, with  $r\in [\frac{r_0}{2^{q}}, \frac{r_0}{4}]$.  As a result, $\displaystyle\liminf_{j\rightarrow \infty}B(x_j, r_j)\supset B(x,r)$, and therefore $\mu(B(x,r))\leq \displaystyle\liminf_{j\rightarrow \infty}\mu(B(x_j, r_j)) \leq \delta r^s$.  Hence $x\not\in E_{\delta}^q(B_0)$.

The quantitative version of Vihtil\"{a}'s theorem should read that, provided the Riesz transform is bounded, the measure of the exceptional set $E_{\delta}^{q}(B_0)$ should decrease with $q$ at a specific rate. 
\begin{prop}\label{qualv}Suppose $||R(\mu)||_{L^{\infty}}\leq 1$.  Then there exist positive constants $C_{16}$ and $\beta$, depending on $s$ and $d$, such that
\begin{equation}\label{EB0small}\mu(E_{\delta}^{q}(B_0))\leq \frac{1}{q}\exp\Bigl(\frac{C_{16}}{\delta^{\beta}}\Bigl) \mu(B_0).
\end{equation}
\end{prop}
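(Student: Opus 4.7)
The plan is to derive (\ref{EB0small}) by a multi-scale argument that converts the high-density condition at many scales into a quantitative lower bound on $|R(\mu)|$, contradicting $\|R(\mu)\|_{L^{\infty}}\leq 1$ if $\mu(E_\delta^q(B_0))$ is too large. The three ingredients are the scaled Fourier identity $R^{*}(\psi_r^y m_d)(x)=\varphi((x-y)/r)$ coming from Section \ref{fourier} (with $\psi_r^y(z):=r^{s-d}\psi((z-y)/r)$), the upper density estimate of Lemma \ref{neccond}, and the $L^2$ tools (\ref{T1})-(\ref{T1adj}) together with Cotlar's Lemma \ref{cotlar}. The hereditary property of the latter three is crucial: it lets us restrict $\mu$ to any subset of $E_\delta^q(B_0)$ without any loss in constants.

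The first step is pigeonholing on scales, which is the source of the $1/q$ factor in (\ref{EB0small}). Set $r_k=r_0/2^k$. On $E_\delta^q(B_0)$ one has $\delta<D_{r_k}\mu(x):=\mu(B(x,r_k))/r_k^s\leq C_1$ for every $k\in\{2,\dots,q\}$. Averaging in $k$ and using a Chebyshev-type argument, if $\mu(E_\delta^q(B_0))$ violates (\ref{EB0small}) then one extracts a large subset $E\subset E_\delta^q(B_0)$, a sub-collection of scales $\{r_{k_j}\}_{j=1}^{q'}$ whose size $q'$ grows with $q$, and a target density $\Delta\in[\delta,C_1]$ such that $D_{r_{k_j}}\mu(x)\approx\Delta$ for every $x\in E$ and every selected scale, with consecutive selected scales separated by a large geometric factor (so that the corresponding test fields are almost orthogonal).

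Pair the identity $R^{*}(\psi_{r_{k_j}}^{x}m_d)=\varphi((\cdot-x)/r_{k_j})$ against $\mu$ as follows: at each $r_{k_j}$ pick a maximal $r_{k_j}$-separated net $\{x_i^j\}\subset E$ and signs $\varepsilon_i^j\in\{\pm 1\}$, and form
\[ g(z)=\sum_{j,i}\varepsilon_i^j\,\psi_{r_{k_j}}^{x_i^j}(z). \]
Applying the identity term-by-term, the diagonal part of $\int R(\mu)\cdot g\,dm_d$ has $(j,i)$-entry $\int\varphi((x-x_i^j)/r_{k_j})\,d\mu(x)\gtrsim \Delta\,r_{k_j}^s$. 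A random-sign / Khintchine choice of $\varepsilon_i^j$ combined with the $T(1)$ bound (\ref{T1adj}) for same-scale off-diagonal terms, and Cotlar's Lemma \ref{cotlar} together with the decay (\ref{psidecay}) of $\psi$ for cross-scale ones, yields a lower bound on $|\int R(\mu)\cdot g\,dm_d|$ that grows with $q'$. Comparing with the universal upper bound $\|R(\mu)\|_{L^{\infty}}\|g\|_{L^1}\leq \|g\|_{L^1}$ forces an inequality relating $q'$, $\Delta$, and the cardinalities of the nets that, after unwinding, produces the factor $\exp(C_{16}/\delta^{\beta})$ on the right-hand side of (\ref{EB0small}).

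\textbf{Main obstacle.} The crux is balancing the diagonal lower bound (which wants nets as dense as possible and as many scales as possible) against the off-diagonal interactions between the test fields. Because $\psi$ decays only polynomially, as $(1+|x|)^{-(2d-s)}$, test fields centred at neighbouring points within a single scale, or at consecutive selected scales, genuinely interact. Keeping these interactions small across all $q'$ scales at once is what produces the exponential-in-$1/\delta$ constant: denser nets (forced by small $\delta$) create larger interaction kernels, and the error accumulates multiplicatively across scales. Making this accounting precise, while preserving the hereditary nature of the $L^2$-bounds so that one may repeatedly pass to sub-measures on $E$, is the essential technical difficulty.
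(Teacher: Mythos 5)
Your proposal and the paper's proof take genuinely different routes, and unfortunately yours has a structural gap that, as written, I do not see how to close.

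The central problem is one of homogeneity. The quantity $\int R(\mu)\cdot g\,dm_d$ is linear in $g$, so there are no ``diagonal'' versus ``off-diagonal'' terms to speak of; the phrase only makes sense when you are estimating a quadratic quantity such as $\int |R^{\#}(f\mu)|^2\,d\mu$. With $g=\sum_{j,i}\varepsilon_i^j\psi_{r_{k_j}}^{x_i^j}$ you have the universal upper bound $|\int R(\mu)\cdot g\,dm_d|\le \|g\|_{L^1}\le\|\psi\|_{L^1}\sum_{j,i}r_{k_j}^s$, while (choosing all $\varepsilon_i^j=+1$ to make the terms constructive) the Fourier identity gives the lower bound $\sum_{j,i}\int\varphi((\cdot-x_i^j)/r_{k_j})\,d\mu\ge\delta\sum_{j,i}r_{k_j}^s$. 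Both sides are the same multiple of $\sum_{j,i}r_{k_j}^s$ up to absolute constants, so the comparison yields only $\delta\lesssim 1$ --- it does not see the number of scales $q'$ at all, and hence cannot produce an inequality improving as $q\to\infty$. With random signs the situation is no better: the second moment of the pairing grows linearly in the number of terms, while the sup-norm bound $\|g\|_{L^1}^2$ grows quadratically. The obstacle you identify (polynomial decay of $\psi$ causing multiplicative error accumulation across scales) is therefore not the real one; even if the cross-scale interactions vanished exactly, the $L^1$--$L^\infty$ pairing would be scale-blind. In essence, pairing $R(\mu)$ against a single-scale bump already exhausts the information in $\|R(\mu)\|_{L^\infty}\le 1$ in the form of the growth estimate (Lemma \ref{neccond}); superposing bumps over many scales adds nothing.

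The paper's proof is entirely different and is geometric and pointwise rather than dual. It proceeds by an alternative (Lemma \ref{bottomalt}): either $\mu(E_\delta^q(B_0))$ already satisfies a bound of the form $\frac{C 2^{sn}n}{q\lambda}\mu(B_0)$ --- where the $1/q$ comes from stacking $\approx q/n$ families of pairwise disjoint ``mediocre'' balls, not from a pigeonhole on densities --- or there is a single ball $B$ centred in $E(B_0)$ containing no mediocre ball of a fixed relative size. In the latter case, a pigeonhole inside $B$ (Lemma \ref{larlighball}) locates a comparatively large ball $D$, disjoint from $E(B_0)$, whose boundary touches $\overline{E(B_0)}$ at a point $z$. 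Near $z$ the measure is forced to be heavily one-sided: the cone estimate of Lemma \ref{topconebig} shows that over $\log(A/8)$ dyadic scales most of the mass lies in a fixed cone pointing away from $D$. Integrating the radial kernel over that cone (Corollary \ref{auxrieszbig}) gives $\int_{B(x,Ar)\setminus B(x,r)}\frac{\mathbf{n}\cdot(y-x)}{|y-x|^{1+s}}\,d\mu(y)\gtrsim \delta^\beta\log(A/8)-C_{12}$, which contradicts Cotlar's estimate $R^{\#}(\mu)\le C_2$ once $A\gtrsim\exp(C/\delta^\beta)$. This forced choice of $A$ (and hence of $n$ and $\lambda$) is what produces the $\exp(C_{16}/\delta^\beta)$ factor, not an accumulation of off-diagonal errors. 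So the correct mechanism is a \emph{pointwise} lower bound for a partial Riesz transform, not an averaged $L^1$--$L^\infty$ duality estimate; the Fourier identity $R^*(\psi m_d)=\varphi$ that you build your argument around does not actually appear in the proof of Proposition \ref{qualv} at all (it is used only later, in the $\Psi$-function construction of Section \ref{prop3sec}).
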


 The proof below yields the value $\beta = \frac{s-d+2}{s-d+1}$. The rest of this section is devoted to the proof of Proposition \ref{qualv}, and hence we will suppose that the condition $||R(\mu)||_{L^{\infty}}\leq 1$ is in force.  Assume that $E_{\delta}^q(B_0)\neq\varnothing$, since otherwise there is nothing to prove.  We will often suppress the dependence on $q$ and $\delta$ in $E_{\delta}^q(B_0)$ and write $E(B_0)$.


\subsection{An alternative}\label{altsec}   Fix a small positive number $\lambda = \lambda(\delta, d, s)\leq \delta$ to be chosen later. We begin with a simple auxiliary lemma.

\begin{lem}\label{nanres}  There exists a constant $c_6$, such that for any ball $B(x,t)$ with $\mu(B(x,t))\geq \lambda t^s$, we have
\begin{equation}\label{ballstillbig}
\mu(B(x,t(1-c_6\lambda^{\frac{1}{s+1-d}})))\geq \frac{\lambda}{2}t^s.
\end{equation}
\end{lem}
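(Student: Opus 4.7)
The plan is to show that the ``thin annulus'' $A = B(x,t)\setminus B(x,t(1-\eta))$ can absorb only a small fraction of the mass $\lambda t^s$, provided $\eta = c_6\lambda^{1/(s+1-d)}$ is chosen appropriately. Since $\mu(B(x,t(1-\eta))) = \mu(B(x,t)) - \mu(A)$, it is enough to establish that $\mu(A)\leq \tfrac{\lambda}{2}t^s$; the claim then follows from the hypothesis $\mu(B(x,t))\geq \lambda t^s$.

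To estimate $\mu(A)$, I would set $\rho = \eta t$ and cover $A$ by balls of radius $\rho$ with centers on (say) the sphere $\{|y-x| = t-\rho/2\}$: a standard $\rho/2$-net on this sphere uses at most $C(t/\rho)^{d-1}$ points, and the corresponding balls of radius $\rho$ cover the annulus of thickness $\rho$ that contains $A$. Now Lemma~\ref{neccond} (which is available since $\|R(\mu)\|_{L^\infty}\leq 1$) provides the growth bound $\mu(B)\leq C_1 \rho^s$ for each of these balls.

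Combining these two ingredients gives
\begin{equation}\nonumber
\mu(A)\;\leq\; C\Bigl(\frac{t}{\rho}\Bigl)^{d-1}\!\cdot C_1\rho^{s}\;=\;C\,C_1\,t^{d-1}\rho^{s-d+1}.
\end{equation}
Substituting $\rho = c_6\lambda^{1/(s+1-d)} t$ and using that $s+1-d>0$, this becomes $C\,C_1\,c_6^{s+1-d}\,t^{s}\,\lambda$. Choosing $c_6$ so small (depending only on $s$ and $d$) that $C\,C_1\,c_6^{s+1-d}\leq \tfrac{1}{2}$ yields $\mu(A)\leq \tfrac{\lambda}{2}t^s$, as required. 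The assumption $\lambda\leq\delta<1$ guarantees that $\eta<1$, so the smaller ball is nontrivial.

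The only delicate point is the geometry of the covering: one has to verify that $A$ really is contained in the union of the chosen balls. This is the step I would write out carefully, but it is a standard $\rho$-net construction on a sphere and presents no serious obstacle. The exponent $\tfrac{1}{s+1-d}$ in the statement is forced by the balancing of $\rho^{s-d+1}$ against $\lambda t^{s-d+1}$, which is the whole source of the restriction $s>d-1$ being used.
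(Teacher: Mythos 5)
Your proof is correct and follows essentially the same route as the paper: cover the annulus $B(x,t)\setminus B(x,(1-\theta)t)$ by roughly $\theta^{-(d-1)}$ balls of radius $\theta t$, apply the growth bound from Lemma~\ref{neccond} to each, and balance $\theta^{s+1-d}$ against $\lambda$ to pick $c_6$. The extra geometric detail you flag about centering the covering balls on a sphere is exactly the standard net argument the paper leaves implicit.
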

\begin{proof}For $0<\theta<1/2$, the annulus $B(x,t)\backslash B(x,(1-\theta)t)$ can be covered with $C\theta^{-(d-1)}$ balls of radius $\theta t$.  It follows from the growth condition (\ref{growth})  that
$$\mu(B(x,t)\backslash B(x,(1-\theta)t)) \leq C\theta^{1-d}\cdot C_1(\theta t)^s = CC_1 \theta^{s+1-d} t^s.
$$
Consequently, $\mu(B(x,(1-\theta)t))\geq \lambda t^s/2$, provided $CC_1\theta^{s+1-d}\leq \lambda/2$.  This is satisfied with $\theta = c_6\lambda^{\frac{1}{s+1-d}}$ as long as $c_6\leq(2CC_1)^{-\frac{1}{s+1-d}}$.
\end{proof}

Before the alternative is stated, let us identify our enemy: \textit{mediocre balls}.  These are stray balls of significant measure which are located away from $E_{\delta}^q(B_0)$.

\begin{defn} A ball $B(x,r)\subset B_0$ is called mediocre if $$B(x,r)\cap E_{\delta}^q(B_0) =\varnothing, \text{ and }\mu(B(x,r))>\lambda r^s.$$\end{defn}

Fix an integer $n$, $n\geq 2$, satisfying 
 \begin{equation}\label{nlambdacond}2^{-n}\leq c_6\lambda^{\frac{1}{s+1-d}}.\end{equation}

The alternative below states that in order for the set $E_{\delta}^q(B_0)$ not to have small measure, there must exist a ball $B\subset B_0$ of radius $r_1\geq 2^{n-q}r_0$ that does not contain a mediocre ball of radius $2^{-n}r_1$.

\begin{lem}\label{bottomalt}There exists a constant $C_8>1$ such that one of the following two statements holds.  Either

\indent (i) The measure of $E(B_0)$ is small, i.e.,
\begin{equation}\label{EB0small}\mu(E(B_0)) \leq \displaystyle\frac{C_82^{sn}n}{q\lambda}\mu(B_0),\end{equation}
or

\indent (ii) There exists a ball $B\subset B_0$ of radius $r_1\geq 2^{n-q}r_0$, centred at a point of $E(B_0)$, such that $B$ does not contain any mediocre balls of radius $2^{-n}r_1$.
\end{lem}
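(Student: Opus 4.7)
The plan is to prove the contrapositive: assuming (ii) fails, I will establish (i). So suppose that every ball $B\subset B_0$ of radius $r_1\geq 2^{n-q}r_0$ centred at a point of $E(B_0)$ contains a mediocre sub-ball of radius $2^{-n}r_1$.

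Fix $K\sim q/n$ and introduce geometric scales $\rho_k = r_0\cdot 2^{-kn-2}$, $k=0,1,\dots,K-1$, each chosen so that $\rho_k$ lies in the range of radii for which the assumed failure of (ii) applies. At each scale $k$, I would extract mediocre balls as follows. Apply the Vitali covering lemma to the family $\{B(x,\rho_k):x\in E(B_0)\}$ to obtain a pairwise disjoint subfamily $\{B(x_\alpha^k,\rho_k)\}_\alpha$ whose threefold dilates cover $E(B_0)$. The growth estimate of Lemma~\ref{neccond} then forces $|\{\alpha\}|\gtrsim \mu(E(B_0))/\rho_k^s$. By the failure of (ii), inside each such $B(x_\alpha^k,\rho_k)$ one finds a mediocre sub-ball $M_\alpha^k$ of radius $\rho_{k+1}=2^{-n}\rho_k$, contributing mass at least $\lambda\rho_{k+1}^s$. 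Since the parents are disjoint, so are the $M_\alpha^k$ at scale $k$, and their combined scale-$k$ mass is at least $c\lambda\mu(E(B_0))/2^{sn}$.

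Summing over the $K$ scales gives a collection whose cumulative $\mu$-mass is at least $cK\lambda\mu(E(B_0))/2^{sn}$. If this cumulative mass can be bounded above by a fixed multiple of $\mu(B_0)$, then rearranging yields $\mu(E(B_0))\lesssim 2^{sn}\mu(B_0)/(K\lambda)=2^{sn}n\mu(B_0)/(q\lambda)$, which is precisely (i) after absorbing constants into $C_8$.

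The main obstacle is the disjointness of the extracted mediocre balls \emph{across} scales: a scale-$(k+1)$ sub-ball can easily sit inside a scale-$k$ one, and naively the cumulative mass is only bounded by $K\mu(B_0)$, which erases exactly the factor of $K$ that is needed in (i). To overcome this, I would invoke Lemma~\ref{nanres} together with the standing hypothesis $2^{-n}\leq c_6\lambda^{1/(s+1-d)}=:\sigma$: every mediocre ball $B(z,\rho_{k+1})$ contains a ``core'' $B(z,(1-\sigma)\rho_{k+1})$ of mass at least $\lambda\rho_{k+1}^s/2$ lying at distance at least $\sigma\rho_{k+1}$ from $E(B_0)$. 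Running the extraction greedily from the largest scale down and restricting at each scale to those $x\in E(B_0)$ whose $\rho_k$-neighbourhood misses every previously extracted core, I obtain globally disjoint cores. The mass of $x$'s excluded by this restriction is controlled by the growth bound applied to the $\rho_k$-enlargements of the previously chosen cores, combined with the total-mass bound on each previous extraction; verifying that this controlled loss stays a bounded fraction of $\mu(E(B_0))$ is the technical crux, after which the target estimate follows by the disjoint-collection argument above.
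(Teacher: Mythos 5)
Your proposal follows the same skeleton as the paper's argument: assume (ii) fails, at each of roughly $q/n$ scales cover $E(B_0)$ with balls centred on $E(B_0)$, extract a mediocre sub-ball of relative radius $2^{-n}$ from each, apply Lemma~\ref{nanres} to obtain contracted ``cores'' of mass $\gtrsim \lambda(2^{-n}r)^s$, sum the mass across scales, and compare against $\mu(B_0)$. The bookkeeping that gives a combined scale-$k$ mass of order $\lambda\,2^{-sn}\mu(E(B_0))$ is correct (the paper uses an $r$-net with covering multiplicity $C_7$ rather than a Vitali disjoint subfamily, but the two are interchangeable here).

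However, there is a genuine gap at exactly the point you flag as the ``technical crux,'' and the greedy patch you sketch is not carried out. You correctly observe one half of the needed geometry: by Lemma~\ref{nanres} and the standing hypothesis $2^{-n}\leq c_6\lambda^{1/(s+1-d)}$, the scale-$k$ cores lie at distance at least $2^{-2n}\rho_k$ from $E(B_0)$. What you miss is the complementary half, which makes any greedy pruning unnecessary: every core produced at scale $\ell$ is contained in a ball of radius $\rho_\ell$ \emph{centred on} $E(B_0)$, hence lies within distance $\rho_\ell$ of $E(B_0)$. Consequently the scale-$k$ cores (far from $E(B_0)$, at distance $\geq 2^{-2n}\rho_k$) are automatically disjoint from the scale-$\ell$ cores (close to $E(B_0)$, within distance $\rho_\ell$) as soon as $\rho_\ell \leq 2^{-2n}\rho_k$. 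This is why the paper chooses the scales spaced by a factor of $2^{-2n}$, namely $r = 2^{-2kn-2}r_0$, giving $\lfloor q/2n\rfloor$ pairwise non-overlapping collections and hence
$$\frac{q\lambda}{2n\,C_1C_7 2^{sn+1}}\,\mu(E(B_0))\leq \mu(B_0),$$
which is (i). Your choice $\rho_k = r_0 2^{-kn-2}$ (spacing $2^{-n}$) leaves room for adjacent-scale overlap, which is precisely the obstacle you then try to handle greedily; replacing it with the $2^{-2n}$ spacing and invoking the two-sided distance argument dissolves the issue entirely and closes the proof. As written, the verification that the greedy restriction loses only a bounded fraction of $\mu(E(B_0))$ is the missing step, and you should not expect it to be straightforward: without the ``close to $E(B_0)$'' observation, the enlargements of previously chosen cores could, a priori, swallow most of $E(B_0)$ at later scales.
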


\begin{proof}Statement (i) in the alternative of the lemma is trivially true unless $q> 2^{sn}n/\lambda$, so we will assume this condition on $q$ is in force.

Suppose (ii) does not hold.  We will iteratively find many disjoint portions of $B_0$ whose measures are comparable to $\mu(E(B_0))$.  To present the main step, fix $r\in [r_0 2^{n-q},r_0/4]$.  Using an $r$-net in $E(B_0)$\footnote{Pick $z_1\in E(B_0)$, and let $B_1=B(z_1,r)$.  Given $B_1,\dots, B_k$, choose $z_{k+1}\in E(B_0)\backslash \cup_{j=1}^kB_j$ and let $B_{k+1} = B(z_{k+1}, r)$.  Repeat this process until the bounded set $E(B_0)$ is covered by the balls $B_j$.}, we find a finite collection $B_j = B(z_j, r)$ of balls with a covering number of at most $C_7$, such that $E(B_0)\subset \cup_jB_j$ and $z_j\in E(B_0)$ for all $j$.

By the assumption, within each ball $B_j$ there is a mediocre ball $D_j\subset B_j$ of radius $2^{-n}r$.  From condition (\ref{nlambdacond}) and Lemma \ref{nanres}, it follows that the contracted ball $\widetilde{D}_j = (1-2^{-n})D_j$ satisfies
$$\mu(\widetilde{D}_j)\geq \frac{\lambda}{2}(2^{-n}r)^s.
$$
The virtue of the collection of balls $\widetilde{D}_j$ is that they are well separated from $E(B_0)$.  Indeed, since $D_j\cap E(B_0)=\varnothing$, we have that $\operatorname{dist}(\widetilde{D}_j, E(B_0))\geq 2^{-2n}r$ for all $j$.
Now, note that
$$\mu\Bigl(\bigcup_j \widetilde{D}_j\Bigl) \geq\frac{1}{C_7}\sum_j \mu(\widetilde{D}_j) \geq \frac{\lambda}{C_72^{ns+1}}\sum_j r^s\geq \frac{\lambda}{C_1C_72^{ns+1}}\sum_j \mu(B_j),
$$
where in the last inequality the growth estimate for $\mu$ has been used.  Since $E(B_0)\subset \cup_j B_j$, we achieve the estimate
\begin{equation}\label{tildeDmeas}
\mu\Bigl(\bigcup_j \widetilde{D}_j\Bigl)\geq \frac{\lambda}{C_1C_72^{ns+1}}\mu(E(B_0)).
\end{equation}

For the iteration, employ the above argument with $r=2^{-2kn-2}r_0$ for $0\leq k \leq \lfloor(q-2n)/2n\rfloor$.  This yields collections of balls $\widetilde{D}_j^{k}$ disjoint from $E(B_0)$ and satisfying (\ref{tildeDmeas}) for each $k$.  Furthermore, the collections $\{\widetilde{D}_j^k\}_j$ do not overlap:
$$\Bigl[\bigcup_i\widetilde{D}_i^k\Bigl] \cap \Bigl[\bigcup_j \widetilde{D}_j^{\ell}\Bigl]=\varnothing \text{ for }k< \ell.$$
To see this, note that for any $j$, the ball $\widetilde{D}_j^{\ell}$ is contained in a ball of radius $2^{-2\ell n-2}r_0$ centred at a point of $E(B_0)$; and for each $i$, we have $\operatorname{dist}(\widetilde{D}_i^k, E(B_0))\geq 2^{-2(k+1)n-2}r_0 $ by the separation property.  There are $\lfloor q/2n \rfloor$ non-overlapping collections $\{\widetilde{D}_j^k\}_j$, each contained in $B_0$ and disjoint from $E(B_0)$. Hence
$$\frac{q\lambda}{2nC_1C_72^{sn+1}}\mu(E(B_0))\leq \mu(B_0).
$$
We conclude that part (i) of the alternative holds.
\end{proof}

The aim is now to show that the second part of the alternative is incompatible with the condition $||R(\mu)||_{L^{\infty}}\leq 1$.  Once this is established, Proposition \ref{qualv} will follow without difficulty.  Let us henceforth assume that part (ii) of the alternative in Lemma \ref{bottomalt} holds. To assert that this assumption results in the blow up of the Riesz transform, we start with finding a large ball of small measure whose boundary intersects $\overline{E(B_0)}$.

It will be convenient to denote $r = 2^{-n}r_1$, where $r_1$ is the radius of the ball $B$ from part (ii) of the alternative in Lemma \ref{bottomalt}.

\begin{lem}\label{larlighball}  There exists a positive constant $c_9$ such that if $n$ satisfies $c_9(2^{n(d-s)}\delta)^{1/d}>1$, then there exists a ball $D\subset \frac{1}{2}B$ with the following properties:

(i)  $D$ has radius $R = c_{9}(2^{n(d-s)}\delta)^{1/d}r$,

(ii) $D\cap E(B_0)=\varnothing$, and

(iii) there exists $z \in \overline{E(B_0)}\cap \partial D$.

\end{lem}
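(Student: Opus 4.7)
The plan is to first produce \emph{some} ball of radius $R$ inside $\tfrac{1}{2}B$ that is disjoint from $E(B_0)$ (by a density/counting argument at scale $r$), and then continuously translate this ball along a segment toward the center $x_0$ of $B$ (which lies in $E(B_0)$ by hypothesis (ii) of Lemma \ref{bottomalt}) until its boundary first touches $\overline{E(B_0)}$. Properties (i)--(iii) will then follow automatically.

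\textbf{Step 1 (an empty ball exists).} Since $r=2^{-n}r_1\in[r_0/2^q,\,r_0/4]$, the definition of $E_{\delta}^q(B_0)$ yields $\mu(B(y,r))>\delta r^s$ for every $y\in E(B_0)$. I argue by contradiction: assume that every $w$ with $|w-x_0|\leq r_1/2-R$ has $B(w,R)\cap E(B_0)\neq\varnothing$. Extract a maximal $3R$-separated subset $\{y_j\}_{j=1}^N$ of the set $\{w:|w-x_0|\leq r_1/2-R\}$; a standard packing estimate gives $N\geq c(r_1/R)^d$. For each $j$, pick $x_j\in E(B_0)$ with $|x_j-y_j|<R$. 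The points $\{x_j\}$ are then pairwise $R$-separated and lie inside $\tfrac12 B$, while the balls $B(x_j,r)$ sit inside $B$ (since $r\leq r_1/2$) and have bounded overlap (since $R>r$, which follows from the lemma's hypothesis). Summing the density lower bound and invoking Lemma \ref{neccond} applied to $B$ gives
\[
c\,(r_1/R)^d\,\delta\,r^s \;\leq\; \sum_j \mu(B(x_j,r)) \;\lesssim\; \mu(B) \;\leq\; C_1\,r_1^s.
\]
Substituting $r_1=2^n r$ rearranges to $R\gtrsim (2^{n(d-s)}\delta)^{1/d}\,r$. Choosing $c_9$ smaller than this implicit constant contradicts the definition $R=c_9(2^{n(d-s)}\delta)^{1/d}r$, producing the desired center $w_0$.

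\textbf{Step 2 (sliding to contact).} Consider the path $w_t=(1-t)w_0+tx_0$, $t\in[0,1]$. Both $w_0$ and $x_0$ lie in the convex ball $\{w:|w-x_0|\leq r_1/2-R\}$, so $w_t$ does too, and hence $B(w_t,R)\subset \tfrac12 B$ for every $t$. Define
\[
t^{\ast}:=\inf\bigl\{\,t\in[0,1]\,:\,\overline{B(w_t,R)}\cap \overline{E(B_0)}\neq\varnothing\,\bigr\}.
\]
Since $x_0\in E(B_0)$, this infimum is at most $1$, and continuity of $t\mapsto \operatorname{dist}(w_t,\overline{E(B_0)})$ forces $\operatorname{dist}(w_{t^{\ast}},\overline{E(B_0)})=R$. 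Setting $D:=B(w_{t^{\ast}},R)$, any $z\in\overline{E(B_0)}$ realising this distance satisfies $|z-w_{t^{\ast}}|=R$ and hence lies in $\partial D$, while the open ball $D$ is disjoint from $\overline{E(B_0)}$, a fortiori from $E(B_0)$. Properties (i)--(iii) are verified.

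The main obstacle is the counting estimate in Step 1: the sharp exponent $1/d$ in $(2^{n(d-s)}\delta)^{1/d}$ (rather than the weaker $\delta^{1/(d-s)}$ that one would obtain from a pure growth/density balance at scale $r_1$) arises from applying the density lower bound at the \emph{small} scale $r$ and from the nearly-disjointness of the balls $B(x_j,r)$ centered at the $R$-separated points $x_j$. The hypothesis $c_9(2^{n(d-s)}\delta)^{1/d}>1$ serves precisely to guarantee $R>r$, so that this near-disjointness is valid. The ``no mediocre balls'' condition from Lemma \ref{bottomalt}(ii) plays no role in this lemma; it will enter only in the subsequent step that uses $D$ to contradict $\|R(\mu)\|_{L^{\infty}}\leq 1$.
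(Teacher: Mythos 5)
Your proof is correct and takes essentially the same approach as the paper's: a pigeonhole/packing argument at scale $R$ played against the $\delta$-density lower bound at the smaller scale $r$ (valid because $R>r$ under the hypothesis) produces one empty ball, and then a continuous translation of that ball toward the center $x_0\in E(B_0)$ until first contact gives the touching point on $\overline{E(B_0)}$. The only cosmetic difference is in how the packing is phrased -- the paper packs a $2\rho$-separated family of balls $D_j$ of radius $\rho=a(2^{n(d-s)}\delta)^{1/d}r$ and uses disjointness of their $r$-neighborhoods $\widetilde D_j$, while you pack a $3R$-separated net of points and invoke bounded overlap of the density balls $B(x_j,r)$ -- but the counting and the resulting contradiction with $\mu(B)\le C_1 r_1^s$ are the same.
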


\begin{proof}  The existence of the ball follows from the pigeonhole principle.  Indeed, for a constant $a\in(0,\frac{1}{2}]$ to be chosen momentarily, consider a disjoint packing of balls $D_j$ with radius $a(2^{n(d-s)}\delta)^{1/d} r$ into the ball $\frac{1}{2} B$, such that $\operatorname{dist}(D_i, D_j)> 2a(2^{n(d-s)}\delta)^{1/d} r$ for all $i\neq j$.  One can pack at least $\frac{c2^{ns}}{a^d\delta}$ such balls into $\frac{1}{2} B$. (Note that $a(2^{n(d-s)}\delta)^{1/d}r< \tfrac{1}{2}2^n r=r_1/2$.)

Let $\widetilde{D}_j$ be the $r$  neighbourhood of $D_j$.  Assume that $a(2^{n(d-s)}\delta)^{1/d}\!>\!1$, and each ball $D_j$ intersects $E(B_0)$.  Then for every $j$, we have $\widetilde{D}_j\subset B$ and $\mu(\widetilde{D}_j)>\delta r^s$.  Furthermore, the condition $a(2^{n(d-s)}\delta)^{1/d}\!>\!1$ ensures that the open balls $\widetilde{D}_j$ are pairwise disjoint.

We are now in a position to derive a contradiction.  Indeed, the observations above yield the following chain of inequalities:
$$\mu(B)\geq \sum_j \mu(\widetilde{D}_j)\geq \delta r^s\cdot \frac{c2^{ns}}{a^d \delta}= \frac{c r_1^s}{a^d}.
$$
Now choose $a = \min\bigl(\frac{1}{2}, \bigl(\frac{c}{C_1+1}\bigl)^{1/d}\bigl)$.  With this choice of $a$, the right hand side of the expression above is greater than $C_1 r_1^s$, which is in contradiction with the growth estimate (\ref{growth}).  As a result, one of the balls $D_j$ does not intersect $E(B_0)$.  We can now put $c_9=a$, and arrive at a ball $D$ satisfying (i) and (ii), provided $c_9(2^{n(d-s)}\delta)^{1/d}>1$.

It remains to translate $D$ so that (iii) holds.  To this end, recall that the centre of $B$ lies in $E(B_0)$. Therefore, one may move the ball $D$ towards the centre of $B$, until its boundary touches $\overline{E(B_0)}$ at some point $z$.%
\end{proof}

Note that each ball $B(y,r)\subset D$ has measure $\mu(B(y,r))\leq \lambda r^s$.  This is because the ball $D$ contains no mediocre balls, and does not intersect $E(B_0)$.



\subsection{A measure estimate}\label{auxmeassec}  We shall now state and prove an elementary lemma, which will enable us to exhibit the blow up of the maximal Riesz transform.

\begin{figure}[t]\label{ballpic}
\centering
 \includegraphics[trim=60 62 60 1, clip, width = 120mm]{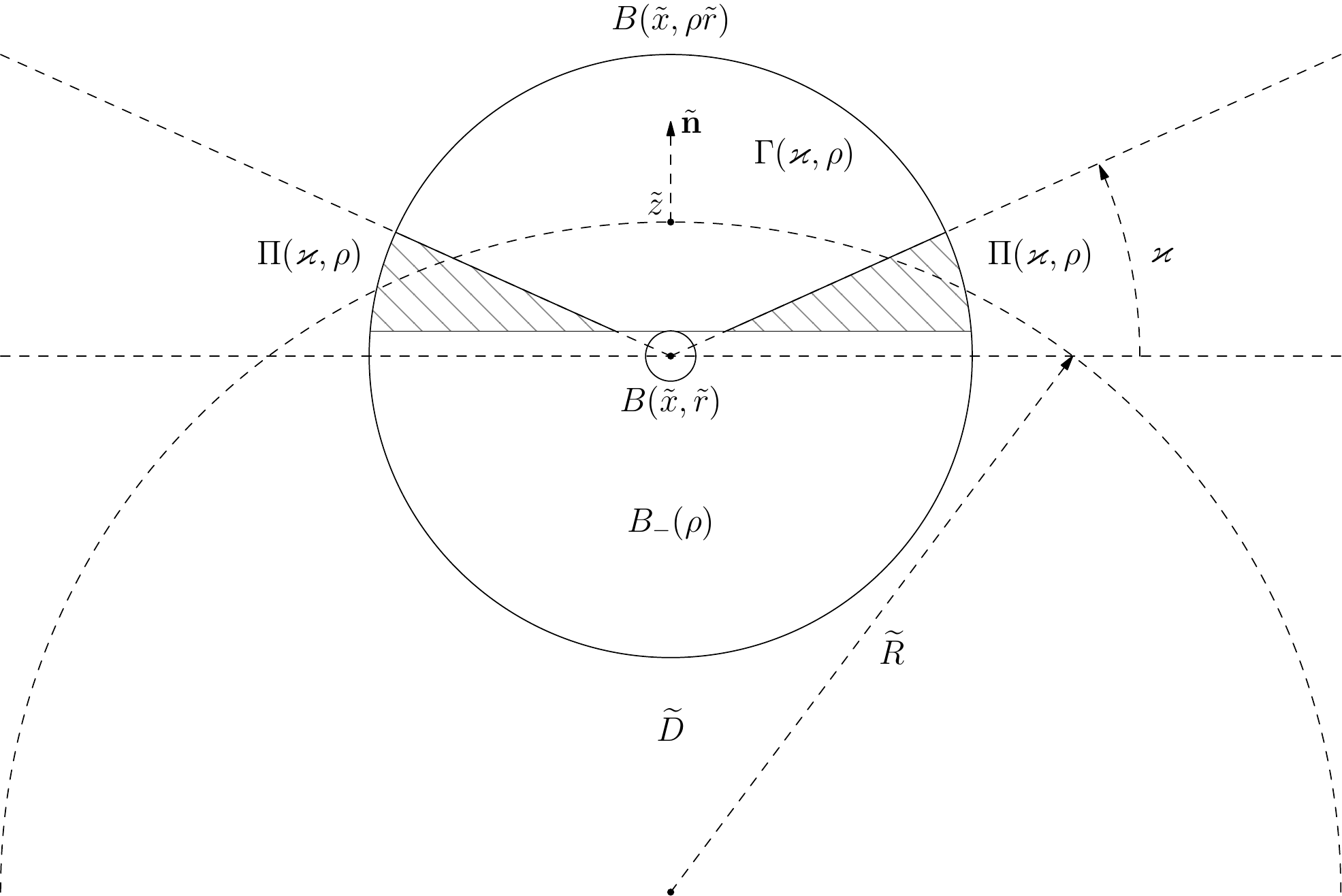}
\caption[Lemma \ref{topconebig} set-up.]
 {The set-up for Lemma \ref{topconebig}.  The angle $\varkappa$ will be chosen equal to $c_{10}\delta^{\frac{1}{s-d+1}}$.}
 \end{figure}

Let $\tilde{r}>0$ and let $\widetilde{R}>64\tilde{r}$.  Suppose that $\widetilde{D}$ is a ball of radius $\widetilde{R}$, with the property that every ball $B(y,\tilde{r})\subset \widetilde{D}$ has measure $\mu(B(y, \tilde{r}))\leq \lambda \tilde{r}^s$.  Let $\tilde{z}\in \partial \widetilde{D}$ be such that
\begin{equation}\label{tildedens}\mu(B(\tilde{z},t))\geq \delta t^s, \text{ for all }t\in (\tilde{r}, (\widetilde{R}\tilde{r})^{1/2}).\end{equation}  Finally, define $\tilde{x} = \tilde{z} - 4\tilde{r}\tilde{\mathbf{n}}$, where $\tilde{\mathbf{n}}$ is the outward unit normal to $\partial\widetilde{D}$ at $\tilde{z}$, see Figure 1 below.

\begin{lem}\label{topconebig}There exist positive constants $c_{10}$ and $c_{11}$ such that if $\rho$ and $\lambda$ satisfy
$\rho\in \bigl(8, (\widetilde{R}/\tilde{r})^{1/2}\bigl)$ and $\lambda \leq c_{11}\delta \rho^{s-d},$
then
\begin{equation}\label{conelarge}\nonumber
\mu\bigl(\Gamma(c_{10}\delta^{\frac{1}{s-d+1}}, \rho)\bigl)\geq \frac{\delta}{2^{s+1}}(\rho \tilde{r})^s,
\end{equation}
where, for $\varkappa>0$, $$\Gamma(\varkappa,\rho) = \{y\in  B(\tilde{x}, \rho \tilde{r}):\tilde{\textbf{n}}\cdot (y- \tilde{x}) \geq \max(\varkappa|y-\tilde{x}|, \tilde{r}) \}.$$
\end{lem}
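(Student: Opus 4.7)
The plan is to establish a lower bound on $\mu(\Gamma(\varkappa,\rho))$ by combining a lower bound on $\mu(B(\tilde{x},\rho\tilde{r}))$ coming from the density condition at $\tilde{z}$ with an upper bound on the complement $S:=B(\tilde{x},\rho\tilde{r})\setminus \Gamma(\varkappa,\rho)$ coming from the light-ball hypothesis inside $\widetilde{D}$.

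For the lower bound, the hypothesis $\rho\geq 8$ gives $|\tilde{x}-\tilde{z}|=4\tilde{r}\leq \rho\tilde{r}/2$, so $B(\tilde{z},\rho\tilde{r}/2)\subset B(\tilde{x},\rho\tilde{r})$. The choice $\rho\in (8,(\widetilde{R}/\tilde{r})^{1/2})$ puts $\rho\tilde{r}/2 \in (\tilde{r},(\widetilde{R}\tilde{r})^{1/2})$, the range in which \eqref{tildedens} applies, giving
\begin{equation*}
\mu(B(\tilde{x},\rho\tilde{r})) \geq \mu(B(\tilde{z},\rho\tilde{r}/2)) \geq \frac{\delta(\rho\tilde{r})^s}{2^s}.
\end{equation*}

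For the upper bound on $\mu(S)$, I would prove first that every $y\in S$ satisfies $B(y,\tilde{r})\subset \widetilde{D}$. Writing $c_{\widetilde{D}}=\tilde{x}-(\widetilde{R}-4\tilde{r})\tilde{\mathbf{n}}$ for the center of $\widetilde{D}$, expand
\begin{equation*}
|y-c_{\widetilde{D}}|^2 = |y-\tilde{x}|^2 + 2(\widetilde{R}-4\tilde{r})\,\tilde{\mathbf{n}}\cdot(y-\tilde{x}) + (\widetilde{R}-4\tilde{r})^2.
\end{equation*}
Using the defining bound $\tilde{\mathbf{n}}\cdot(y-\tilde{x})<\max(\varkappa|y-\tilde{x}|,\tilde{r})$ for $y\in S$, the trivial $|y-\tilde{x}|\leq \rho\tilde{r}$, and the two scale inputs $\widetilde{R}\geq 64\tilde{r}$ and $\rho^2\tilde{r}\leq \widetilde{R}$, one arrives at $|y-c_{\widetilde{D}}|\leq \widetilde{R}-\tilde{r}$ provided $\varkappa\rho$ is at most an absolute constant of order one; the coefficient $c_{10}$ in $\varkappa=c_{10}\delta^{1/(s-d+1)}$ is chosen small enough to enforce this. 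Consequently $B(y,\tilde{r})\subset\widetilde{D}$, and the light-ball hypothesis delivers $\mu(B(y,\tilde{r}))\leq \lambda\tilde{r}^s$ for every $y\in S$. A standard Vitali-type covering then produces $\{B(y_i,\tilde{r})\}_{i=1}^N$ with $y_i\in S$, bounded multiplicity, and $N\leq C\rho^d$ (by volume comparison against $B(\tilde{x},2\rho\tilde{r})$); summing and substituting the hypothesis $\lambda\leq c_{11}\delta\rho^{s-d}$ yields
\begin{equation*}
\mu(S)\leq C\lambda\rho^d\tilde{r}^s\leq Cc_{11}\delta(\rho\tilde{r})^s.
\end{equation*}
Picking $c_{11}$ so that $Cc_{11}\leq 2^{-(s+2)}$ gives $\mu(S)\leq \delta(\rho\tilde{r})^s/2^{s+2}$. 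Subtraction then concludes
\begin{equation*}
\mu(\Gamma(\varkappa,\rho))\geq \mu(B(\tilde{x},\rho\tilde{r}))-\mu(S)\geq \frac{\delta(\rho\tilde{r})^s}{2^s}-\frac{\delta(\rho\tilde{r})^s}{2^{s+2}}\geq \frac{\delta(\rho\tilde{r})^s}{2^{s+1}}.
\end{equation*}

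The main obstacle is the geometric containment at the heart of the argument: showing that every $y\in S$ lies at distance at least $\tilde{r}$ from $\partial\widetilde{D}$, so that the light-ball hypothesis applies uniformly to all balls centered in $S$. It is here that the scale gap $\widetilde{R}>64\tilde{r}$ and the upper constraint $\rho\leq (\widetilde{R}/\tilde{r})^{1/2}$ are both used in an essential way, and here too that the specific dependence $\varkappa=c_{10}\delta^{1/(s-d+1)}$ must be tuned against the quadratic inequality in $|y-c_{\widetilde{D}}|$ that results. Once that geometric step is secured, the remainder reduces to volume counting balanced against the two smallness hypotheses on $\lambda$ and on $\rho$.
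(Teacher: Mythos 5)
Your containment claim is where the argument breaks: you assert that every $y\in S = B(\tilde{x},\rho\tilde{r})\setminus\Gamma(\varkappa,\rho)$ satisfies $B(y,\tilde{r})\subset\widetilde{D}$, and you note yourself that the quadratic inequality forces the constraint $\varkappa\rho \lesssim 1$. But the lemma must hold for all $\rho$ up to $(\widetilde{R}/\tilde{r})^{1/2}$, which is completely unrelated to $\varkappa = c_{10}\delta^{1/(s-d+1)}$. In the eventual application (Corollary~\ref{auxrieszbig} and its use in Proposition~\ref{qualv}), $\rho$ is taken as large as $A = \exp(C_{13}/\delta^{\beta})$, while $1/\varkappa = c_{10}^{-1}\delta^{-1/(s-d+1)}$ is only polynomially large in $1/\delta$; so $\varkappa\rho \to \infty$, and no choice of $c_{10}$ can save the inclusion. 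Concretely, a point $y$ with $\tilde{\mathbf{n}}\cdot(y-\tilde{x})$ close to $\varkappa\rho\tilde{r}$ can lie far above the cap of $\widetilde{D}$ once $\varkappa\rho$ exceeds an absolute constant, so the light-ball hypothesis simply does not reach it, and your bound $\mu(S)\leq C\lambda\rho^d\tilde{r}^s$ is unjustified.

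The paper avoids this by splitting $S$ into the slab $B_{-}(\rho) = \{y\in B(\tilde{x},\rho\tilde{r}):\tilde{\mathbf{n}}\cdot(y-\tilde{x})\leq\tilde{r}\}$ and the thin wedge $\Pi(\varkappa,\rho) = \{y\in B(\tilde{x},\rho\tilde{r}): \tilde{r} < \tilde{\mathbf{n}}\cdot(y-\tilde{x}) < \varkappa|y-\tilde{x}|\}$, and estimating them by different mechanisms. On $B_{-}(\rho)$, the inclusion $B(y,\tilde{r})\subset\widetilde{D}$ does hold for all admissible $\rho$ (the inequality $\sqrt{(\widetilde{R}-3\tilde{r})^2+\widetilde{R}\tilde{r}}<\widetilde{R}-\tilde{r}$ suffices, with no constraint on $\varkappa\rho$), so the light-ball hypothesis plus a $C\rho^d$-ball cover yields $\mu(B_{-}(\rho))\leq C\rho^d\lambda\tilde{r}^s$, small under $\lambda\leq c_{11}\delta\rho^{s-d}$. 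On $\Pi(\varkappa,\rho)$ one does \emph{not} use the light-ball hypothesis at all; instead one covers $\Pi$ by $C\varkappa^{-(d-1)}$ balls of the $\varkappa$-dependent radius $\varkappa\rho\tilde{r}$ and invokes only the growth condition~(\ref{growth}), giving $\mu(\Pi)\leq CC_1\varkappa^{s-d+1}(\rho\tilde{r})^s$; this is small because $s>d-1$ and $\varkappa=c_{10}\delta^{1/(s-d+1)}$. This two-mechanism split is the content you are missing, and without it the estimate on the wedge part of $S$ has no source.
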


\begin{proof} 
First note that  $B(\tilde{x}, \rho \tilde{r})\supset B(\tilde{z}, \rho \tilde{r}/2)$.  By (\ref{tildedens}), we have the estimate $\mu(B(\tilde{x}, \rho \tilde{r}))\geq \delta(\rho \tilde{r})^s/2^s.$  Our goal is to show that the majority of the mass of $B(\tilde{x}, \rho \tilde{r})\backslash B(\tilde{x},\tilde{r})$ lies within the set  $\Gamma(\varkappa,\rho)$, for a suitably chosen $\varkappa>0$.

As indicated in Figure 1 above, consider the lower part
$B_{-}(\rho)= \{ y\in B(\tilde{x}, \rho \tilde{r}) : \tilde{\mathbf{n}}\cdot (y-\tilde{x})\leq \tilde{r}\}$ of the ball $B(x, \rho \tilde{r})$, and the shaded region $\Pi(\varkappa, \rho) = \{y\in B(\tilde{x}, \rho \tilde{r}): \tilde{r}< \tilde{\textbf{n}}\cdot (y-\tilde{x}) < \varkappa|y-\tilde{x}|\}$.  Note that, by definition, $B(\tilde{x}, \tilde{r})\subset B_{-}(\rho)$.

We claim that
$\mu(\Pi(\varkappa, \rho))\leq \delta(\rho \tilde{r})^s/2^{s+2}$
for $\varkappa = c_{10}\delta^{\frac{1}{s-d+1}}$, if $c_{10}>0$ is chosen small enough.  Indeed, for $\varkappa \in (0,1)$, consider a cover of $\Pi(\varkappa, \rho)$ by $C/\varkappa^{d-1}$ balls of radius $\varkappa\rho \tilde{r}$.  Applying the growth condition (\ref{growth}) to each covering ball yields $\mu(\Pi(\varkappa, \rho))\leq CC_1\frac{(\varkappa\rho \tilde{r})^s}{\varkappa^{d-1}}.$
The claim follows once we choose $c_{10} = (2^{s+2}CC_1)^{-\frac{1}{s-d+1}}.$

Now, cover the set $B_{-}(\rho)$ by $C\rho^d$ balls of radius $\tilde{r}$, such that each covering ball has its centre in $B_{-}(\rho)$.  Since $\sqrt{(\widetilde{R}-3\tilde{r})^2+ \widetilde{R}\tilde{r}}<\widetilde{R}-\tilde{r}$ (recall that $\widetilde{R}>64\tilde{r}$), each of these covering balls lies inside $\widetilde{D}$, and therefore has measure at most $\lambda \tilde{r}^s$.  Consequently, we deduce that
\begin{equation}\label{bottomhalfmeas}\mu(B_{-}(\rho))\leq C \rho^d \lambda \tilde{r}^s,\end{equation}  which is less than $\delta (\rho \tilde{r})^s/2^{s+2}$, provided $\lambda \leq c_{11}\delta \rho^{s-d}$ with $c_{11} \leq 1/(2^{s+2}C)$.


Combining these measure estimates, we obtain $$\mu(\Gamma(\varkappa,\rho))\geq \mu(B(\tilde{x}, \rho \tilde{r}))-\mu(\Pi(\varkappa, \rho)) - \mu(B_{-}(\rho)) \geq \delta (\rho \tilde{r})^s/2^{s+1},$$
for $\varkappa = c_{10}\delta^{\frac{1}{s-d+1}}$.\end{proof}

 Let us now convert the measure estimate of Lemma \ref{topconebig} into an integral estimate. Denote $\beta  = \frac{s-d+2}{s-d+1} =1+\frac{1}{s-d+1}$. We will keep the notation of the proof of Lemma \ref{topconebig}.
 For $A>1$, write
 \begin{equation}\begin{split}\label{splitup}\nonumber&\int_{B(\tilde{x}, A \tilde{r}) \backslash B(\tilde{x},\tilde{r})}\frac{\tilde{\textbf{n}}\cdot(y-\tilde{x})}{|y-\tilde{x}|^{1+s}} d\mu(y)\\
& =  \int_{\Gamma(\varkappa,A)} \dots\, d\mu(y) + \int_{\Pi(\varkappa, A)} \dots\, d\mu(y) + \int_{B_{-}(A)\backslash  B(\tilde{x}, \tilde{r})} \dots \,d\mu(y),
\end{split}\end{equation}
and denote the three integrals on the right hand side by $I$, $II$, and $III$ respectively.  First, note that by the definition of $\Gamma(\varkappa, A)$,
$$I\geq \varkappa\int_{\Gamma(\varkappa,A)}\frac{d\mu(y)}{|y-\tilde{x}|^s}\geq s\varkappa\int_{8}^{A}\frac{\mu(\Gamma(\varkappa,\rho))}{(\tilde{r}\rho)^s}\frac{d\rho}{\rho},
$$
where Fubini's theorem has been applied in the final inequality.
Now suppose that $A$ and $\lambda$ satisfy $A \in (8, (\widetilde{R}/\tilde{r})^{1/2})$ and $\lambda\leq c_{11}\delta A^{s-d}$.  Then, with $\varkappa = c_{10}\delta^{1/(s+1-d)}$, we apply Lemma \ref{topconebig} to estimate  $I\geq sc_{10}\delta^{\beta}2^{-s-1}\log(A/8)$.  The integral $II$ is nonnegative, and therefore can be ignored in deducing a lower bound.  Concerning $III$, we apply Fubini's theorem once again to estimate
\begin{equation}\begin{split}\nonumber|III| & \leq s\int_1^{\infty}\frac{\mu(B_{-}(\rho)\cap B(\tilde{x}, A\tilde{r})\backslash B(\tilde{x}, \tilde{r}))}{(\rho \tilde{r})^s}\frac{d\rho}{\rho}\\
& \leq s\int_1^A\frac{\mu(B_{-}(\rho))}{(\rho \tilde{r})^s}\frac{d\rho}{\rho} + \frac{\mu(B_{-}(A))}{(A\tilde{r})^s}.
\end{split}\end{equation}
Since $A<(\widetilde{R}/\tilde{r})^{1/2}$ and $\lambda\leq c_{11}\delta A^{s-d}$, the bound in (\ref{bottomhalfmeas}) yields $|III|\leq C\lambda A^{d-s}\leq C_{12}$.  Thus, we arrive at the following corollary.
 \begin{cor}\label{auxrieszbig}  Under the conditions of Lemma \ref{topconebig}, we have
\begin{equation}\label{Aintest}
\int_{B(\tilde{x}, A \tilde{r}) \backslash B(\tilde{x},\tilde{r})}\frac{\tilde{\textbf{n}}\cdot(y-\tilde{x})}{|y-\tilde{x}|^{1+s}} d\mu(y)\geq \frac{sc_{10}}{2^{s+1}}\delta^{\beta}\log\bigl(\frac{A}{8}) - C_{12},
\end{equation}
provided $A \in (8, (\widetilde{R}/\tilde{r})^{1/2})$ and $\lambda\leq c_{11}\delta A^{s-d}$.
\end{cor}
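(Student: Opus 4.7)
The strategy is exactly the tripartite decomposition already set up in the paragraphs preceding the statement. Writing the integral over $B(\tilde{x}, A\tilde{r})\setminus B(\tilde{x},\tilde{r})$ as $I+II+III$, with the three pieces being the restrictions to $\Gamma(\varkappa,A)$, $\Pi(\varkappa,A)$, and $B_{-}(A)\setminus B(\tilde{x},\tilde{r})$ respectively, and $\varkappa=c_{10}\delta^{1/(s-d+1)}$, it suffices to treat each piece separately.

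The heart of the matter is the lower bound on $I$. On $\Gamma(\varkappa,A)$ the definition gives $\tilde{\mathbf{n}}\cdot(y-\tilde{x})\geq \varkappa|y-\tilde{x}|$, so the integrand dominates $\varkappa|y-\tilde{x}|^{-s}$. A layer-cake rewriting via $|y-\tilde{x}|^{-s}=s\int_{|y-\tilde{x}|}^{\infty}\rho^{-s-1}d\rho$ together with Fubini converts $I$ into $s\varkappa\int_{8}^{A}\mu(\Gamma(\varkappa,\rho))(\tilde{r}\rho)^{-s}\tfrac{d\rho}{\rho}$. To invoke Lemma~\ref{topconebig} at each intermediate scale $\rho\in(8,A)$ I need both $\rho<(\widetilde{R}/\tilde{r})^{1/2}$ (immediate from $\rho<A$) and $\lambda\leq c_{11}\delta\rho^{s-d}$, which follows from the hypothesis $\lambda\leq c_{11}\delta A^{s-d}$ together with $s-d<0$ (so $\rho^{s-d}\geq A^{s-d}$ for $\rho\leq A$). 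The lemma then gives $\mu(\Gamma(\varkappa,\rho))\geq \delta(\tilde{r}\rho)^{s}/2^{s+1}$, and since $\varkappa\delta=c_{10}\delta^{\beta}$ by the very definition of $\beta$, integration in $\rho$ yields $I\geq sc_{10}\,2^{-s-1}\delta^{\beta}\log(A/8)$.

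The remaining two terms are easy. On $\Pi(\varkappa,A)$ the integrand is strictly positive, so $II\geq 0$ and can simply be dropped. On $B_{-}(A)$ one has $|\tilde{\mathbf{n}}\cdot(y-\tilde{x})|\leq\tilde{r}$, so an analogous Fubini argument combined with the measure bound (\ref{bottomhalfmeas}) yields $|III|\leq C\lambda\int_{1}^{A}\rho^{d-s-1}d\rho\lesssim \lambda A^{d-s}$, and the hypothesis $\lambda A^{d-s}\leq c_{11}\delta\leq c_{11}$ then bounds $|III|$ by a universal constant $C_{12}$. Adding the three contributions produces the claimed inequality. The only minor subtlety I expect is the uniform applicability of Lemma~\ref{topconebig} throughout $\rho\in(8,A)$, which is precisely what the monotonicity observation above takes care of; beyond that the argument is bookkeeping.
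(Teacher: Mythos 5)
Your decomposition, the estimate for $I$, the observation that $II\geq 0$, and the conclusion all coincide with the paper's argument, including the correct check that $\rho^{s-d}\geq A^{s-d}$ makes Lemma~\ref{topconebig} applicable uniformly for $\rho\in(8,A)$. The one flaw is your claim that $|\tilde{\mathbf{n}}\cdot(y-\tilde{x})|\leq\tilde{r}$ on $B_{-}(A)$: the set $B_{-}(A)$ is defined by the one-sided inequality $\tilde{\mathbf{n}}\cdot(y-\tilde{x})\leq\tilde{r}$, and the inner product can be as negative as about $-A\tilde{r}$, so the two-sided bound is false. (Had it been true, a layer-cake computation would actually give the stronger estimate $|III|\lesssim\lambda$, not $\lambda A^{d-s}$, so the bound you wrote down is inconsistent with your own premise.) The correct route, which the paper uses, is simply $\bigl|\tilde{\mathbf{n}}\cdot(y-\tilde{x})\bigr|\leq|y-\tilde{x}|$, giving $|III|\leq\int_{B_{-}(A)\setminus B(\tilde{x},\tilde{r})}|y-\tilde{x}|^{-s}\,d\mu(y)$; a Fubini in the radial variable together with (\ref{bottomhalfmeas}) then produces $|III|\leq C\lambda\int_{1}^{A}\rho^{d-s-1}\,d\rho + C\lambda A^{d-s}\lesssim\lambda A^{d-s}$, after which your use of $\lambda\leq c_{11}\delta A^{s-d}$ closes the argument exactly as you wrote. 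Once that intermediate claim is replaced, the proof is correct and identical in spirit to the paper's.
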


\subsection{The conclusion of the proof of Proposition \ref{qualv}}  We are now in a position to bring everything together.

\begin{proof}[Proof of Proposition \ref{qualv}]  Assume that part (ii) of the alternative in Lemma \ref{bottomalt} holds.  As long as $c_9(2^{n(d-s)}\delta)^{1/d}>64$, we may apply Lemma \ref{larlighball} to find a ball $D$ of radius $R=c_9(2^{n(d-s)}\delta)^{1/d}r>64r$, and a point $z\in \partial D \cap \overline{E(B_0)}$.  Define $x= z-4r\mathbf{n}$, where $\mathbf{n}$ is the outward unit normal to $\partial D$ at the point $z$.  Since $D\cap E(B_0)=\varnothing$, every ball of radius $r$ contained in $D$ has measure at most $\lambda r^s$. Consequently, the conditions introduced in the beginning of Section \ref{auxmeassec} are satisfied 
 with $\widetilde{D}=D$, $\widetilde{R}=R$, $\tilde{x}=x$, $\tilde{z}=z$ and $\tilde{r}=r$.

Now suppose $A<\sqrt{R/r} = \sqrt{c_9(2^{n(d-s)}\delta)^{1/d}}$ and $\lambda< c_{11}\delta A^{s-d}$.  Then Corollary \ref{auxrieszbig} yields
 \begin{equation}\label{Aupbd}\int_{B(x, A r) \backslash B(x, r)}\frac{\textbf{n}\cdot(y-x)}{|y-x|^{1+s}}d\mu(y)\geq \frac{sc_{10}}{2^{s+1}}\delta^{\beta} \log (A/8) - C_{12},
 \end{equation}
 with $\beta = \frac{s-d+2}{s-d+1}$.
We would arrive at a contradiction if the right hand side of (\ref{Aupbd}) exceeds $3C_2$.  Indeed, it would follow that $$|R(\chi_{\mathbf{R}^d\backslash B(x, Ar)}\mu)(x) - R(\chi_{\mathbf{R}^d\backslash B(x, r)}\mu)(x)|\geq 3C_2,$$ which contradicts the Cotlar lemma (Lemma \ref{cotlar}).  This will be achieved if $A = \exp(C_{13}/\delta^{\beta})$.

It remains to choose $\lambda$ and $n$ so that all the above lemmas are applicable and this choice of $A$ is admissible in the end.  We will pick $\lambda$ first.  There are two assumptions on $\lambda$ independent of $n$:  $\lambda\leq\delta$, and $\lambda<c_{11}\delta A^{s-d}$. A reasonable choice of $\lambda$ is therefore $\lambda = \exp(-C_{14}/\delta^{\beta})$.
When choosing $n$, we have to satisfy the following three conditions:
$$2^{-n(s+1-d)}\leq c_6\lambda, \;c_9(2^{n(d-s)}\delta)^{1/d}>64,\, \text{ and } A< \sqrt{c_9 (2^{n(d-s)}\delta)^{1/d}}.$$
(The first condition is a restatement of (\ref{nlambdacond}), which guarantees that the alternative in Lemma \ref{bottomalt} holds with our choice of $\lambda$.) All three conditions are lower bounds on $n$.  In terms of the order of magnitude of $n$ as $\delta$ tends to zero, the first and third conditions are the most restrictive.  We are thus forced to choose $ n = \lfloor C_{15}/\delta^{\beta}\rfloor$.

With such choices of $\lambda$ and $n$, part (ii) of the alternative is in contradiction with the boundedness of the Riesz transform.   Substituting these values into (\ref{EB0small}), we get the desired estimate for the measure of $E(B_0)$.
\end{proof}

\section{The Cantor construction}\label{Cantorsec}
In this section we will use Proposition \ref{qualv} to quantify the Cantor construction of Eiderman, Nazarov and Volberg \cite{ENV11}.
\subsection{The general outline of the construction}Let $\Delta>0$, and let $\gamma \in(0,1]$.
Suppose that  $\mu$ is a finite non-negative measure with $||R(\mu)||_{L^{\infty}}\leq 1$.  Assume that
\begin{equation}\label{levelsetfalse}
\mu\Bigl(\!\Bigl\{x\in \mathbf{R}^d\!:\!\mathcal{L}\Bigl(\!\Bigl\{\!r\in (0,\infty) \!: \!\frac{\mu(B(x,r))}{r^s}>\Delta\!\Bigl\}\!\Bigl)>\!T\!\Bigl\}\!\Bigl)\!>\!2\gamma\mu(\mathbf{R}^d).
\end{equation}
We will show that this inequality contradicts the boundedness of the Riesz transform in $L^2(\mu)$ if $T$ is large enough.  Theorem \ref{thm1} will follow once we quantify this statement by obtaining a contradiction for every $T\geq\exp[(C\Delta^{-1}\gamma^{-1})^{1/\alpha}]$, with $C$ and $\alpha$ depending on $s$ and $d$ only.

Due to the growth condition (\ref{growth}), we may restrict our attention to $0<\Delta\leq C_1$.

The finiteness of $\mu$ guarantees that for any choice of $\Delta>0$, we have $\mu(B(x,r))\leq\Delta r^s$ for all $x\in \mathbf{R}^d$ and $r\geq R = \bigl(\tfrac{\mu(\mathbf{R}^d)}{\Delta}\bigl)^{1/s}$.  Hence there exists a compact set $E$ with
$$E\subset\Bigl\{x\in \mathbf{R}^d:\mathcal{L}\Bigl(\!\Bigl\{r\in (0,R) : \frac{\mu(B(x,r))}{r^s}>\Delta\Bigl\}\!\Bigl)>T\Bigl\},$$ such that $\mu(E)\geq \gamma \mu(\mathbf{R}^d)$.  Since both the condition $||R(\mu)||_{L^{\infty}}\leq 1$ and the assumption (\ref{levelsetfalse}) are invariant under replacing $\mu$ by $\mu(R\,\cdot\,)/R^s$, we may assume that $R=1$ without loss of generality.

The expression in (\ref{levelsetfalse}) becomes more palatable if we discretize the $\mathcal{L}$ measure.   
To this end, we define a \emph{good scale at} $x$ to be a dyadic fraction $2^{-k}$, $k\in \mathbf{Z}_+$, for which the ball $B(x, 2^{-k})$ satisfies
\begin{equation}\label{goodscale}\frac{\mu(B(x,2^{-k}))}{2^{-sk}}>\frac{\Delta}{2^s}.
\end{equation}
Now suppose $\mu(B(x,r))>\Delta r^s$ for some $r\in (2^{-k-1}, 2^{-k}]$,  $k\in \mathbf{Z}_+$. Then we have $\mu(B(x, 2^{-k}))>\Delta 2^{-(k+1)s}$. It follows that
\begin{equation}\begin{split}\mathcal{L}\Bigl(&\Bigl\{r\in (0,1): \frac{\mu(B(x,r))}{r^s}>\Delta\Bigl\} \Bigl)\\
&\leq (\log 2)\cdot \text{card}\{k\in \mathbf{Z}_+: 2^{-k} \text{ is a good scale for $x$}\}.
\end{split}\end{equation}
We conclude that \emph{each point }$x\in E$ \emph{possesses }$T$ \emph{distinct good scales.} The construction of Cantor levels relies upon the existence of a noticeable set where all points have plenty of good scales.

We will need to introduce four auxiliary parameters, $N$, $\varepsilon$, $M$, and $\delta$, which will be chosen in this order to depend on $\gamma$, $\Delta$, $s$, and $d$.  The parameters $N$ and $M$ can be thought of as large, while $\varepsilon$ and $\delta$ can be thought of as small.  Their primary roles in the construction are described in the table below.
\begin{center}
  \begin{tabular}{ | l || l | }
    \hline
    Parameter  & Primary purpose of parameter \\ \hline\hline
    $N$ & The number of levels in the Cantor construction.\\ \hline
    $\varepsilon$  & The parameter controlling the measure of points lying \\
    & in various exceptional sets that we will need to remove. \\
    \hline
    $M$ & The parameter controlling the size of a low density\\
    & region around each cell.\\ \hline
    $\delta$  & The parameter controlling the overall density\\
    &of the measure in  each Cantor cell.\\ \hline
  \end{tabular}
\end{center}

During the construction, there will be several size requirements on $T$ - in terms of $N$, $\varepsilon$, $M$, and $\delta$ - to ensure there are sufficiently many good scales at any point of $E$ in order to construct a Cantor set deep enough to apply the arguments of \cite{ENV11}.   

\begin{figure}[t]\label{cellpic}
 \centering
 \includegraphics[trim = 20mm 0mm 12mm 0mm, clip, width = 115mm]{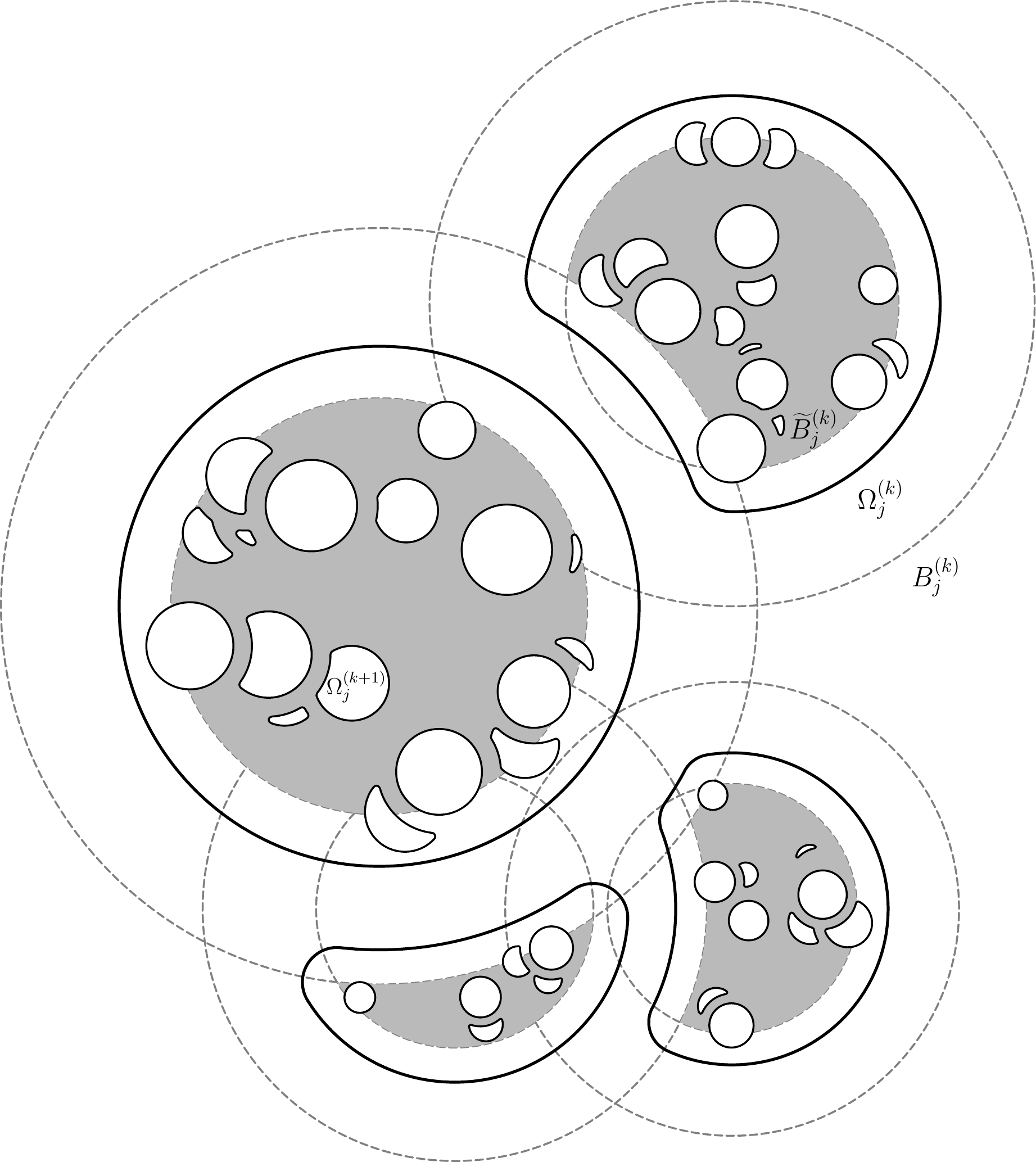}
 \caption[The Cantor construction.]
 {The figure depicts two levels of the Cantor construction.  Shown are four level $k$ cells, where the construction is displayed in full.  The large dashed balls $B_j^{(k)}$ are the bottom cover balls at the level $k$.  Each thick black path is the boundary of a Cantor cell $\Omega_j^{(k)}$.  The regions $\widetilde{B}_j^{(k)}$, partially shaded in grey, contain the inner sets $\widetilde{E}_j^{(k)}$.
 These are covered by the level $k+1$ cells $\Omega_j^{(k+1)}$, which are filled with white for contrast.}
\end{figure}

Each layer of the Cantor construction begins with choosing a \textit{top cover}.  The top cover will consist of high density balls corresponding to certain good scales.  We then apply Proposition \ref{qualv} to find the \textit{bottom cover}; namely a collection of low density balls, whose union contains all but a small portion of $E$.  Finally, we will modify these low density balls in order to obtain the Cantor cells of a given level. 

\subsection{The construction of one level}\label{topcover}

Consider two compact sets $\widetilde{E}$ and $\Omega$, both contained in an open ball $B$ of radius $\rho$.  Suppose $\widetilde{E}\subset \Omega$, and $\operatorname{dist}(\widetilde{E}, \partial\Omega)\geq\varepsilon \rho$.  A triple $(\Omega, \widetilde{E}, B)$ satisfying these properties is called an \emph{admissible triple}.  Assume that each $x\in \widetilde{E}$ possesses $\widetilde{T}$ good scales $2^{-k}$ with $2^{-k}\leq \varepsilon \rho/4$.  


(i) \textbf{\textit{The top cover.}}   At each point $x\in \widetilde{E}$, consider the set of all good scales $2^{-k}$ satisfying $2^{-k}\leq \varepsilon \rho/4$, and denote by $r_x$ the largest of those good scales.  We will apply the Vitali construction to the balls $\{B(x,r_x)\}_{x\in\widetilde{E}}$.

First choose $B(z_1,r_1)$ to be a ball of largest radius from the collection $\{B(x, r_x)\}_{x\in \widetilde{E}}$ (recall that each $r_x$ is nonpositive integer power of $2$ so the largest ball always exists).  Given balls $B(z_1,r_1), \dots, B(z_k, r_k)$, we choose $B(z_{k+1}, r_{k+1})$ to be a largest ball $B(x, r_x)$ that is disjoint from every previous ball $B(z_j, r_j)$, $j=1, \dots, k$.  If no further selection is possible, we terminate the process.  Since $\widetilde{E}$ is a bounded set, if the algorithm does not terminate, the radii $r_j$ tend to $0$ as $j\rightarrow \infty$.  By construction, the radii of the balls $B_j$ are non-increasing.


The balls $B(z_j, 2r_j)$ cover the set $\widetilde{E}$.  In fact,
\begin{equation}\label{blah} \text{for any }x\in \!\widetilde{E},\text{ we have }x\in\! B(z_j, 2r_j)\text{ for some }j \text{ with }r_j\geq r_x.\end{equation}  Indeed, otherwise $B(x, r_x)$ is disjoint from all balls $B(z_j, r_j)$ with $r_j\geq r_x$, but has not been chosen in the Vitali cover. This contradicts the selection rule.

By compactness, there exists $J\in \mathbf{N}$ such that the finite sequence $B(z_1, 2r_1), \dots, B(z_J, 2r_J)$ covers $\widetilde{E}$.  For a point $x\in \widetilde{E}$, let $j(x)\in \{1,\dots, J\}$ be the index corresponding to a largest ball $B(z_{j(x)}, 2r_{j(x)})$ containing $x$.  
Since the radii $r_j$ are non-increasing, from (\ref{blah}) we see that $r_{j(x)}\geq r_x$. 

The finite collection of further enlarged balls $T_j = B(z_j, 4r_j)$, $j=1, \dots, J$,
 forms the \textit{top cover}.    We will need the following two key observations about the top cover.

 First, for each point $x\in \widetilde{E}$,  the associated top cover ball $T_{j(x)}=B(z_{j(x)}, 4r_{j(x)})$ satisfies $x\in\frac{1}{2} T_{j(x)}$ and $r_{x}\leq r_{j(x)}$.  Therefore, the number of good scales $2^{-k}$ at $x$  with $2^{-k}\leq r_{j(x)}$ is still at least $\widetilde{T}$.

The second key property is a measure estimate:
\begin{equation}\label{meastj}\sum_{j=1, \dots,\, J}\mu(T_j)\leq \frac{C_{19}}{\Delta}\mu(\Omega).\end{equation}
To see this, note that $\mu(B(z_j, r_j))\geq \Delta r_j^s/2^s$, and therefore (\ref{growth}) implies that
$$\mu(B(z_j, 4r_j))\leq C_1 4^s r_j^s\leq \frac{C_1 8^s}{\Delta}\mu(B(z_j, r_j)).$$  As the balls $B(z_j, r_j)$ are disjoint and contained in $\Omega$, we conclude that (\ref{meastj}) holds.


(ii)  \textbf{\textit{From the top cover to the bottom cover.}}  Suppose that $M>1$ and $\delta<\min\bigl(1, \tfrac{\Delta}{2^{s+1}}\bigl)$.  Fix $q\in \mathbf{N}$ such that $q$ is slightly greater than $\varepsilon^{-1}\exp[C_{16}2^{s\beta}M^{s\beta}/\delta^{\beta}].$  For each $j\in \{1, \dots, J\}$, we apply Proposition \ref{qualv} with $B_0 = T_j$, and $\delta$ replaced by $\delta/(2M)^s$.  With our choice of $q$, the set $E_{\delta/(2M)^s}^q(T_j)\subset \frac{1}{2}T_j$ has measure $\mu(E^q_{\delta/(2M)^s}(T_j))\leq \varepsilon \mu(T_j)$ for each $j$.  Define the exceptional set $F$ by\begin{equation}\label{exceptset}F=\bigcup_{j=1,\dots, \, J}E^q_{\delta/(2M)^s}(T_j).\end{equation}
Then $F$ is an open set, and (\ref{meastj}) implies that $\mu(F) \leq \frac{C_{19}\varepsilon}{\Delta}\mu(\Omega)$.

Let $x\in \widetilde{E}\,\backslash F$.   Since $x\in(\frac{1}{2}T_{j(x)})\backslash E_{\delta/(2M)^s}^q(T_{j(x)})$, and $r_{j(x)}\geq r_x$, there exists a ball $B(x,M\tilde{t}_x)$ such that \begin{equation}\label{txrad}r_{j(x)}\geq  M \tilde{t}_x \geq 2^{-q}4r_{j(x)}\geq 2^{-(q-2)}r_x,\end{equation} and $\mu(B(x, M\tilde{t}_x))\leq \frac{\delta}{(2M)^s}(M\tilde{t}_x)^s = \frac{\delta}{2^s} \tilde{t}_x^s$. Now let $t_x = 2^{-\ell}r_{j(x)}$ where $\ell$ is such that $2^{-\ell}r_{j(x)}\in (\frac{1}{2}\tilde{t}_x, \tilde{t}_x]$.  Then the ball $B(x, Mt_x)$ satisfies
\begin{equation}\label{smalldens}\mu(B(x, Mt_x))\leq \delta t_x^s.
\end{equation}
By construction, $B(x,M t_x)\subset T_{j(x)}$, and moreover, \begin{equation}\label{lowdensdeep}\operatorname{dist}(B(x, Mt_x), \partial T_{j(x)})\geq r_{j(x)}.\end{equation} From (\ref{txrad}), we see that $t_x\geq \frac{2^{-(q-1)}}{M}r_x$.  Therefore, if $\widetilde{T}> q+\log_2 M$, then each $x\in \widetilde{E}\,\backslash F$ has at least $\widetilde{T}- q-\log_2 M $ good scales $2^{-k}$ with $2^{-k}\leq t_x$.  


We will now shrink the balls $B(x,t_x)$ to eliminate the possibility that the mass of any ball in the collection is concentrated near its boundary.

To this end, fix $x\in \widetilde{E}\,\backslash F$.  Suppose $(1-3\varepsilon)^s>\tfrac{1}{2}$, and put $\lambda_j = (1-3\varepsilon)^j$. Consider the sequence of balls $\{B(x, \lambda_j t_x)\}_j$, and assume that
\begin{equation}\label{bignearbound}\mu(B(x, \lambda_j t_x) \backslash B(x, \lambda_{j+1}t_x)) \geq 3d\varepsilon \mu(B(x, \lambda_j t_x)),
\end{equation}
for all $j=0,\dots,k-1$.
Then
$$\frac{\mu(B(x,\lambda_{j}t_x))}{(\lambda_{j}t_x)^{s}}\! \leq \!\frac{1-3d\varepsilon}{(1-3\varepsilon)^{s}}\!\cdot\!\frac{\mu(B(x, \lambda_{j-1}t_x))}{(\lambda_{j-1}t_x)^{s}}<\frac{\mu(B(x, \lambda_{j-1}t_x))}{(\lambda_{j-1}t_x)^{s}},
$$
for each $j=1,\dots, k$.  Since $\mu(B(x, t_x))\leq \delta t_x^s$, we see by induction that
\begin{equation}\label{stopdecay}\frac{\mu(B(x,\lambda_{j}t_x))}{(\lambda_{j}t_x)^{s}}\leq \delta \,\,\text{ for all }j=0,\dots, k.
\end{equation}

Suppose that $2^{-\ell}$ is a good scale at $x$ with $2^{-\ell}\in [\lambda_k t_x, t_x]$.  Then let $j\geq 0$ be the largest index with $\lambda_j t_x\geq 2^{-\ell}$.  Since $0\leq j\leq k$, we may apply (\ref{stopdecay}) to observe that $$\mu(B(x,2^{-\ell}))\leq \mu(B(x, \lambda_j t_x))\leq\delta \lambda_j^st_x^s\leq\frac{\delta}{(1-3\varepsilon)^s}2^{-\ell s}< \frac{\Delta}{2^{s}}2^{-\ell s},$$
which is a contradiction.  As long as $\widetilde{T}> q+\log_2 M$, there is a good scale $x$ no greater than $t_x$, and hence (\ref{bignearbound}) fails for a finite index.

Let $k$ be the least index such that
\begin{equation}\label{annulinot}
\mu(B(x, \lambda_k t_x) \backslash B(x, \lambda_{k+1}t_x)) \leq 3d\varepsilon \mu(B(x, \lambda_k t_x)).
\end{equation}
As we have seen,

\centerline{\emph{there is no good scale at $x$  between $\lambda_k t_x$  and $t_x$.}}  

Now put $\rho(x) = \lambda_k(x)t_x$.  The introduction of $\lambda_k$ does not distort the density estimate (\ref{smalldens}) too much.

\begin{lem}\label{afterstoplem} The following estimate holds:
\begin{equation}\label{afterstop}\mu(B(x, M\rho(x))) \leq 2 M^s \delta \rho(x)^s.
\end{equation}
\end{lem}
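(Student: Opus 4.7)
The plan is to leverage the uniform density estimate (\ref{stopdecay}) along the geometric sequence of radii $\{\lambda_j t_x\}_{j=0}^k$, together with the initial hypothesis (\ref{smalldens}), by matching the target radius $M\rho(x) = M\lambda_k t_x$ to an appropriate element of that sequence. The natural dichotomy is according to whether $M\lambda_k \geq 1$ (the target exceeds $t_x$) or $M\lambda_k < 1$ (the target lies below $t_x$).

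In the first case, since $\lambda_k \leq 1$ we have $B(x, M\rho(x)) \subset B(x, Mt_x)$, so (\ref{smalldens}) gives $\mu(B(x, M\rho(x))) \leq \delta t_x^s$; multiplying the right side by $(M\lambda_k)^s \geq 1$ yields the even sharper bound $\delta t_x^s \leq M^s \delta \rho(x)^s$. In the second case, since $\lambda_0 = 1 > M\lambda_k > \lambda_k$, I would select the largest $j^\ast \in \{0,\dots,k-1\}$ with $\lambda_{j^\ast} \geq M\lambda_k$; maximality then gives $\lambda_{j^\ast+1} < M\lambda_k$, and the geometric relation $\lambda_{j^\ast} = \lambda_{j^\ast+1}/(1-3\varepsilon)$ forces $\lambda_{j^\ast} t_x < M\rho(x)/(1-3\varepsilon)$. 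Applying (\ref{stopdecay}) at level $j^\ast$, which is a legitimate index since $0 \leq j^\ast \leq k-1$, then yields
\[
\mu(B(x, M\rho(x))) \leq \mu(B(x, \lambda_{j^\ast} t_x)) \leq \delta(\lambda_{j^\ast} t_x)^s < \frac{\delta M^s \rho(x)^s}{(1-3\varepsilon)^s} \leq 2 M^s \delta \rho(x)^s,
\]
where the final inequality uses the standing assumption $(1-3\varepsilon)^s > 1/2$.

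I do not foresee a serious obstacle here; the argument is essentially bookkeeping, and the only care needed is to verify that the index $j^\ast$ in the second case lies in the range where (\ref{stopdecay}) is available. The factor of $2$ in (\ref{afterstop}) is precisely the slack required to absorb the factor $(1-3\varepsilon)^{-s}$ that arises because $M\rho(x)$ will, in general, fall strictly between two consecutive scales in the discrete sequence $\{\lambda_j t_x\}_{j \geq 0}$.
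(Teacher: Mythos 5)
Your proof is correct and follows essentially the same route as the paper's: the same dichotomy between $M\rho(x)\ge t_x$ and $M\rho(x)<t_x$, and in the second case the same choice of the largest index $j$ with $\lambda_j t_x\ge M\rho(x)$ followed by an application of (\ref{stopdecay}) and the bound $(1-3\varepsilon)^{-s}\le 2$. The only (harmless) difference is that you note $j^\ast\le k-1$, whereas the paper is content with the looser $0\le j\le k$.
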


\begin{proof}  If $M\rho(x)\geq t_x$, then  (\ref{afterstop}) follows from (\ref{smalldens}).  Otherwise, let $j$ be the largest index with $\lambda_jt_x\geq M\rho(x)$.  We have $0\leq j\leq k$  and (\ref{stopdecay}) yields
$$\mu(B(x, M\rho(x)))\leq \delta (\lambda_jt_x)^s \leq (1-3\varepsilon)^{-s}\delta M^s\rho(x)^s\leq 2M^s\delta \rho(x)^s,$$
as required.
\end{proof}

We shall now apply the Besicovitch covering construction to the family of balls $\{B(x, \rho(x))\}_{x\in \widetilde{E}\backslash F}$.    First note that all radii $\rho(x)$ are of the form $2^{-\ell_1}(1-3\varepsilon)^{\ell_2}$, for some nonnegative integers $\ell_1$ and $\ell_2$ (which depend on the point $x$).  
Hence, given any non-empty subcollection of balls from $\{B(x, \rho(x))\}_{x\in \widetilde{E}\backslash F}$, there exists a ball of maximum radius in the sub-collection.

Let $B_1 = B(x_1, \rho_1)$ be a largest ball $B(x, \rho(x))$.  Given balls $B_1,\dots, B_k$, let $B_{k+1} = B(x_{k+1}, \rho_{k+1})$ be a largest ball $B(x, \rho(x))$ whose centre $x$ does not lie in $B_j$ for any $j\in \{1, \dots, k\}$.  If no further selection is possible, the process terminates.  It is clear by construction that the radii are non-increasing in $j$.  Since $\widetilde{E}\backslash F$ is bounded, if the algorithm does not terminate, then $\rho_j \rightarrow 0$ as $j\rightarrow \infty$ (note that the balls $B(x_j, \rho_j/2)$ are disjoint).


A ball $B(x, \rho(x))$ would only remain unselected if $x\in B_j$ for a ball $B_j$ with $\rho_j \geq \rho(x)$.  Therefore, the balls $B_j$ form an open cover of the compact set $\widetilde{E}\backslash F$.  It follows from compactness that the selection algorithm terminates with a finite sequence $\{B_j\}_{j=1, \dots, K}$, which covers $\widetilde{E}\,\backslash F$.  The finite collection of balls $\{B_j\}_{j=1,\dots, K}$ forms the \emph{bottom cover}.

The selection rule guarantees that a centre $x_j$ does not lie in any ball $B_k$ for $k\neq j$.  This is immediate for $k<j$. If $k>j$ then $\rho_k\leq \rho_j$, and so $x_j\in B_k$ implies that $x_k\in B_j$, which contradicts the choice of $B_k$. Suppose now that $z$ lies in the intersection of two balls $B_{j}$ and $B_{k}$.   Since $|x_j-x_k|\geq \max(\rho_j, \rho_k)$, the line segment between $x_j$ and $x_k$ is the longest side of the triangle formed by the three points $z$, $x_j$, and $x_k$.  It follows that the angle between $x_j$ and $x_k$, measured at the point $z$, is at least $\pi/3$.  Since this holds for each pair of Besicovitch balls containing $z$, we see that any point can be contained in at most $C_{20}$ of the balls in the bottom cover.
Let $\widetilde{B}_j$ be the closure of $(1-3\varepsilon)B_j\backslash \cup_{i<j}B_i$ for each $j=1, \dots, K$, and define $\widetilde{E}_j = \widetilde{B}_j\cap \widetilde{E}\backslash F$.   If $x\in \widetilde{E}_j$ for some $j
\in \{1, \dots, K\}$, then $B_j$ is the largest of the Besicovitch balls to contain $x$.  By the selection rule, it follows that $\rho_j \geq \rho(x)$.  Recall that there are no good scales at $x$ between $\rho(x)$ and $t_x$.  As a result, if $\widetilde{T}$ satisfies \begin{equation}\label{tildetcond}\widetilde{T}>  q+\log_2M +\log_2 \frac{1}{\varepsilon} +3,\end{equation} then there are at least $\widetilde{T}-  q-\log_2M -\log_2 \frac{1}{\varepsilon} -3$ good scales $2^{-k}$ at each $x\in \widetilde{E}_j$, with $2^{-k}\leq \varepsilon \rho_j/4$.

The sets $\widetilde{E}_j$ cover $\widetilde{E}$ except for the intersection of $\widetilde{E}$ with $F\cup \bigcup_j [B_j\backslash (1-3\varepsilon)B_j]$.  This latter set has small measure.  Indeed, since the balls $B_j$ are contained in $\Omega$, and have a finite covering number of at most $C_{20}$, we may apply (\ref{annulinot}) to estimate $$\mu\bigl(\bigcup_j B_j\backslash (1-3\varepsilon)B_j\bigl)\leq 3d\varepsilon\sum_j \mu(B_j)\leq 3d\varepsilon C_{20}\mu(\Omega).$$ Combined with the measure estimate for $F$, we see that
\begin{equation}\label{levelloss}\mu\bigl(F\cup \bigcup_j B_j\backslash (1-3\varepsilon)B_j\bigl) \leq C_{21}\varepsilon \mu(\Omega)/\Delta,\end{equation}
since $\Delta\leq C_1$.

The sets $\widetilde{B}_j$ are nicely separated:  $\operatorname{dist}(\widetilde{B}_j, \widetilde{B}_k) \geq 3\varepsilon \max(\rho_j, \rho_k)$ for all $j\neq k$.  For those non-empty $\widetilde{B}_j$, define $\Omega_j$ to be the closed $\varepsilon \rho_j$ neighbourhood of $\widetilde{B}_j$. It is clear that $\operatorname{dist}(\Omega_j, \Omega_k) \geq \varepsilon \max(\rho_j, \rho_k)$ whenever $j\neq k$.

 Let us now summarize the key properties of the construction:

a)  \emph{Self-similarity.}  Given an admissible triple $(\Omega, \widetilde{E}, B)$, the algorithm yields a collection of admissible triples $(\Omega_j, \widetilde{E}_j, B_j)$, with $\Omega_j \subset \Omega$ for each $j$.  Indeed, for each $j$ we have $\widetilde{E}_j\subset \Omega_j\subset B_j$, and $\operatorname{dist}(\widetilde{E}_j, \partial \Omega_j)\geq\varepsilon \rho_j$.  We are therefore able to iterate the algorithm.


b) \emph{Uniform cost in good scales.}  Suppose that $\widetilde{T}$ satisfies (\ref{tildetcond}). Then there are enough good scales at each point of $\widetilde{E}\backslash F$ to construct the cells $\Omega_j$ and $\widetilde{E}_j$.  Furthermore, for each $j$, and for any $x\in \widetilde{E}_j$, there are at least $\widetilde{T}-  q-\log_2M -\log_2  \frac{1}{\varepsilon} - 3$ good scales at $x$ smaller than $\varepsilon \rho_j/4$.

c) \emph{Small loss of measure.}  An immediate consequence of (\ref{levelloss}) is that
\begin{equation}\label{smalllosest}\mu\Bigl(\bigcup_j \widetilde{E}_j\Bigl)\geq \mu(\widetilde{E}) - C_{21}\varepsilon \mu(\Omega)/\Delta.\end{equation}

d) \emph{Separated cells.} Any two cells $\Omega_i$ and $\Omega_j$ are well separated: $\operatorname{dist}(\Omega_i, \Omega_j)\geq \varepsilon \max(\rho_i,\rho_j)$ for any $i\neq j$. 

e) \emph{Low density cells.}  For each $j=1,\dots,K$, the cell $\Omega_j \subset B_j$ and \begin{equation}\label{bottomlowdens}\mu(M B_j)\leq 2M^s\delta \rho_j^s.\end{equation}

f) \emph{Thick cells}.  By their definition, each cell $\Omega_j$ contains an open ball of radius $\varepsilon \rho_j$.

g) \emph{Associated top cover balls.}  Each cell $\Omega_j$ can be associated to a top cover ball $T_k = B(z_k, 4r_k)$, for some $k\in \{1,\dots, J\}$, so that $M\rho_j \leq r_k$, $\Omega_j\subset B_j \subset T_k$, and $\operatorname{dist}(B_j, \partial T_k)\geq r_k.$  To see this, note that the bottom cover ball $B_j$ is a subset of a low density ball $B(x,t_x)$, for some $x\in \widetilde{E}\backslash F$.  The top cover ball $T_{j(x)}$ satisfies the required properties (see (\ref{lowdensdeep})).

\subsection{Construction of the set.}\label{alllevels}
We will now carry out an $N$-fold iteration of the algorithm of Section \ref{topcover} to produce the Cantor set.

For each $k\geq 0$, define $\widetilde{T}^{(k)}$ by
$$\widetilde{T}^{(k)} = (N-k)(q+\log_2 M+\log_2\frac{1}{\varepsilon}+3).
$$
Assume that we are given a finite collection of admissible level $k$ triples $(\Omega_j^{(k)}, \widetilde{E}_j^{(k)}\!, \!B_j^{(k)})$, satisfying the following properties:
\begin{itemize}
\item For each $j$, every $x\in \widetilde{E}^{(k)}_j$ has at least $\widetilde{T}^{(k)}$ good scales smaller than $\rho_j^{(k)}/4$, where $\rho_j^{(k)}$ is the radius of $B_j^{(k)}$.
\item For any $i \neq j$, $\operatorname{dist}(\Omega_i^{(k)}, \Omega_j^{(k)})\geq \varepsilon \max(\rho^{(k)}_i, \rho^{(k)}_j)$.
\end{itemize}

With $j$ fixed, applying the algorithm to the triple $(\Omega_j^{(k)}, \widetilde{E}_j^{(k)}, B_j^{(k)})$ yields a finite collection of new admissible triples. The union (over $j$) of all these collections forms the collection of level $k+1$ triples  $(\Omega_{\ell}^{(k+1)}\!,\! \widetilde{E}_{\ell}^{(k+1)}\!,\! B_{\ell}^{(k+1)}).$

For a fixed $\ell$, every $x\in \widetilde{E}_{\ell}^{(k+1)}$ has at least $\widetilde{T}^{(k+1)}$ good scales less than or equal to $\varepsilon \rho_{\ell}^{(k+1)}/4$,
where $\rho_{\ell}^{(k+1)}$ is the radius of $B_{\ell}^{(k+1)}$.  This follows from property (b) of the construction.

Note that if $\ell\neq n$, then $\operatorname{dist}(\Omega_{\ell}^{(k+1)}, \Omega_{n}^{(k+1)})\geq \varepsilon \max(\rho^{(k+1)}_{\ell}, \rho^{(k+1)}_{n})$.  To see this, note that each level $k+1$ cell $\Omega_{\ell}^{(k+1)}$ has a unique parent cell $\Omega_j^{(k)}$.  If two level $k+1$ cells originate from the same parent cell, then the required separation follows directly from the construction (see property (d) above).  If they have different parent cells, then the claim follows from the separation between those parent cells, since $\rho_j^{(k)}\geq \rho_{\ell}^{(k+1)}$ whenever $\Omega_j^{(k)}$ is the parent cell of $\Omega_{\ell}^{(k+1)}$.

To begin the iteration, assume that $T>\widetilde{T}^{(0)}$.  Let $\widetilde{E}^{(0)}_1 = E$, and put $\displaystyle\rho^{(0)}_1 = 2\text{diam}(E)+\frac{ \,4}{\varepsilon}.$  Define $B^{(0)}_1$ to be a ball of radius $\rho_1^{(0)}$, centred at a point of $E$.  Let $\Omega_1^{(0)}$ be the closed $\varepsilon \rho_1^{(0)}$-neighbourhood of $E$.  The initial triple $(\Omega_1^{(0)}, \widetilde{E}_1^{(0)}, B_1^{(0)})$ is admissible provided $\varepsilon \leq 1/2$.  Indeed, for such $\varepsilon$ we have $\varepsilon \rho^{(0)}_1 +\text{diam}(E)< \rho^{(0)}_1$, and hence $\Omega_1^{(0)}\subset B^{(0)}_1$.  Note that the maximal good scale at each point of $E$ is smaller than $\varepsilon \rho_1^{(0)}/4$ (this is merely the statement that $\varepsilon \rho_1^{(0)}/4>1$).  

Iterating the construction $N$ times from this initial triple, we  obtain the levels $(\Omega_j^{(k)}, \widetilde{E}_j^{(k)}, B_j^{(k)})_j$, for $k=0,\dots, N$.   The sets $\Omega_j^{(k)}$ are the \emph{level $k$ \!Cantor cells}.  




The condition that $T>\widetilde{T}^{(0)}=N ( q +\log_2 M + \log_2 \frac{1}{\varepsilon} + 3 )$ guarantees a sufficient number of good scales at any point in $E$ to construct the $N$ levels of the Cantor set. Since $q$ is the dominant term, it suffices to require that $T$ satisfies
\begin{equation}\label{Tcond}T \geq \frac{C_{22} N}{\varepsilon}\exp\Bigl(\frac{C_{16} 2^{s\beta}M^{s\beta}}{\delta^{\beta}}\Bigl),\end{equation}
with $\beta$ as in Proposition \ref{qualv}.

Let us now  place a restriction on $\varepsilon$ to ensure that the majority of the measure of $E$ is preserved after the $N$-fold iteration.  To this end, note that for each $k=1, \dots, N$, it follows from property (c) of the construction that
\begin{equation}
\mu\Bigl(\bigcup_{\ell} \widetilde{E}_{\ell}^{(k)}\Bigl)\geq \mu\Bigl(\bigcup_j \widetilde{E}_j^{(k-1)}\Bigl) -\frac{C_{21}\varepsilon}{\Delta} \sum_j\mu(\Omega_j^{(k-1)}).
\end{equation}
Since the cells $\Omega_j^{(k-1)}$ are disjoint, we have
$$\sum_j\mu(\Omega_j^{(k-1)})\leq \mu (\mathbf{R}^d)\leq \frac{\mu(E)}{\gamma},
$$
and recalling that $\widetilde{E}^{(0)}_1=E$, we inductively obtain
$$\mu\Bigl(\bigcup_{\ell} \widetilde{E}_{\ell}^{(k)}\Bigl)\geq \bigl(1-\frac{k C_{21}\varepsilon }{\Delta \gamma}\bigl)\mu(E),$$for any $k=0,\dots, N$.
Suppose that $\varepsilon$ satisfies
\begin{equation}\label{epscond}N\frac{C_{21} \varepsilon}{\Delta\gamma} <\frac{1}{2}.
\end{equation}
Then we see that $E$ will not be exhausted after constructing the $N$ levels.  Moreover, we have the estimate
\begin{equation}\label{Ethick}\mu\Bigl(\bigcup_j \widetilde{E}_j^{(N)}\Bigl)\geq \frac{1}{2}\mu(E).\end{equation}

Let $F=\bigcup_j \Omega_j^{(N)}$, and define
$\mu' = \chi_F \mu$ to be the rarefied measure associated to the $N$-th Cantor level.
We will make regular use of the following properties of the measure $\mu'$.

(1) \emph{Domination}.  The measure $\mu'$ is dominated by $\mu$.

(2) \textit{Separation in the support}. Suppose $\Omega$ is a level $k$ Cantor cell, and $B= B(x, \rho)$ is the ball in the bottom cover of the $k$-th level that gave birth to $\Omega$.  Then we have
\begin{equation}\label{suppsep}\operatorname{dist}(\operatorname{supp}(\mu')\backslash \Omega, \Omega)\geq\varepsilon \rho.\end{equation}
This property is an immediate consequence of the separation between the Cantor cells $\Omega_j^{(k)}$ for each level $k=1,\dots, N$.


(3) \textit{Significant mass}.  Since $\widetilde{E}_j^{(N)}\subset \Omega_j^{(N)}$, the inequality (\ref{Ethick}) implies that $$\mu'(\mathbf{R}^d) \geq \mu(E)/2\geq \gamma \mu(\mathbf{R}^d)/2.$$


\section{The $L^2(\mu')$ estimates}\label{l2ests}

In this section we will show that assumption (\ref{levelsetfalse}) implies that the norm of $R^{\#}(\mu')$ in $L^2(\mu')$ is large.  From this we will conclude the proof of Theorem \ref{thm1}.

\subsection{Reduction to $L^2(\mu')$ estimates} We first introduce the partial Riesz transforms.  For $x\in \bigcup_j \Omega_j^{(k)}$, define $\Omega^{(k)}(x)$ to be the unique level $k$ cell containing $x$.  The partial Riesz transform $R^{(k)}(\mu')$ is defined by
\begin{equation}\label{partRiesz}R^{(k)}(\mu')(x) = \int_{\Omega^{(k)}(x)\backslash \Omega^{(k+1)}(x)}\frac{y-x}{|y-x|^{1+s}} d\mu'(y),
\end{equation}
for $x\in \bigcup_j \Omega_j^{(k+1)}$.

We will see that Theorem \ref{thm1} follows from the subsequent three propositions.

The first proposition concerns the boundedness of the sum of partial Riesz transforms in $L^2(\mu')$.

\begin{prop}\label{prop1}  
The following inequality holds:
\begin{equation}\label{claim1}\int_{\mathbf{R}^d} \Bigl|\sum_{k=0}^{N-1}R^{(k)}(\mu')\Bigl|^2 d\mu' \leq 2\Bigl(C_3+\frac{4M^{2s}\delta^{2}}{\varepsilon^{2s}}\Bigl) \mu'(\mathbf{R}^d).
\end{equation}
\end{prop}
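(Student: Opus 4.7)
The plan rests on a telescoping identity for the partial Riesz transforms. Since $\Omega^{(k+1)}(x) \subset \Omega^{(k)}(x)$, the annular regions $\Omega^{(k)}(x) \setminus \Omega^{(k+1)}(x)$ for $k = 0,\ldots,N-1$ are pairwise disjoint with union $\Omega^{(0)}(x) \setminus \Omega^{(N)}(x)$. Because there is a single level $0$ cell and $\operatorname{supp}(\mu') \subset F \subset \Omega_1^{(0)}$, I would conclude that for every $x \in F = \bigcup_j \Omega_j^{(N)}$,
\[
\sum_{k=0}^{N-1} R^{(k)}(\mu')(x) \;=\; \int_{\operatorname{supp}(\mu') \setminus \Omega^{(N)}(x)} \frac{y-x}{|y-x|^{1+s}}\, d\mu'(y).
\]

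Let $j(x)$ be the unique index with $x \in \Omega_{j(x)}^{(N)}$ and write $B = B_{j(x)}^{(N)}$ with radius $\rho = \rho_{j(x)}^{(N)}$. From the construction, $\Omega_{j(x)}^{(N)} \subset (1-2\varepsilon)B \subset B$, so $B$ is admissible in the supremum defining $R^{\#}(\mu')(x)$. Split the integral above as $A(x) + D(x)$, where
\[
A(x) = \int_{\mathbf{R}^d \setminus 2B} \frac{y-x}{|y-x|^{1+s}}\, d\mu'(y), \qquad D(x) = \int_{2B \setminus \Omega_{j(x)}^{(N)}} \frac{y-x}{|y-x|^{1+s}}\, d\mu'(y).
\]
By definition $|A(x)| \leq R^{\#}(\mu')(x)$, and Theorem~\ref{T1thm} applied to $f = \chi_F$ gives
\[
\int |A|^2\, d\mu' \;\leq\; \int |R^{\#}(\chi_F\mu)|^2\, d\mu \;\leq\; C_3\, \mu(F) \;=\; C_3\, \mu'(\mathbf{R}^d).
\]

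For $D(x)$, I would use two geometric facts: the separation property (2) of $\mu'$ forces $|y-x| \geq \varepsilon\rho$ for every $y \in \operatorname{supp}(\mu') \setminus \Omega_{j(x)}^{(N)}$, while the low-density property (e), together with the assumption $M \geq 2$, yields $\mu'(2B) \leq \mu(MB) \leq 2M^s\delta\rho^s$. Combining these gives the pointwise bound
\[
|D(x)| \;\leq\; (\varepsilon\rho)^{-s} \cdot 2M^s\delta\rho^s \;=\; \frac{2M^s\delta}{\varepsilon^s},
\]
and hence $\int |D|^2\, d\mu' \leq \frac{4M^{2s}\delta^2}{\varepsilon^{2s}}\,\mu'(\mathbf{R}^d)$. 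Applying $|A+D|^2 \leq 2|A|^2 + 2|D|^2$ and integrating delivers (\ref{claim1}). The only conceptual step is the decomposition itself; once the near annulus $2B \setminus \Omega_{j(x)}^{(N)}$ is identified as the region where $T(1)$ does not apply but where the low density of $MB$ gives a trivial pointwise control, there is no substantial obstacle.
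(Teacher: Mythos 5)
Your proof is correct and follows the paper's argument essentially verbatim: telescope the partial Riesz transforms, split at the ball $2B_{j(x)}^{(N)}$ into a far piece bounded by $R^{\#}(\mu')$ (then apply the $T(1)$ bound) and a near piece controlled pointwise by $2M^s\delta/\varepsilon^s$ via the separation and low-density properties of the construction, then square and integrate. The only superficial difference is that you name the two pieces $A$ and $D$ and estimate their $L^2(\mu')$ norms separately rather than recording the pointwise inequality $|\sum_k R^{(k)}(\mu')| \le R^{\#}(\mu') + 2M^s\delta/\varepsilon^s$ as an intermediate lemma, but the substance is identical.
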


The second proposition states that the partial Riesz transforms are almost orthogonal to one another.

\begin{prop}\label{prop2}  There exists a constant $K_1=K_1(s,d)>1$ such that for each $k=0, \dots ,N-2$,
\begin{equation}\label{claim2}\begin{split}
\Bigl|&\int_{\mathbf{R}^d} \Bigl( R^{(k)}(\mu'), \sum_{j=k+1}^{N-1}R^{(j)}(\mu')\Bigl)d\mu'\Bigl| \\
&\leq K_1\sqrt{\mu'(\mathbf{R}^d)}\Bigl(\frac{M^s\delta}{\varepsilon}+\frac{1}{M}\Bigl)\sum_{j=k+1}^{N-1}||R^{(j)}(\mu')||_{L^2(\mu')}.
\end{split}\end{equation}
\end{prop}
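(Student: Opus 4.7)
The plan is to combine the cell structure with antisymmetry of the Riesz kernel $K(z) = z/|z|^{1+s}$. First, observe that for $j\geq k+1$ the value of $R^{(j)}(\mu')(x)$ only depends on mass inside $\Omega^{(j)}(x)\subset\Omega^{(k+1)}(x)$, so the inner product decomposes as
$$\int_{\mathbf{R}^d}\bigl(R^{(k)}(\mu'),R^{(j)}(\mu')\bigr)\,d\mu' = \sum_m\int_{\Omega_m^{(k+1)}}\bigl(R^{(k)}(\mu'),R^{(j)}(\mu')\bigr)\,d\mu'.$$
The crucial additional fact is that
$$\int_{\Omega_m^{(k+1)}} R^{(j)}(\mu')\,d\mu' = 0\qquad\text{for every }j\geq k+1.$$
To see this I unfold the definition into a double integral over pairs $(x,y)$ both lying in a common level-$j$ cell but in distinct level-$(j+1)$ children; this constraint is symmetric in $(x,y)$, while $K$ is antisymmetric, so the integral vanishes.

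Granted this, on each $\Omega_m^{(k+1)}$ I may replace $R^{(k)}(\mu')$ by $R^{(k)}(\mu')-c_m$ for any constant $c_m$ without altering the inner product. Fix $x_m\in\Omega_m^{(k+1)}$ (say the centre of $B_m^{(k+1)}$) and split $R^{(k)}(\mu') = R^{(k)}_{\mathrm{near}}(\mu') + R^{(k)}_{\mathrm{far}}(\mu')$ according to whether the integration variable $y$ lies inside $MB_m^{(k+1)}$ or outside, and choose $c_m = R^{(k)}_{\mathrm{far}}(\mu')(x_m)$. For the far part I use the Lipschitz smoothness estimate $|K(y-x) - K(y-x_m)|\leq C|x-x_m|/|y-x_m|^{1+s}$, which is valid for $M$ large since the integration runs over $|y-x_m|\geq M\rho_m^{(k+1)}/2$ while $|x-x_m|\leq 2\rho_m^{(k+1)}$; a layer-cake estimate of the tail integral against the growth condition (\ref{growth}) then produces the uniform oscillation bound $|R^{(k)}_{\mathrm{far}}(\mu')(x)-c_m|\leq C/M$ on $\Omega_m^{(k+1)}$. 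Cauchy-Schwarz gives a single-cell contribution of at most $(C/M)\mu'(\Omega_m^{(k+1)})^{1/2}\|R^{(j)}(\mu')\|_{L^2(\mu',\Omega_m^{(k+1)})}$.

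For the near part no smoothness is available, so I plan to estimate $\|R^{(k)}_{\mathrm{near}}(\mu')\|_{L^2(\mu',\Omega_m^{(k+1)})}$ directly. Writing $R^{(k)}_{\mathrm{near}}(\mu')$ on $\Omega_m^{(k+1)}$ as the ordinary Riesz transform of $\chi_{MB_m^{(k+1)}\setminus\Omega_m^{(k+1)}}\mu'$ and using the support separation (\ref{suppsep}) to dominate this by the maximal transform $R^{\#}$, the $T(1)$ bound (\ref{T1}) combined with the low-density property (\ref{bottomlowdens}) $\mu(MB_m^{(k+1)})\leq 2M^s\delta(\rho_m^{(k+1)})^s$ delivers the required $L^2$ estimate on each cell. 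Cauchy-Schwarz in $x$ followed by Cauchy-Schwarz in $m$ (with the disjointness of the cells yielding $\sum_m\|R^{(j)}(\mu')\|_{L^2(\mu',\Omega_m^{(k+1)})}^2 = \|R^{(j)}(\mu')\|_{L^2(\mu')}^2$) produces the $M^s\delta/\varepsilon$-contribution in (\ref{claim2}); summing over $j\geq k+1$ completes the argument. I expect the main obstacle to be the near estimate: a naive pointwise bound loses a power of $\varepsilon$ compared with the stated $M^s\delta/\varepsilon$, so securing the sharp dependence forces the $T(1)$ route together with the thick-cell property (f) of the construction, used to relate $\sum_m(\rho_m^{(k+1)})^s$ to $\mu'(\mathbf{R}^d)$ up to the correct power of $\varepsilon$.
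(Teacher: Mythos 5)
Your plan is structurally the same as the paper's: exploit antisymmetry to get a mean-zero property per level-$(k+1)$ cell, then absorb a constant and split the remaining $R^{(k)}(\mu')$ into a near and far part. Your mean-zero claim for each individual $j$ (rather than just the sum over $j$, which is what the paper records in (\ref{meanzerotail})) is correct: for $x\in\Omega_m^{(k+1)}$ the domain of $R^{(j)}(\mu')(x)$ is $\{y\in\Omega_m^{(k+1)}:\Omega^{(j)}(y)=\Omega^{(j)}(x),\,\Omega^{(j+1)}(y)\neq\Omega^{(j+1)}(x)\}$, a symmetric constraint, and the kernel is odd. Your far-part estimate via the Lipschitz bound on the kernel and (\ref{growth}) is likewise exactly what Lemma \ref{osclem} gives, producing the $C/M$ term.

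The gap is the near part. You correctly identify that the straightforward pointwise bound gives $M^s\delta/\varepsilon^s$, and you try to improve this to the $M^s\delta/\varepsilon$ appearing in the displayed inequality by running the $T(1)$ theorem and then summing $\sum_m(\rho_m^{(k+1)})^s$ against Cauchy--Schwarz. But this sum cannot be controlled by $\mu'(\mathbf{R}^d)$ (not even up to $\varepsilon$-powers). The thick-cell property (f) gives a Lebesgue lower bound $m_d(\Omega_m^{(k+1)})\geq c(\varepsilon\rho_m^{(k+1)})^d$, hence control of $\sum_m(\rho_m^{(k+1)})^d$; since $s<d$, the quantity $\sum_m(\rho_m^{(k+1)})^s$ is on the \emph{wrong} side of this and can be arbitrarily large for a family of small balls of bounded overlap. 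The low-density bound (\ref{bottomlowdens}) only gives $\rho_m^s\gtrsim\mu(B_m)/(M^s\delta)$, again the useless direction. So the $T(1)$ route, as stated, does not close.

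However, the improvement you are chasing is illusory: the paper's own proof also only produces $\varepsilon^s$, not $\varepsilon$. Lemma \ref{osclem} applied with $\nu=\chi_{\Omega^{(k)}_{\ell}\setminus\Omega_j^{(k+1)}}\mu'$ and the bottom cover ball $B_j^{(k+1)}$ gives, via (\ref{bottomlowdens}) and (\ref{growth}),
\[
\text{osc}_{\Omega_j^{(k+1)}}R^{(k)}(\mu')\leq C_{24}\Bigl(\frac{M^s\delta}{\varepsilon^s}+\frac{1}{M}\Bigl),
\]
and this is what is carried through; the $\varepsilon$ (rather than $\varepsilon^s$) in the statement of (\ref{claim2}) is a typographical slip that is harmless downstream, since only the polynomial nature of the resulting constraint on $\delta$ matters in the proof of Theorem \ref{thm1}. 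So you should simply discard the $T(1)$ step: for $x\in\Omega_m^{(k+1)}$ the separation (\ref{suppsep}) forces $|y-x|\geq\varepsilon\rho_m^{(k+1)}$ on the domain of $R^{(k)}_{\mathrm{near}}(\mu')(x)$, whence $|R^{(k)}_{\mathrm{near}}(\mu')(x)|\leq\mu'(MB_m^{(k+1)})/(\varepsilon\rho_m^{(k+1)})^s\leq 2M^s\delta/\varepsilon^s$ by (\ref{bottomlowdens}). Feeding this pointwise bound into your Cauchy--Schwarz-in-$x$-then-in-$m$ scheme (with $\sum_m\mu'(\Omega_m^{(k+1)})\leq\mu'(\mathbf{R}^d)$) recovers exactly the paper's estimate.
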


The third proposition, which is the heart of the argument, concerns the size of each partial Riesz transform in $L^2(\mu')$.

\begin{prop}\label{prop3}  There exists a constant $K_2=K_2(s,d)>1$ such that if $\varepsilon$, $M$ and $\delta$ are chosen satisfying the inequalities
\begin{equation}\label{prop3cond}\frac{M^{2s}\delta}{\varepsilon^{d+s}}+\frac{1}{M} \leq\frac{\gamma^4\Delta^4}{K_2} \text{ and } \frac{M^s\delta}{\varepsilon^d}\leq1,
\end{equation}
then for each $k=0, \dots ,N-1$,
\begin{equation}\label{claim3}
\int_{\mathbf{R}^d}|R^{(k)}(\mu')|^2 d\mu' \geq \frac{1}{K_2}\cdot\gamma^4\Delta^4\mu'(\mathbf{R}^d).
\end{equation}
\end{prop}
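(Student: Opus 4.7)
My plan is to prove \eqref{claim3} by duality. By the Cauchy--Schwarz inequality, it suffices to construct a vector field $f$ on $\operatorname{supp}(\mu')$ with $\|f\|_{L^\infty(\mu')}\leq C(s,d)$ such that
\[
\Bigl|\int R^{(k)}(\mu')\cdot f\,d\mu'\Bigr|\geq c\,\gamma^2\Delta^2\mu'(\mathbf{R}^d),
\]
for then $\|R^{(k)}(\mu')\|_{L^2(\mu')}^2\geq |\langle R^{(k)}(\mu'),f\rangle_{\mu'}|^2/\|f\|_{L^2(\mu')}^2\gtrsim \gamma^4\Delta^4\mu'(\mathbf{R}^d)$. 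The task reduces to producing an $L^1$-type pairing bound with a bounded test field.

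To build $f$, I would exploit the Fourier identity $R^*(\psi\,m_d)=\varphi$ from Section \ref{fourier} together with the top-cover structure recorded in property (g) of Section \ref{topcover}. For each level-$(k+1)$ cell $\Omega_\ell^{(k+1)}\subset \Omega^{(k)}$, property (g) supplies an associated top-cover ball $T_\ell=B(z_\ell,r_\ell)$ contained in $\Omega^{(k)}$, with $r_\ell\geq M\rho_\ell^{(k+1)}$, $\mu(T_\ell)\geq \tfrac{\Delta}{2^s}r_\ell^s$, and $\operatorname{dist}(B_\ell^{(k+1)},\partial T_\ell)\geq r_\ell$. Let $(\varphi_\ell,\psi_\ell)$ be the translation-and-rescaling of $(\varphi,\psi)$ to scale $r_\ell$ about $z_\ell$; then $R^*(\psi_\ell\,m_d)=\varphi_\ell$ pointwise, and $\psi_\ell$ inherits the decay \eqref{psidecay}. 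The test field $f$ is built as a bounded piecewise-constant vector field on $\operatorname{supp}(\mu')$ that, on each $\Omega_\ell^{(k+1)}$, selects a unit vector aligned to make the Fourier-duality pairing (below) positive.

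The main computation proceeds as follows. Within each parent $\Omega^{(k)}$, the Fourier identity yields $\sum_\ell\int\varphi_\ell\,d\mu' = -\sum_\ell\int R(\mu')\cdot\psi_\ell\,dm_d$. I would then decompose $R(\mu')(x)=R^{(k)}(\mu')(x)+R(\mu'|_{\Omega^{(k+1)}(x)})(x)+R(\mu'|_{\mathbf{R}^d\setminus\Omega^{(k)}(x)})(x)$ and transfer each Lebesgue integral of $\psi_\ell$ into a $\mu'$-integral, exploiting the near-constancy of $\psi_\ell$ on each child cell (guaranteed by the $\varepsilon\rho_m^{(k+1)}$-safety margin of property (d)). The resulting main term is essentially $\sum_\ell \mu'(T_\ell)$; via the significant-mass property $\mu'(\mathbf{R}^d)\geq \gamma\mu(\mathbf{R}^d)/2$ together with $\mu(T_\ell)\gtrsim \Delta r_\ell^s$, this is bounded below by $\gtrsim \gamma\Delta\sum_\ell r_\ell^s$. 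A second Cauchy--Schwarz, used to convert this weighted-mass lower bound into a density lower bound for the $L^1$ pairing against $f$, produces the desired factor $\gamma^2\Delta^2\mu'(\mathbf{R}^d)$.

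The main obstacle is controlling the two parasitic terms introduced by the decomposition of $R(\mu')$. The \emph{inner error} from $R(\mu'|_{\Omega^{(k+1)}(x)})$ is bounded via the low-density estimate \eqref{bottomlowdens}, $\mu(MB_\ell^{(k+1)})\leq 2M^s\delta(\rho_\ell^{(k+1)})^s$, combined with the $\varepsilon\rho^{(k+1)}$-safety margin; a careful bookkeeping produces an error of order $M^{2s}\delta/\varepsilon^{d+s}$. The \emph{outer error} from $R(\mu'|_{\mathbf{R}^d\setminus\Omega^{(k)}(x)})$ is bounded using the decay \eqref{psidecay} of $\psi_\ell$ and the growth estimate \eqref{growth} for $\mu$, giving an error of order $1/M$. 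Hypothesis \eqref{prop3cond} is calibrated precisely so that both errors are smaller than the main term by at least a factor $\gamma^4\Delta^4/K_2$ and can therefore be absorbed, yielding the required lower bound.
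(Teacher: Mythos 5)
Your proposal captures one germ of the paper's argument---the Fourier pairing $R^{*}(\psi\,m_d)=\varphi$ against the top-cover geometry is indeed the engine behind the lower bound (it is the content of Lemma~\ref{Rieszlowbdlem} and the function $\Psi$ built from the $\varphi_{k,j}$'s)---but as a route to \eqref{claim3} it has several gaps that do not look like mere bookkeeping.

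First, the step where the $\gamma$-factor appears is wrong. You claim that $\sum_\ell\mu'(T_\ell)\gtrsim\gamma\Delta\sum_\ell r_\ell^s$ via the significant-mass property $\mu'(\mathbf{R}^d)\geq\gamma\mu(\mathbf{R}^d)/2$. But that property is a single global inequality; it does not imply $\mu'(T_\ell)\gtrsim\gamma\,\mu(T_\ell)$ ball by ball, and the ratio $m'/m=\mu'(\Omega)/\mu(\Omega)$ can vary wildly across the level-$n$ cells $\Omega$. The paper's per-cell lower bound is $\gtrsim\Delta^4(m'/m)^4\,m'$, and the $\gamma^4$ emerges only at the very end, after summing over cells and applying H\"older's inequality $\sum_j (m_j')^5/m_j^4\geq (\sum_j m_j')^5/(\sum_j m_j)^4$. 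Without some version of that summation--convexity step, the $\gamma$ cannot enter.

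Second, your decomposition $R(\mu')=R^{(k)}(\mu')+R(\mu'|_{\Omega^{(k+1)}(x)})+R(\mu'|_{\mathbf{R}^d\setminus\Omega^{(k)}(x)})$ only makes sense for $x\in\operatorname{supp}(\mu')$, since $\Omega^{(k)}(x),\Omega^{(k+1)}(x)$ are defined only there; but the pairing $\int R(\mu')\cdot\psi_\ell\,dm_d$ integrates over all of $\mathbf{R}^d$ (and $\psi_\ell$ has a long Lebesgue-supported tail, it is not concentrated on child cells). So the decomposition cannot be inserted into the Fourier pairing as written.

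Third, and most fundamentally, the transfer you invoke---from a Lebesgue-weighted estimate $\int |R(\cdot)|\,\Psi\,dm_d\gtrsim \Delta(m')^2/m$ to a $\mu'$-weighted estimate $\int|R^{(k)}(\mu')|\,d\mu'\gtrsim\cdots$---is precisely the hard part of the proposition, and ``near-constancy of $\psi_\ell$ on child cells'' does not do it. The paper achieves this transfer only by (a) replacing $\chi_\Omega\mu'$ with the smooth $\tilde\mu=\sum\varphi_j m_d$ and comparing via Lemma~\ref{osclem}; (b) running the extremal problem for the functional $F(\mathbf a)=\lambda m'\sup_j a_j+\int V(R(\mu^{\mathbf a}))\,d\mu^{\mathbf a}$ to obtain a pointwise variational inequality $V(R(\mu^\star))+R^*[\nabla V(R(\mu^\star))\mu^\star]\leq 6\lambda$ on $\operatorname{supp}(\mu^\star)$; and (c) promoting that inequality to all of $\mathbf{R}^d$ using the maximum principle (Proposition~\ref{maxp}), which is exactly where the restriction $s\in(d-1,d)$ is used. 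Only then can one integrate against $\Psi\,dm_d$ and close the argument via the $L^2$ bound $\int|R(\Psi m_d)|^2\,d\tilde\mu\leq C_{34}m'$ (Proposition~\ref{rphil2lem}). These components are not optional: they are what lets one beat the sign cancellation in $R^{(k)}(\mu')$ and convert a Lebesgue-side estimate into a $\mu'$-side one. Your direct-duality scheme, if it worked as written, would even yield the stronger bound $\gtrsim\gamma^2\Delta^2\mu'(\mathbf{R}^d)$ in place of $\gamma^4\Delta^4\mu'(\mathbf{R}^d)$; that improvement does not come for free, and the gap is exactly the missing variational/maximum-principle machinery.
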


Taking these three Propositions for granted for the time being, let us conclude the proof of Theorem \ref{thm1}.

\begin{proof}[Proof of Theorem \ref{thm1}]
Suppose that $\varepsilon$, $M$ and $\delta$ are chosen to satisfy $$\frac{M^s\delta}{\varepsilon}+\frac{1}{M} \leq \frac{\Delta^2\gamma^2}{4N K_1 \sqrt{K_2}}.$$
Then it follows from Proposition \ref{prop2} that
\begin{equation}\begin{split}\nonumber\int_{\mathbf{R}^d} \Bigl|\sum_{k=0}^{N-1}&R^{(k)}(\mu')\Bigl|^2 d\mu' \\
&\geq \sum_{k=0}^{N-1}||R^{(k)}(\mu')||_{L^2(\mu')}\cdot\Bigl(||R^{(k)}(\mu')||_{L^2(\mu')} - \frac{\gamma^2\Delta^2\sqrt{ \mu'(\mathbf{R}^d)}}{2\sqrt{K_2}}\Bigl).
\end{split}\end{equation}
Assuming the conditions (\ref{prop3cond}) are in force, applying Proposition \ref{prop3} yields $||R^{(k)}(\mu')||_{L^2(\mu')} -\frac{\gamma^2\Delta^2\sqrt{ \mu'(\mathbf{R}^d)}}{2\sqrt{K_2}}\geq \tfrac{1}{2}||R^{(k)}(\mu')||_{L^2(\mu')}$, and therefore
\begin{equation}\label{propcombine}
\int_{\mathbf{R}^d} \Bigl|\sum_{k=0}^{N-1}R^{(k)}(\mu')\Bigl|^2 d\mu' \geq \frac{N}{2K_2}\gamma^4\Delta^4 \mu'(\mathbf{R}^d).
\end{equation}
Put $N=\lfloor (8C_3 K_2)/(\Delta^4\gamma^4)\rfloor +1.$ If $\varepsilon$, $M$ and $\delta$ are chosen to satisfy $2M^s\delta/\varepsilon^s\leq \sqrt{C_3}$, then (\ref{propcombine}) is in contradiction with Proposition \ref{prop1}.  As a result, the assumption (\ref{levelsetfalse}) is false.   It remains to make a consistent choice of $\varepsilon, M$ and $\delta$, and consequently determine an admissible size of $T$.

Recall that (\ref{epscond}) is the only restriction on $\varepsilon$ in terms of $N$ only.  A suitable choice of $\varepsilon$ is therefore $\varepsilon = c \gamma\Delta/N = c \Delta^5\gamma^5$.   We now determine $M$, and subsequently  $\delta$, according to the following four conditions:
\begin{equation}\nonumber\begin{split}
&\frac{2M^s\delta}{\varepsilon^s}\leq \sqrt{C_3}, \;\; \frac{M^s\delta}{\varepsilon}+\frac{1}{M} \leq \frac{\Delta^2\gamma^2}{N K_1 \sqrt{K_2}},\\
&\frac{M^s\delta}{\varepsilon^d}\leq 1, \,\text{ and }\frac{M^{2s}\delta}{\varepsilon^{d+s}}+\frac{1}{M} \leq\frac{\gamma^4\Delta^4}{K_2}.
\end{split}\end{equation}
First pick $M$ subject to
\begin{equation}\label{Mcond}M\geq 2\max\Bigl(\frac{N K_1 \sqrt{K_2}}{\Delta^2\gamma^2},\frac{K_2}{\gamma^4\Delta^4}\Bigl).
\end{equation}
Then choose $\delta$ satisfying
\begin{equation}\label{deltacond}\delta \leq \frac{1}{2}\min\Bigl(\frac{\varepsilon^s\sqrt{C_3}}{M^s}, \frac{\varepsilon^d}{M^s}, \frac{\Delta^2\gamma^2\varepsilon}{M^sN K_1 \sqrt{K_2}},\frac{\gamma^4\Delta^4\varepsilon^{d+s}}{M^{2s}K_2}\Bigl).
\end{equation}

Since $N$ and $\varepsilon$ are power functions in $\Delta$ and $\gamma$, we can choose $M$ and $\delta$ to be power functions in $\Delta$ and $\gamma$ as well.  (A computation shows that we may choose $M = C\Delta^{-6}\gamma^{-6}$ and then $\delta = c\Delta^{4+5d+17s} \gamma^{4+5d+17s}$.)

As a result of (\ref{Tcond}),  we assert the existence of positive constants $\alpha = \alpha(s,d)$ and $C=C(s,d)$, such that (\ref{levelsetfalse}) must be false if
$T \geq \exp[(C\Delta^{-1}\gamma^{-1})^{1/\alpha}].$ Theorem \ref{thm1} follows.  
\end{proof}

We turn now to proving the propositions. Propositions \ref{prop1} and \ref{prop2} are quite simple to prove, but Proposition \ref{prop3} requires some work.

\subsection{Proof of Proposition \ref{prop1}} The $T(1)$-theorem (quoted as Theorem \ref{T1thm} in this paper) states that the operator $R^{\#}(\,\cdot\,\mu)$ is bounded in $L^2(d\mu)$, with operator norm at most $\sqrt{C_3}$.  Since $\sum_j \chi_{\Omega_j^{(N)}}\in L^2(\mu)$ with $L^2(\mu)$ norm equal to 
$\sqrt{\mu'(\mathbf{R}^d)}$, we deduce that
\begin{equation}\label{l2restbd}
\int_{\mathbf{R}^d}|R^{\#}(\mu')|^2d\mu' \leq \int_{\mathbf{R}^d}|R^{\#}(\mu')|^2d\mu \leq C_3 \mu'(\mathbf{R}^d).
\end{equation}Proposition \ref{prop1} is a simple consequence of (\ref{l2restbd}) along with the following lemma.
\begin{lem} For any $x\in \operatorname{supp}(\mu')$, the following inequality holds:
\begin{equation}\label{partialsequalmax}
 \Bigl|\sum_{k=0}^{N-1}R^{(k)}(\mu')(x)\Bigl| \leq R^{\#}(\mu')(x) + \frac{2M^s\delta}{\varepsilon^s}
 \end{equation}
\end{lem}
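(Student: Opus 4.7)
The plan is to observe that the sum $\sum_{k=0}^{N-1}R^{(k)}(\mu')(x)$ telescopes, so it reduces to a single truncated Riesz transform integral, and then to approximate the truncation by a ball, incurring only a small error controlled by the low density of the Cantor cells.

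First I would note that since $\Omega^{(k+1)}(x) \subset \Omega^{(k)}(x)$ for every $k$, the definition \eqref{partRiesz} immediately yields
\begin{equation*}
\sum_{k=0}^{N-1}R^{(k)}(\mu')(x) = \int_{\Omega^{(0)}(x) \setminus \Omega^{(N)}(x)} \frac{y-x}{|y-x|^{1+s}}\,d\mu'(y).
\end{equation*}
Because the Cantor construction starts from a single level zero triple $(\Omega_1^{(0)}, \widetilde{E}_1^{(0)}, B_1^{(0)})$ whose cell $\Omega_1^{(0)}$ contains all of $E$, and since $\operatorname{supp}(\mu') \subset F = \bigcup_j \Omega_j^{(N)} \subset \Omega_1^{(0)}$, we have $\Omega^{(0)}(x) \supset \operatorname{supp}(\mu')$ and the integration domain may be replaced by $\mathbf{R}^d \setminus \Omega^{(N)}(x)$.

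Next, let $B^{(N)} = B(x_0, \rho)$ be the bottom cover ball at level $N$ that gave birth to $\Omega = \Omega^{(N)}(x)$, and set $B = B(x, \rho)$. Recall $\Omega$ is the closed $\varepsilon\rho$-neighbourhood of $\widetilde{B}^{(N)} \subset (1-3\varepsilon)B^{(N)}$, so $\Omega \subset B(x_0, (1-2\varepsilon)\rho)$; since $x \in \Omega$, a triangle inequality gives $\Omega \subset B(x, 2(1-2\varepsilon)\rho) \subset 2B$, and similarly $2B \subset B(x_0, 3\rho) \subset M B^{(N)}$ (using that $M$ is chosen large, say $M \geq 3$). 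Writing
\begin{equation*}
\int_{\mathbf{R}^d \setminus \Omega} \frac{y-x}{|y-x|^{1+s}}\,d\mu'(y) = \int_{\mathbf{R}^d \setminus 2B} \frac{y-x}{|y-x|^{1+s}}\,d\mu'(y) + \int_{2B \setminus \Omega} \frac{y-x}{|y-x|^{1+s}}\,d\mu'(y),
\end{equation*}
the first integral is bounded by $R^{\#}(\mu')(x)$ since $x \in B$.

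For the second integral, the separation property \eqref{suppsep} forces every $y \in \operatorname{supp}(\mu') \setminus \Omega$ to satisfy $|y-x| \geq \varepsilon\rho$, while the low density property (e) together with $2B \subset MB^{(N)}$ gives $\mu'(2B) \leq \mu(MB^{(N)}) \leq 2M^s\delta\rho^s$. Hence
\begin{equation*}
\Bigl|\int_{2B \setminus \Omega} \frac{y-x}{|y-x|^{1+s}}\,d\mu'(y)\Bigl| \leq \frac{\mu'(2B)}{(\varepsilon\rho)^s} \leq \frac{2M^s\delta}{\varepsilon^s},
\end{equation*}
and combining the two pieces yields the desired inequality. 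The only place where anything could go wrong is the geometric bookkeeping between $\Omega$, $B$, and $MB^{(N)}$, so the main task is simply to verify carefully that the relevant inclusions hold under the standing assumption that $M$ is large and $\varepsilon$ is small.
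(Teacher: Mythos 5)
Your proof is correct and follows essentially the same strategy as the paper's: telescope the sum of partial Riesz transforms into a single truncated integral, pull out the maximal Riesz transform over the complement of a ball of radius comparable to $\rho_j$, and control the intermediate annulus by combining the separation property \eqref{suppsep} with the low-density estimate \eqref{bottomlowdens}. The only (cosmetic) deviation is that you take the ball $B(x,\rho)$ centred at $x$ itself rather than, as the paper does, the bottom-cover ball $B_j = B(x_j,\rho_j)$; both are admissible in the definition of $R^{\#}$ since $x$ lies in each, and the resulting bound is identical.
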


\begin{proof}  Suppose $\Omega^{(N)}(x) = \Omega_j^{(N)}$ for some $j$.  Consider the ball $B_j = B(x_j, \rho_j)$ in the bottom cover of the $N$th level that gave birth to $\Omega_j^{(N)}$.  Since $x\in \Omega^{(N)}_j$, it immediately follows that
$$\Bigl|\sum_{k=0}^{N-1}R^{(k)}(\mu')(x)\Bigl| \leq R^{\#}(\mu')(x) + \Bigl|\int_{2B_j\backslash \Omega^{(N)}(x)}\frac{y-x}{|y-x|^{1+s}} d\mu'(y)\Bigl|.
$$
In order to estimate the second integral, observe from (\ref{suppsep}) that the integrand is pointwise at most $1/(\varepsilon\rho_j)^s$.  Therefore, assuming $M>2$,
$$\Bigl|\int_{2B_j\backslash \Omega^{(N)}(x)}\frac{y-x}{|y-x|^{1+s}} d\mu'(y) \Bigl|\leq \mu'(2B_j)\frac{1}{(\varepsilon\rho_j)^s}\leq\frac{\mu(MB_j)}{(\varepsilon\rho_j)^s}.
$$
Appealing to (\ref{bottomlowdens}), we obtain the required estimate.
\end{proof}

To prove Proposition \ref{prop1}, we apply (\ref{partialsequalmax}) to obtain
$$\int_{\mathbf{R}^d}\Bigl|\sum_{k=0}^{N-1}R^{(k)}(\mu')\Bigl|^2 d\mu'\leq \int_{\mathbf{R}^d}\Bigl(R^{\#}(\mu')+ \frac{2M^s\delta}{\varepsilon^s}\Bigl)^2 d\mu'.
$$
Since $(a+b)^2\leq 2(a^2+b^2)$ for $a,b\in \mathbf{R}$, the desired inequality follows from (\ref{l2restbd}).

\subsection{Proof of Proposition \ref{prop2}}  We begin with a simple oscillation estimate.

\begin{lem}\label{osclem}
Let $\nu$ be a signed measure, and let $\Omega\subset B=B(z,\rho)$ be such that $\operatorname{dist}(\Omega, \operatorname{supp}(\nu))\geq\varepsilon\rho$.  Then,
\begin{equation}\label{oscstate}
\text{osc}_{\,\Omega}\,R(\nu)\leq \frac{2}{(\varepsilon\rho)^s}|\nu|(B(z, M\rho/3)) + \frac{C_{23}}{M}\sup_{r>0}\frac{|\nu|(B(z,r))}{r^s}.
\end{equation}
Also, if $\sigma$ is a signed measure supported on $\Omega$ such that $\sigma(\Omega)=0$, then
\begin{equation}\label{dualosc}
\Bigl|\int_{\mathbf{R}^d}|R(\sigma)|\,d\nu \Bigl|\leq \Bigl[\frac{2}{(\varepsilon\rho)^s}|\nu|(B(z, M\rho/3)) \!+\! \frac{C_{23}}{M}\sup_{r>0}\frac{|\nu|(B(z,r))}{r^s}\Bigl]|\sigma|(\Omega).
\end{equation}
\end{lem}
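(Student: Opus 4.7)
The plan is to prove both inequalities by a standard near/far decomposition of the Riesz kernel $K(v) = v/|v|^{1+s}$, with the cutoff radius set to $M\rho/3$. The near part will be controlled by the distance between $\Omega$ and $\operatorname{supp}\nu$; the far part will be controlled by the smoothness of $K$ combined with the growth condition on $|\nu|$. The second estimate then follows from the first by duality.

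For \eqref{oscstate}, fix $x, x' \in \Omega$ and write
\[
R(\nu)(x) - R(\nu)(x') = \int_{\mathbf{R}^d} [K(u-x) - K(u-x')]\,d\nu(u),
\]
and split the integral at $B_* := B(z, M\rho/3)$. On $B_*$, the hypothesis $\operatorname{dist}(\Omega, \operatorname{supp}\nu)\geq \varepsilon\rho$ gives $|K(u-x)|, |K(u-x')|\leq 1/(\varepsilon\rho)^s$ for $u\in\operatorname{supp}\nu$, which contributes the first term $2|\nu|(B_*)/(\varepsilon\rho)^s$. On the complement, where $|u-z|>M\rho/3$, the bound $|x-x'|\leq 2\rho$ together with the kernel estimate $|\nabla K(v)|\leq C/|v|^{1+s}$ (valid for $|v|\neq 0$) yields, for $M$ at least a fixed constant so that $|u-x|,|u-x'|\geq |u-z|/2$,
\[
|K(u-x)-K(u-x')|\leq \frac{C\rho}{|u-z|^{1+s}}.
\]
Setting $A := \sup_{r>0}|\nu|(B(z,r))/r^s$, a layer-cake (integration by parts) argument gives
\[
\int_{|u-z|>M\rho/3}\frac{d|\nu|(u)}{|u-z|^{1+s}}\leq \frac{C'A}{M\rho},
\]
since $s<1+s$ makes the integrand $t^{s}/t^{2+s}$ integrable on $[M\rho/3,\infty)$. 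Multiplying by $C\rho$ produces a far contribution bounded by $C_{23}A/M$, which combined with the near estimate gives \eqref{oscstate}.

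For \eqref{dualosc}, use Fubini to express the pairing as
\[
\int R(\sigma)\cdot d\nu = -\int R^{*}(\nu)\,d\sigma,
\]
where the adjoint $R^*$ has the same kernel structure up to sign. Since $\sigma(\Omega) = 0$ and $\operatorname{supp}\sigma \subset \Omega$, for any constant $c$ we may replace $R^*(\nu)$ by $R^*(\nu) - c$ in the integral against $\sigma$; choosing $c = R^*(\nu)(x_0)$ for a fixed $x_0\in\Omega$ gives
\[
\Bigl|\int R^{*}(\nu)\,d\sigma\Bigl|\leq \operatorname{osc}_{\Omega}R^*(\nu)\cdot |\sigma|(\Omega).
\]
The first part of the lemma, applied to $R^*$ (whose kernel differs only in sign), bounds $\operatorname{osc}_{\Omega}R^*(\nu)$ by exactly the bracketed expression in \eqref{dualosc}, finishing the proof.

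The main obstacle is only bookkeeping: absorbing the geometric factors (chiefly the requirement $M\gtrsim 1$ so that $|u-x|\sim |u-z|$ for $u$ outside $B_*$) into the universal constant $C_{23}$, and confirming that the layer-cake integral indeed produces the advertised $1/M$ gain. No deeper cancellation or Calder\'on–Zygmund machinery is needed, because the separation condition $\operatorname{dist}(\Omega,\operatorname{supp}\nu)\geq \varepsilon\rho$ lets us treat the near part crudely and the smoothness of $K$ handles the far part directly.
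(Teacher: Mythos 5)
Your proof of \eqref{oscstate} is correct and matches the paper's argument exactly: the same near/far split at $B(z,M\rho/3)$, the crude estimate $|K(u-x)-K(u-x')|\le 2/(\varepsilon\rho)^s$ on the near part, the gradient bound $|K(u-x)-K(u-x')|\le C\rho/|u-z|^{1+s}$ for $|u-z|>M\rho/3$ (with the condition $M\gtrsim 1$ making $|u-w|\ge |u-z|/2$ along the segment), and the layer-cake computation giving the $1/M$ gain. All fine.

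Your proof of \eqref{dualosc}, however, has a genuine gap in the duality step. The quantity on the left-hand side is $\bigl|\int |R(\sigma)|\,d\nu\bigr|$, where $|R(\sigma)|$ is the pointwise Euclidean norm of the vector field $R(\sigma)$. This is not a bilinear pairing in $(\sigma,\nu)$, so Fubini does not convert it into $\pm\int R^{*}(\nu)\,d\sigma$. Moreover the identity as you wrote it is a type error: $\nu$ is a scalar signed measure and $R(\sigma)$ is $\mathbf{R}^d$-valued, so $R(\sigma)\cdot d\nu$ is not a scalar, and $R^*(\nu)$ is undefined since the adjoint $R^*$ acts on vector-valued measures. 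Even if you introduce an auxiliary vector field $g$ with $|g|\le1$ and try to pass to $\int R(\sigma)\cdot g\,d|\nu| = -\int R^{*}(g|\nu|)\,d\sigma$, you cannot invoke ``the first part of the lemma applied to $R^*$'': part one bounds $\operatorname{osc}_\Omega R(\nu)$ for scalar $\nu$, whereas you would need the adjoint oscillation estimate for $R^*(g|\nu|)$, which is a separate (if analogous) statement.

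The clean fix, and what the paper actually does, is to exploit $\sigma(\Omega)=0$ pointwise rather than via duality. For fixed $x'\in\Omega$ and any $y\in\operatorname{supp}(\nu)$, write
\[
R(\sigma)(y) = \int_\Omega\bigl[K(y-x)-K(y-x')\bigr]\,d\sigma(x),
\]
which is legitimate precisely because $\int_\Omega K(y-x')\,d\sigma(x)=K(y-x')\,\sigma(\Omega)=0$. Taking the norm and bounding the kernel difference by your near/far estimates gives
\[
|R(\sigma)(y)| \le \frac{2|\sigma|(\Omega)}{(\varepsilon\rho)^s}\chi_{B(z,M\rho/3)}(y) + \frac{C\rho\,|\sigma|(\Omega)}{|z-y|^{1+s}}\chi_{\mathbf{R}^d\setminus B(z,M\rho/3)}(y).
\]
Integrating this pointwise bound against $|\nu|$ and using your layer-cake estimate yields $\int|R(\sigma)|\,d|\nu|$ bounded by the bracketed quantity times $|\sigma|(\Omega)$, which in particular dominates $\bigl|\int|R(\sigma)|\,d\nu\bigr|$.
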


\begin{proof}  For points $x, x'\in \Omega$, we wish to estimate the quantity $|R(\nu)(x) - R(\nu)(x')|$.  To this end, note that for each $y\in \operatorname{supp}(\nu)$, we have
\begin{equation}\label{Eosc1}
\Bigl|\frac{y-x}{|y-x|^{1+s}} - \frac{y-x'}{|y-x'|^{1+s}}\Bigl|\leq \frac{2}{(\varepsilon\rho)^s}.
\end{equation}
In addition, if $M\geq 6$, then
\begin{equation}\label{Eosc2}
\Bigl|\frac{y-x}{|y-x|^{1+s}} - \frac{y-x'}{|y-x'|^{1+s}}\Bigl|\leq C\frac{\rho}{|z-y|^{1+s}}
\end{equation}
for $y\in \mathbf{R}^d\backslash B(z, M\rho/3)$.

Integrating estimates (\ref{Eosc1}) and (\ref{Eosc2}) with respect to $|\nu|$ over the sets $B(z, M\rho/3)$ and $\mathbf{R}^d\backslash B(z, M\rho/3)$ respectively, we arrive at (\ref{oscstate}).

To prove (\ref{dualosc}), note that for any $x'\in \Omega$ and $y\in \mathbf{R}^d$, we have $$|R(\sigma)(y)| \leq \int_{\Omega}\Bigl|\frac{y-x}{|y-x|^{1+s}} - \frac{y-x'}{|y-x'|^{1+s}}\Bigl| \,d|\sigma|(x).$$  Using (\ref{Eosc1}) and (\ref{Eosc2}), we obtain
   $$|R(\sigma)(y)|\leq \frac{2|\sigma|(\Omega)}{(\varepsilon \rho)^s}\chi_{B(z, M\rho/3)}(y) +  \frac{C\rho|\sigma|(\Omega)}{|z-y|^{1+s}}\chi_{\mathbf{R}^d\backslash B(z, M\rho/3)}(y),$$  for any $y\in \operatorname{supp}(\nu)$.  Integrating this inequality over $|\nu|$, we arrive at (\ref{dualosc}).
\end{proof}

By inspection of the proof of Lemma \ref{osclem}, we obtain the following: if $\nu$, $\Omega$ and $B$ satisfy the assumptions of Lemma \ref{osclem}, and if $g$ is a bounded vector field, then
\begin{equation}\label{adjoscest}
\text{osc}_{\,\Omega}\,R^{*}(g\nu)\leq ||g||_{L^{\infty}}\Bigl[\frac{2}{(\varepsilon\rho)^s}|\nu|(B(z, M\rho/3)) + \frac{C_{23}}{M}\sup_{r>0}\frac{|\nu|(B(z,r))}{r^s}\Bigl].
\end{equation}

We turn now to the proof of Proposition \ref{prop2}.

\begin{proof}[Proof of Proposition \ref{prop2}]  On account of the first part of Lemma \ref{osclem}, we claim that
\begin{equation}\label{osccellest}
\text{osc}_{\Omega^{(k+1)}_j} R^{(k)}(\mu') \leq C_{24}\Bigl(\frac{M^s\delta}{\varepsilon^s}+\frac{1}{M}\Bigl).
\end{equation}
To see this, note that $\Omega_j^{(k+1)}\subset \Omega_{\ell}^{(k)}$ for some choice of $\ell$.  Let $\nu =  \chi_{\Omega_{\ell}^{(k)}\backslash \Omega_{j}^{(k+1)}}\cdot\mu'$.  Then for any $x\in \Omega_j^{(k+1)}$, we have $R(\nu)(x) = R^{(k)}(\mu')(x)$. Let $B=B(x_j, \rho_j)$ be the ball in the level $k+1$ bottom cover that gave birth to $\Omega_j^{(k+1)}$.  Then $\operatorname{dist}(\operatorname{supp}(\nu), \Omega_j^{(k+1)})\geq \varepsilon \rho_j$.  Applying Lemma \ref{osclem}, and estimating the right hand side of (\ref{oscstate}) with inequalities (\ref{bottomlowdens}) and (\ref{growth}) respectively, we get (\ref{osccellest}).

Now, fix $x\in \Omega^{(k+1)}_j\cap F$, and observe that $$\sum_{\ell=k+1}^{N-1}R^{(\ell)}(\mu')(x) = \int_{\Omega^{(k+1)}_j\backslash \Omega^{(N)}(x)} \frac{y-x}{|y-x|^{1+s}}d\mu'(y).$$
As the support of $\mu'$ is contained in $F$, we may write $\Omega_j^{(k+1)}\backslash \Omega^{(N)}(x)$ as the set of $y\in \Omega^{(k+1)}_j\cap F$ such that $\Omega^{(N)}(y)\neq \Omega^{(N)}(x)$.  Integrating over $x\in \Omega^{(k+1)}_j\cap F$ with respect to $\mu'$, we thereby obtain
\begin{equation}\label{meanzerotail}
\int_{\Omega_j^{(k+1)}}\!\sum_{\ell = k+1}^{N-1}\! R^{(\ell)}(\mu') d\mu' = \!\!\!\!\!\!\!\!\!\!\!\iint\limits_{\substack{(x,y) \,\in \,\Omega^{(k+1)}_j\times \Omega^{(k+1)}_j\!,\\ \Omega^{(N)}(x)\neq \Omega^{(N)}(y)}}\!\!\!\!\frac{y-x}{|y-x|^{1+s}}d\mu'(y)d\mu'(x)\!=\!0,
\end{equation}
since we are integrating an anti-symmetric function over a symmetric set.
Combining the oscillation estimate (\ref{osccellest}) with the mean zero property (\ref{meanzerotail}), we estimate
\begin{equation}\begin{split}
\Bigl|\int_{\Omega_j^{(k+1)}} &\Bigl( R^{(k)}(\mu'),  \sum_{\ell = k+1}^{N-1} R^{(\ell)}(\mu')\Bigl)d\mu'\Bigl|\\
&\leq C_{24}\Bigl(\frac{M^s\delta}{\varepsilon^s}+\frac{1}{M}\Bigl)\sum_{\ell= k+1}^{N-1}\int_{\Omega_j^{(k+1)}}|R^{(\ell)}(\mu')|d\mu'.
\end{split}\end{equation}
Summing these inequalities over $j$, we see that the estimate holds with the integration on the left and right hand sides taken over $\mathbf{R}^d$.  Applying the Cauchy-Schwarz inequality, we obtain (\ref{claim2}) with $K_1=C_{24}$.\end{proof}

We now turn to the proof of Proposition \ref{prop3}.   The proof follows \cite{ENV11}, but there are a couple of additional considerations needed to make the argument quantitative.  For the benefit of the reader we repeat the details, and so devote a full chapter to the proof.

\section{The proof of Proposition \ref{prop3}}\label{prop3sec}  For $n\in \{0, \dots, N-1\}$, consider a fixed Cantor cell $\Omega$ at level $n$. We shall set $m = \mu(\Omega)$ and $m'=\mu'(\Omega)$.   Let $\{\Omega_j\}_j$ denote the collection of those level $n+1$ cells that are contained in $\Omega$.  Each Cantor cell $\Omega_j$ is born out of a bottom cover ball $B_j$ of radius $\rho_j$.  We will work primarily within the cell $\Omega$, and then sum over all the level $n$ Cantor cells to prove Proposition \ref{prop3}.

It will be convenient to introduce a globally Lipschitz function $V(x)$, which behaves like $|x|^2$ for small values of $|x|$.  To this end, let $v\in C^{\infty}([0,\infty))$ be such that $v(0)=0$, $v'(0)=0$, $v''(t)=2$ for $t\in [0,1]$, $v''(t)$ is non-increasing in $t$, and $v''(t)=0$ for $t\geq 2$.  The function $v$ is convex, increasing, and satisfies $\min(t,t^2)\leq v(t)\leq t^2$ for all $t\in [0,\infty)$.

We will need a couple of additional consequences of the assumptions on $v$; namely,  $v'(t)^2\leq 4 v(t)$ and $v(at)\leq a^2v(t)$ for any $t>0$ and $a>1$.  To see these two inequalities, note that $v'(t) \geq tv''(t)$, as $v''(t)$ is non-increasing and $v'(0)=0$.  Integration of this inequality yields $v(t)\geq \int_0^t \tau v''(\tau)d\tau = t v'(t)-v(t)$, and thus $2v(t)\geq tv'(t)$ (or alternatively $(\log v(t))' \leq 2/t$).  Hence $4v(t)^2\geq t^2 v'(t)^2\geq v(t)v'(t)^2$, and the first inequality is proved.  Integrating $(\log v(\tau))' \leq 2/\tau$ between $\tau = t$ and $\tau = at$, we obtain the second inequality.

Now define $V(x) = v(|x|)$ for $x\in \mathbf{R}^d$. Then V is convex, and $\min(|x|, |x|^2)\leq V(x)\leq |x|^2$ for all $x\in \mathbf{R}^d$.  We also have $|\nabla V|\leq  \min(4, 2\sqrt{V})$, and $V(a|x|)\leq a^2 V(|x|)$ for all $a>1$ and $x\in \mathbf{R}^d$.

Our aim is to derive a lower bound for $\int_{\Omega}V(R^{(n)}(\mu')) d\mu'$.

We begin by showing that it suffices to work with a smooth approximation of $\mu'$.

\subsection{A smooth approximation of $\mu'$}  Recall that inside each Cantor cell $\Omega_j$, there is an open ball $\widetilde{\Omega}_j$ of radius $\varepsilon\rho_j$.  Define $\varphi_j\in C^{\infty}_0(\widetilde{\Omega}_j)$ so that
\begin{equation}\label{phijprops}\varphi_j\geq 0, \, \int_{\mathbf{R}^d} \varphi_j dm_d = \mu'(\Omega_j), \text{ and }||\varphi_j||_{L^{\infty}}\leq \frac{C_{25}\mu'(\Omega_j)}{(\varepsilon\rho_j)^d}.
\end{equation}
Let $\tilde{\mu} = \sum_j \tilde{\mu}_j$ where $\tilde\mu_j = \varphi_j m_d$.  By construction, $\tilde\mu(\mathbf{R}^d) = \mu'(\Omega)= m'$ and $\operatorname{supp}(\tilde\mu)\subset \Omega$.  The key properties of $\tilde{\mu}$ are contained in the following lemma.
\begin{lem}\label{smoothgrowthprop}The following two properties hold:

\indent (i)  Suppose $M^s\delta/\varepsilon^d\leq1$.  Then
\begin{equation}\label{anyballsm}
\tilde{\mu}(B(z,t)) \leq C_{26} t^s, \text{ for any ball } B(z,t).
\end{equation}
\indent (ii)  For a bottom cover ball $B_j$, one has
\begin{equation}\label{balljsm}
\tilde{\mu}\bigl(\tfrac{M}{3}B_j\bigl) \leq \frac{C_{27}M^{2s}\delta}{\varepsilon^d}\rho_j^s.
\end{equation}
\end{lem}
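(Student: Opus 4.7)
The plan is to decompose $\tilde\mu=\sum_k\tilde\mu_k$ and estimate each $\tilde\mu_k(E)$ using whichever is tighter of two bounds: the mass bound $\tilde\mu_k(E)\leq\mu'(\Omega_k)\leq 2M^s\delta\rho_k^s$ (combining $\mu'\leq\mu$ with the low-density property (\ref{bottomlowdens})), and the pointwise bound $\tilde\mu_k(E)\leq C\mu'(\Omega_k)m_d(E)/(\varepsilon\rho_k)^d$ from (\ref{phijprops}). The separation property $\operatorname{dist}(\Omega_k,\Omega_\ell)\geq\varepsilon\max(\rho_k,\rho_\ell)$ together with $\widetilde\Omega_k\subset\Omega_k$ being a ball of radius $\varepsilon\rho_k$ furnishes the requisite packing estimates.

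For (i), I would split cells with $\widetilde\Omega_k\cap B(z,t)\neq\varnothing$ into three scale regimes: \emph{small} ($\rho_k\leq t$), where $\Omega_k\subset B(z,3t)$ and the growth of $\mu$ directly controls the total by $\mu(B(z,3t))\leq Ct^s$; \emph{medium} ($t<\rho_k\leq t/\varepsilon$), where $\widetilde\Omega_k\subset B(z,3t)$ yields $\sum\rho_k^d\leq 3^dt^d/\varepsilon^d$ by disjointness, and converting powers using $\rho_k\geq t$ gives $\sum\rho_k^s\leq 3^dt^s/\varepsilon^d$, so the mass bound and the hypothesis $M^s\delta/\varepsilon^d\leq 1$ yield a total $\lesssim t^s$; and \emph{large} ($\rho_k>t/\varepsilon$), where separation forces pairwise distances $>t$ inside $B(z,t)$ hence at most $C_d$ cells, each estimated by the pointwise bound to yield $\lesssim (M^s\delta/\varepsilon^s)t^s\lesssim\varepsilon^{d-s}t^s\lesssim t^s$.

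For (ii), the key split is by whether $\rho_k\leq M\rho_j/3$ or not. If a cell with $\rho_k\leq M\rho_j/3$ meets $\tfrac{M}{3}B_j$, then $B_k\subset MB_j$, so disjointness and (\ref{bottomlowdens}) bound the combined contribution by $\mu(MB_j)\leq 2M^s\delta\rho_j^s$. For cells with $\rho_k>M\rho_j/3$, the separation bound $\operatorname{dist}(\Omega_k,\Omega_j)\geq\varepsilon\rho_k$ combined with $\operatorname{dist}(\Omega_k,\Omega_j)\leq M\rho_j/3+\rho_j$ forces $\rho_k\lesssim M\rho_j/\varepsilon$. I would dyadically decompose these by $\rho_k\in(2^iM\rho_j/3,2^{i+1}M\rho_j/3]$: the packing count in $B(x_j,M\rho_j/3)$ with separation scale $\varepsilon\rho_k$ gives at most $C(\varepsilon 2^i)^{-d}$ cells per scale, while the mass bound gives a per-cell contribution $\lesssim M^{2s}\delta\rho_j^s 2^{is}$, so scale $i$ contributes $\lesssim M^{2s}\delta\rho_j^s\varepsilon^{-d}\cdot 2^{i(s-d)}$. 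Summing over $i$ yields a convergent geometric series since $s<d$, giving the claimed bound.

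The main obstacle is precisely the dyadic refinement in part (ii): handling all cells with $\rho_k>M\rho_j/3$ via a single packing estimate at the maximal scale $\rho_k\asymp M\rho_j/\varepsilon$ would give $C/\varepsilon^d$ cells, each of mass $\lesssim M^{2s}\delta\rho_j^s/\varepsilon^s$, producing an unwanted factor $\varepsilon^{-(d+s)}$ and falling short of the target $\varepsilon^{-d}$. Coupling cell count to cell size scale by scale is what produces the convergent geometric series and the sharp $\varepsilon^{-d}$ dependence.
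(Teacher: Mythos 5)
Your argument is correct, but it is more involved than the paper's, which proves a single master inequality --- for every ball $B=B(z,t)$, $\tilde\mu(B) \leq \mu'(3B) + C\tfrac{M^s\delta}{\varepsilon^d}t^s$ --- and reads off both (i) and (ii) from it. In establishing the master inequality the paper splits the cells only into $\rho_j\leq t$ (your small regime, same argument) and $\rho_j>t$. For the latter it avoids cell counting entirely: it writes $\int_{B\cap\widetilde\Omega_j}\varphi_j\,dm_d \leq m_d(B\cap\widetilde\Omega_j)\,\|\varphi_j\|_{L^\infty} \leq C\, m_d(B\cap\widetilde\Omega_j)\,\tfrac{M^s\delta}{\varepsilon^d}\,\rho_j^{s-d}$, inserts $\rho_j^{s-d}\leq t^{s-d}$ (valid since $s<d$), and then sums, using that disjointness of the inner balls $\widetilde\Omega_j$ gives $\sum_j m_d(B\cap\widetilde\Omega_j)\leq m_d(B)=c_d t^d$ for free. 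That one step subsumes your medium and large regimes at once, and it also renders the dyadic decomposition in part (ii) unnecessary: there, (ii) is simply the master inequality evaluated at $B=\tfrac{M}{3}B_j$, with $\mu'(MB_j)$ controlled by (\ref{bottomlowdens}). The structural contrast is that the paper sums Lebesgue measure first (where disjointness of the $\widetilde\Omega_j$ yields the bound automatically) and substitutes the per-cell density estimate afterward, whereas you estimate each cell's mass first and then count cells --- which, as you correctly noticed at the end of your write-up, forces the scale-by-scale packing argument and a convergent geometric series to repair an $\varepsilon^{-s}$ loss that the paper's order of operations never incurs.
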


\begin{proof}  Fix a ball $B=B(z,t)$, and write
$$\tilde{\mu}(B) = \sum_{\rho_j\leq t} \tilde{\mu}(B\cap \Omega_j) +\tilde{\mu}\Bigl(\bigcup_{\rho_j>t}(B\cap \Omega_j)\Bigl).
$$
For each $j$ with $B\cap \Omega_j \neq \varnothing$ and $\rho_j\leq t$, the inclusion $\Omega_j\subset 3B$ holds.  Since the cells $\Omega_j$ are pairwise disjoint, we have
$$\sum_{\rho_j\leq t} \tilde{\mu}(B\cap \Omega_j) \leq \mu'(3B).
$$
To estimate the second term, note that for any cell $\widetilde{\Omega}_j$ with $\rho_j>t$, the $L^{\infty}$ estimate for $\varphi_j$ and the measure estimate (\ref{bottomlowdens}) yield
\begin{equation}\nonumber\begin{split}\int_{B\cap \widetilde{\Omega}_j}\varphi_j dm_d&\leq C_{25} m_d(B\cap \widetilde{\Omega}_j)\frac{\mu'(\Omega_j)}{(\varepsilon\rho_j)^d} \\
&\leq Cm_d(B\cap \widetilde{\Omega}_j)\frac{M^s\delta}{\varepsilon^d \rho_j^{d-s}}\leq Cm_d(B\cap \widetilde{\Omega}_j)\frac{M^s\delta}{\varepsilon^d t^{d-s}}.
\end{split}\end{equation}
After summation, we obtain
 $$\tilde{\mu}\Bigl(\bigcup_{\rho_j>t}B\cap \Omega_j\Bigl)\leq C\sum_jm_d(B\cap \widetilde{\Omega}_j)\frac{M^s\delta}{\varepsilon^d t^{d-s}} \leq C\frac{M^s\delta}{\varepsilon^d}t^s.
$$
Bringing our estimates together, we conclude that
$$\tilde{\mu}(B) \leq \mu'(3B) + \frac{CM^s\delta}{\varepsilon^d}t^{s}, \text{ for any ball } B=B(z,t).
$$
Part (i) is now an immediate consequence of the growth condition (\ref{growth}) (recall that $\mu'$ is dominated by $\mu$).  To prove part (ii), note that $\tilde{\mu}(\tfrac{M}{3}B_j) \leq \mu'(MB_j) + \tfrac{CM^s\delta}{\varepsilon^d}(\tfrac{M}{3}\rho_j)^{s}$.  Applying (\ref{bottomlowdens}) yields the required estimate. \end{proof}

The primary advantage of the smoothed measure $\tilde\mu$ is that each $\tilde{\mu}_j$ satisfies
\begin{equation}\label{linfsmooth}
||R(\tilde{\mu}_j)||_{L^{\infty}} \leq \sup_{x\in \mathbf{R}^d}\int_{\widetilde\Omega_j}\frac{\varphi_j(y)}{|y-x|^s}dm_d(y) \leq \frac{C\mu'(\Omega_j)}{(\varepsilon \rho_j)^s} \leq C_{28} \frac{M^s\delta}{\varepsilon^s},
\end{equation}
where (\ref{bottomlowdens}) has been used in the last inequality.  By an analogous argument, we see that if $g$ is a bounded vector field, then
\begin{equation}\label{adjsmooth}
||R^{*}(g\tilde\mu_j)||_{L^{\infty}}\leq C_{28}||g||_{L^{\infty}}\frac{M^s\delta}{\varepsilon^s}.
\end{equation}

\subsection{A comparison lemma} We wish to show that a lower bound for the Riesz transform of $\tilde{\mu}$ transfers to a lower bound for the partial Riesz transform of $\mu'$, with only a small error term.  This is achieved with a comparison lemma.

\begin{lem}  The following estimate holds:
\begin{equation}\label{lowtransfer}
\int_{\Omega}V(R^{(n)}(\mu')) d\mu' \geq \int_{\Omega} V(R(\tilde{\mu})) d\tilde{\mu} - C_{29}\Bigl(\frac{M^{2s}\delta}{\varepsilon^{d+s}} +\frac{1}{M}\Bigl)m'.
\end{equation}
\end{lem}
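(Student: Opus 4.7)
The plan is to localize the desired inequality to a single level-$(n{+}1)$ cell $\Omega_j\subset\Omega$ and then sum over $j$. For each such cell, set
\[
A_j\;:=\;\frac{1}{\mu'(\Omega_j)}\int_{\Omega_j}R^{(n)}(\mu')\,d\mu'.
\]
Since $V$ is convex and $\tilde\mu_j(\widetilde\Omega_j)=\mu'(\Omega_j)$, Jensen's inequality gives $V(A_j)\mu'(\Omega_j)\leq\int_{\Omega_j}V(R^{(n)}(\mu'))\,d\mu'$. The global bound $|\nabla V|\leq 4$ then reduces the claim to the pointwise estimate
\[
|R(\tilde\mu)(x)-A_j|\;\leq\;C\Bigl(\tfrac{M^{2s}\delta}{\varepsilon^{d+s}}+\tfrac{1}{M}\Bigr)\qquad\text{for every }x\in\widetilde\Omega_j,
\]
integrated against $\tilde\mu_j$ and summed over $j$ (using $\sum_j\mu'(\Omega_j)=m'$).

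To produce this pointwise bound, decompose $R(\tilde\mu)(x)=R(\tilde\mu_j)(x)+R(\tilde\mu-\tilde\mu_j)(x)$. By (\ref{linfsmooth}), the first summand is at most $C_{28}M^s\delta/\varepsilon^s$, which is absorbed in the budget since $\varepsilon^{d-s}\leq 1\leq M^s$. For the second, inserting the $\mu'$-average of $R(\tilde\mu-\tilde\mu_j)$ on $\Omega_j$ yields
\[
R(\tilde\mu-\tilde\mu_j)(x)-A_j\;=\;\bigl[R(\tilde\mu-\tilde\mu_j)(x)-\overline{R(\tilde\mu-\tilde\mu_j)}_{\Omega_j}\bigr]\;-\;\overline{R(\sigma)}_{\Omega_j},
\]
where $\sigma:=\chi_{\Omega\setminus\Omega_j}\mu'-(\tilde\mu-\tilde\mu_j)=\sum_{i\neq j}\sigma_i$, each $\sigma_i:=\chi_{\Omega_i}\mu'-\tilde\mu_i$ is a zero-mean signed measure on $\Omega_i$ with $|\sigma_i|(\Omega_i)\leq 2\mu'(\Omega_i)\leq 4M^s\delta\rho_i^s$, and $\overline{(\cdot)}_{\Omega_j}$ denotes the $\mu'$-average on $\Omega_j$. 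The first bracket is dominated by $\operatorname{osc}_{\Omega_j}R(\tilde\mu-\tilde\mu_j)$: applying Lemma~\ref{osclem}(i) with $B=B_j$ and $\nu=\tilde\mu-\tilde\mu_j$ (whose support is $\varepsilon\rho_j$-separated from $\Omega_j$ by the construction), the two summands in (\ref{oscstate}) are handled precisely by Lemma~\ref{smoothgrowthprop}: part~(ii) feeds the first via $\tilde\mu(\tfrac{M}{3}B_j)\leq CM^{2s}\delta\rho_j^s/\varepsilon^d$, and part~(i) feeds the second via the uniform growth $\tilde\mu(B(z,r))\leq C_{26}r^s$, producing the budget $C(M^{2s}\delta/\varepsilon^{d+s}+1/M)$.

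The remaining term $\overline{R(\sigma)}_{\Omega_j}$ is the main obstacle. By Fubini applied component-wise,
\[
\mu'(\Omega_j)\overline{R(\sigma)}_{\Omega_j}\;=\;-\sum_{i\neq j}\int R(\mu'|_{\Omega_j})(y)\,d\sigma_i(y),
\]
and the zero-mean property of each $\sigma_i$ converts each inner integral to an oscillation via $|\int R(\mu'|_{\Omega_j})\,d\sigma_i|\leq|\sigma_i|(\Omega_i)\,\operatorname{osc}_{\Omega_i}R(\mu'|_{\Omega_j})$, which I estimate by Lemma~\ref{osclem}(i) applied on $\Omega_i\subset B_i$ with $\nu=\mu'|_{\Omega_j}$. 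The principal technical challenge is to sum these estimates over $i\neq j$ so that the aggregate stays within $C(M^{2s}\delta/\varepsilon^{d+s}+1/M)\mu'(\Omega_j)$, rather than accumulating a factor in the number of cells or in the depth of the Cantor construction. I expect this to require an organisation by dyadic scales of $\rho_i$ and by whether $B(x_i,M\rho_i/3)$ meets $\Omega_j$ (``close'' vs.\ ``far'' cells), combined with the low-density estimate~(\ref{bottomlowdens}), the $s$-growth of $\mu'$, and the bounded Besicovitch overlap of the collection $\{B_i\}$ — the bookkeeping of the Cantor construction being tailored precisely to make this summation close at the stated rate.
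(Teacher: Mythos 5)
The plan through Jensen, the Lipschitz bound on $V$, and the decomposition $R(\tilde\mu)-A_j=R(\tilde\mu_j)+\bigl[R(\tilde\mu-\tilde\mu_j)-\overline{R(\tilde\mu-\tilde\mu_j)}_{\Omega_j}\bigr]-\overline{R(\sigma)}_{\Omega_j}$ are all sound and consistent with the tools in the paper (the first two pieces are handled exactly as you say). But the final step, where you assert you need the \emph{pointwise} bound $\sum_{i\neq j}|\sigma_i|(\Omega_i)\,\operatorname{osc}_{\Omega_i}R(\mu'|_{\Omega_j})\leq C\bigl(\tfrac{M^{2s}\delta}{\varepsilon^{d+s}}+\tfrac{1}{M}\bigr)\mu'(\Omega_j)$ for every fixed $j$, is a genuine gap: no amount of dyadic bookkeeping will close it, because it is false in general. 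Each application of Lemma~\ref{osclem} to $\nu=\mu'|_{\Omega_j}$ produces the factor $\tfrac{C_{23}}{M}\sup_r\tfrac{\mu'(\Omega_j\cap B(x_i,r))}{r^s}\leq\tfrac{C}{M}\cdot\tfrac{\mu'(\Omega_j)}{\operatorname{dist}(x_i,\Omega_j)^s}$, and $\sum_{i}\mu'(\Omega_i)/\operatorname{dist}(x_i,\Omega_j)^s$ is a truncated $s$-dimensional Riesz sum: grouping by dyadic annuli and using only the growth condition, each scale contributes $O(1)$, so the sum scales like the \emph{number of dyadic scales} between $\rho_j$ and $\operatorname{diam}\Omega$. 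That is unbounded, and it shows up precisely because you are aiming for a bound measured in units of $\mu'(\Omega_j)$ rather than $m'$.

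The cure is to give up the pointwise bound and observe that what you actually need is $\sum_j\mu'(\Omega_j)\,\bigl|\overline{R(\sigma^{(j)})}_{\Omega_j}\bigr|\leq C(\cdots)m'$. Since $\mu'(\Omega_j)\overline{R(\sigma^{(j)})}_{\Omega_j}=\int_{\Omega_j}R(\sigma^{(j)})\,d\mu'=-\sum_{i\neq j}\int R(\mu'|_{\Omega_j})\,d\sigma_i=\sum_{i\neq j}\int_{\Omega_j}R(\sigma_i)\,d\mu'$, you should sum the double sum in the \emph{other order}: for a fixed $i$, $\sum_{j\neq i}\int_{\Omega_j}|R(\sigma_i)|\,d\mu'=\int_{\Omega\setminus\Omega_i}|R(\sigma_i)|\,d\mu'$, which is controlled by the dual oscillation estimate (\ref{dualosc}) with $\sigma=\sigma_i$ and $\nu=\chi_{\Omega\setminus\Omega_i}\mu'$; now $\sup_r|\nu|(B(x_i,r))/r^s\leq C_1$ by (\ref{growth}), with no log loss, and the resulting bound $C\bigl(\tfrac{M^s\delta}{\varepsilon^s}+\tfrac1M\bigr)\mu'(\Omega_i)$ sums to $C(\cdots)m'$ over $i$. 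This is exactly the mechanism the paper uses, except packaged as three successive replacements ($\mu'\to\tilde\mu$ inside $R^{(n)}$, then $d\mu'\to d\tilde\mu$, then $R^{(n)}\to R$) rather than via Jensen at a single stroke. Either packaging is fine; what is essential is that the dual oscillation estimate must always see the \emph{whole} of $\chi_{\Omega\setminus\Omega_i}\mu'$ (or $\tilde\mu$) at once, not one cell at a time.
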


\begin{proof}  The proof is split into three comparisons.  We first claim that
\begin{equation}\nonumber
\int_{\Omega}\Bigl| V(R^{(n)}(\mu')) - V(R^{(n)}(\tilde\mu))\Bigl|d\mu'\leq C\Bigl(\frac{M^{s}\delta}{\varepsilon^s} + \frac{1}{M}\Bigl)m'.
\end{equation}
To see this, note that since $V$ has Lipschitz constant of at most $4$, it suffices to prove that
\begin{equation}\label{cfclaim1}\int_{\Omega}| R^{(n)}(\mu') - R^{(n)}(\tilde\mu)|d\mu'\leq C\Bigl(\frac{M^{s}\delta}{\varepsilon^s} + \frac{1}{M}\Bigl)m'.
\end{equation}
For a fixed $j$, let $\sigma_j = \chi_{\Omega_j}\mu' - \tilde{\mu}_j$.  Then $|\sigma_j|(\Omega_j)\leq 2\mu'(\Omega_j)$.  It is clear that $R^{(n)}(\sigma_j) = 0$ on $\Omega_j$.  Applying the second estimate in Lemma \ref{osclem} with $\sigma = \sigma_j$ and $\nu = \chi_{\Omega\backslash \Omega_j}\mu'$ yields
$$\int_{\Omega}|R^{(n)}(\sigma_j)|d\mu' \leq C\Bigl(\frac{\mu'(MB_j)}{(\varepsilon\rho_j)^s}+\frac{C_1}{M}\Bigl)\mu'(\Omega_j)\leq C\Bigl(\frac{M^{s}\delta}{\varepsilon^s}+\frac{1}{M}\Bigl)\mu'(\Omega_j),
$$
here we have used (\ref{bottomlowdens}) to estimate $\mu'(MB_j)$. Summing over $j$, we arrive at (\ref{cfclaim1}).

Next we claim that
\begin{equation}\label{cfclaim2}
\Bigl|\int_{\Omega} V(R^{(n)}(\tilde{\mu}))d\mu' - \int_{\Omega}V(R^{(n)}(\tilde\mu))d\tilde\mu\Bigl| \leq C\Bigl(\frac{M^{2s}\delta}{\varepsilon^{d+s}}+\frac{1}{M}\Bigl)m'.
\end{equation}
To this end, apply the first statement of Lemma \ref{osclem} with $\nu = \chi_{\Omega\backslash \Omega_j}\tilde\mu$.  From the growth properties of $\tilde\mu$ from Lemma \ref{smoothgrowthprop}, it follows that
$$\text{osc}_{\Omega_j}V(R^{(n)}(\tilde{\mu}))\leq 4\text{osc}_{\Omega_j}R^{(n)}(\tilde\mu)\leq C\Bigl(\frac{M^{2s}\delta}{\varepsilon^{d+s}}+\frac{1}{M}\Bigl).
$$
On the other hand, $\tilde\mu(\Omega_j) = \mu'(\Omega_j)$, and so we have
$$\Bigl|\int_{\Omega_j} (V(R^{(n)}(\tilde{\mu})) )d(\mu'-\tilde\mu)\Bigl| \leq 2\text{osc}_{\Omega_j}V(R^{(n)}(\tilde{\mu}))\mu'(\Omega_j)
.$$
Applying the oscillation estimate to the right hand side, we arrive at (\ref{cfclaim2}) after summation in $j$.

Finally, noting that $|R^{(n)}(\tilde\mu) - R(\tilde\mu)| = |R(\tilde\mu_j)|$ on $\Omega_j$, we use the Lipschitz property of $V$, combined with the $L^{\infty}$ estimate (\ref{linfsmooth}), to see that
\begin{equation}\label{cfclaim3}\nonumber
\int_{\Omega}|V(R^{(n)}(\tilde{\mu})) - V(R(\tilde\mu))|d\tilde\mu \leq 4\sum_j \int_{\Omega_j}|R(\tilde\mu_j)|d\tilde\mu \leq 4C_{28}\frac{M^s\delta}{\varepsilon^s}m'.
\end{equation}
Bringing together these three comparisons, we obtain the lemma.
\end{proof}

\subsection{The $\Psi$-function}Consider now the level $n+1$ top cover balls $T_j = B(z_j, 4r_j)$ that are contained in $\Omega$.  Let $\mathcal{J}$ be the set of $j\in \mathcal{J}$ such that $T_j \not\subset T_i$ for any $i\neq j$.   For each $j\in \mathcal{J}$, let $$\widetilde{T}_j =\Bigl\{\bigcup_k \Omega_k : \Omega_k\subset T_j,\, \Omega_k\not\subset T_i \text{ for any }i\in \mathcal{J} \text{ with }i<j\Bigl\}.$$   The sets $\widetilde{T}_j\subset T_j$ are disjoint, and $\bigcup_{j\in \mathcal{J}} \widetilde{T}_{j} \supset \operatorname{supp}(\mu')\cap \Omega$.  (Recall here that each cell $\Omega_k$ is associated to a top cover ball $T_j$ contained in $\Omega$, by property (g) of the Cantor construction.)

Recall the bump function $\varphi$ from Section \ref{fourier}.
For each $j\in \mathcal{J}$, and $k\geq 2$, let $\varphi_{k,j}(\, \cdot\,) = \varphi\bigl(\frac{\cdot\, - z_j}{2^{k-1} 4r_j}\bigl)$.  Then $\operatorname{supp}(\varphi_{k,j})\subset 2^k T_j$, and $\int_{\mathbf{R}^d}\varphi_{k,j} dm_d = m_d(2^k T_j)$.  We define the $\Psi$ function by
\begin{equation}\label{Psidef}\Psi(x) = \sum_{k\geq 2}2^{k(s-d)} \sum_{j\in \mathcal{J}} \frac{\mu'(\widetilde{T}_j)}{m_d(2^kT_j)}\varphi_{k,j}(x).
\end{equation}
Notice that
\begin{equation}\label{psiintbd}\int_{\mathbf{R}^d}\Psi dm_d = \sum_{k\geq2}2^{k(s-d)}\sum_{j\in \mathcal{J}} \mu'(\widetilde{T}_j) \leq C_{30} m'.
\end{equation}
The following two results contain the properties of $\Psi$ that we will need.  Recall that $m=\mu(\Omega)$ and $m' = \mu'(\Omega)$.
\begin{lem}\label{Rieszlowbdlem}  Let $\nu$ be a nonnegative Borel measure with smooth density such that $\nu(\mathbf{R}^d) \geq m'$.  Suppose in addition that $\nu$ is supported on $\cup_j \Omega_j$,  and $\nu(\widetilde{T}_j)\leq  2\mu'(\widetilde{T}_j)$ for each $j\in \mathcal{J}$.  Then the following estimate holds:
\begin{equation}\label{Vlowbdpsi}
\int_{\mathbf{R}^d}V(R(\nu))\Psi dm_d \geq c_{31}\frac{\Delta^2(m')^3}{m^2}.
\end{equation}
\end{lem}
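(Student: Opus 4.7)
The plan is to prove the lower bound by pairing $R(\nu)$ against a test vector field $\Sigma$ built from the Fourier-dual fields $\psi_{k,j}$ of Section~\ref{fourier}, combined with the convexity of $V$. The starting point is the elementary inequality
\begin{equation*}
V(a) \;\geq\; 2(a\cdot b) - |b|^2 \quad \text{for all } a\in\mathbf{R}^d \text{ and } |b|\leq 1,
\end{equation*}
which follows from the tangent line of $v$ at $\tau = |b|\in [0,1]$ (where $v(\tau)=\tau^2$) together with $|a|\,|b| \geq a\cdot b$. Applied pointwise with $b = \Sigma/\Psi$ for a vector field $\Sigma$ with $|\Sigma|\leq \Psi$, and then integrated against $\Psi\,dm_d$, this yields
\begin{equation*}
\int V(R(\nu))\Psi\,dm_d \;\geq\; 2 \int R(\nu)\cdot \Sigma\,dm_d \;-\; \int \frac{|\Sigma|^2}{\Psi}\,dm_d.
\end{equation*}
I will take $\Sigma = \alpha\sum_{j\in\mathcal{J}} \frac{\mu'(\widetilde T_j)}{r_j^s}\,\psi_{2,j}$, where $\psi_{2,j}(x) = (8r_j)^{s-d}\psi\bigl((x-z_j)/(8r_j)\bigr)$ is the rescaled translate of $\psi$ that satisfies $R^*(\psi_{2,j}\,m_d) = \varphi_{2,j}$ (by the scaling behaviour of (\ref{rieszfourier})--(\ref{fourequal})), and $\alpha>0$ is a small absolute constant to be chosen.

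The linear term is handled via the duality $\int R(\nu)\cdot g\,dm_d = \int R^*(g\,m_d)\,d\nu$ (readily derived from the definitions of $R$ and $R^*$) together with $\varphi_{2,j}\geq 1$ on $T_j\supset \widetilde T_j$, giving
\begin{equation*}
\int R(\nu)\cdot\Sigma\,dm_d \;=\; \alpha\sum_j\frac{\mu'(\widetilde T_j)}{r_j^s}\int \varphi_{2,j}\,d\nu \;\geq\; \alpha\sum_j \frac{\mu'(\widetilde T_j)\,\nu(\widetilde T_j)}{r_j^s}.
\end{equation*}
Cauchy--Schwarz in the form $\bigl(\sum_j \nu(\widetilde T_j)\bigr)^2 \leq \bigl(\sum_j \mu'\nu/r_j^s\bigr)\bigl(\sum_j r_j^s\nu/\mu'\bigr)$, combined with $\nu(\widetilde T_j)\leq 2\mu'(\widetilde T_j)$, the good scale condition $\mu(B(z_j,r_j))\geq \Delta r_j^s/2^s$, and the disjointness of $\{B(z_j,r_j)\}$ in $\Omega$, together produce $\sum_j r_j^s\leq 2^s m/\Delta$. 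Using $\sum_j \nu(\widetilde T_j)\geq m'$ this yields $\sum_j \mu'(\widetilde T_j)\,\nu(\widetilde T_j)/r_j^s \gtrsim \Delta(m')^2/m$, and hence $\int R(\nu)\cdot\Sigma\,dm_d \gtrsim \alpha\Delta(m')^2/m$.

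For the error term I aim to establish the stronger pointwise bound $|\Sigma|\leq C\alpha\Psi$, which immediately gives $|\Sigma|^2/\Psi \leq C^2\alpha^2\Psi$ and then $\int |\Sigma|^2/\Psi\,dm_d \leq C\alpha^2 m'$ by (\ref{psiintbd}). The pointwise control follows by matching, for each $j$, the decay $|\psi_{2,j}(x)| \lesssim r_j^d(r_j+|x-z_j|)^{s-2d}$ from (\ref{psidecay}) against the lower bound $\Psi(x)\gtrsim \mu'(\widetilde T_j)\,r_j^{d-s}(r_j+|x-z_j|)^{s-2d}$ provided by the $(k^*,j)$-summand of $\Psi$ with $2^{k^*}\sim 1+|x-z_j|/r_j$; summation in $j$ combines the individual bounds into $|\Sigma(x)|\leq C\alpha\Psi(x)$ with $C=C(s,d)$. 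Substituting all estimates and optimising the quadratic $2\alpha\cdot c\Delta(m')^2/m - C\alpha^2 m'$ over $\alpha\in(0,\alpha_0]$ yields the desired lower bound $c_{31}\Delta^2(m')^3/m^2$.

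The chief technical obstacle is the pointwise estimate $|\Sigma|\leq C\alpha\Psi$. The design of $\Psi$ is exactly what makes this work: the summation over all scales $k\geq 2$ (rather than only the one pertinent to $\psi_{2,j}$) supplies the slowly-decaying tail lower bound on $\Psi(x)$ that matches the tails of each $\psi_{2,j}(x)$, and the disjointness of the Vitali-selected inner balls $\{B(z_j,r_j)\}$ prevents an uncontrolled build-up of contributions from many indices $j$ at any single point $x$.
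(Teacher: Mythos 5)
Your proof is correct and takes essentially the same approach as the paper's: you dualize $R(\nu)$ against rescaled copies of $\psi$ (whose images under $R^*$ are bump functions $\geq 1$ on the $\widetilde T_j$), apply Cauchy--Schwarz together with $\sum_j r_j^s\leq 2^s m/\Delta$, and exploit the convexity of $v$ to pass from a linear to a quadratic bound; the paper merely organizes this by first proving the linear estimate $\int|R(\nu)|\Psi\,dm_d\gtrsim\Delta(m')^2/m$ and then applying the scalar tangent-line inequality $V(|x|)\geq\lambda|x|-\lambda^2$, where you fold the two steps together via the vectorial tangent inequality $V(a)\geq 2(a\cdot b)-|b|^2$ paired against $\Sigma$. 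One small remark: your final sentence attributes the pointwise bound $|\Sigma|\lesssim\alpha\Psi$ to the disjointness of $\{B(z_j,r_j)\}$, but that bound actually follows directly from picking, for each $j$, the single $(k^*(j,x),j)$-summand of $\Psi$ that matches the tail of $\psi_{2,j}$ at $x$ and summing these (nonnegative, distinct) terms over $j$; the disjointness of the $B(z_j,r_j)$ is used only in $\sum_j r_j^s\leq 2^s m/\Delta$.
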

\begin{proof}  We will first prove that
\begin{equation}\label{Rieszlowbd}
\int_{\mathbf{R}^d}|R(\nu)|\Psi dm_d \geq c \Delta\frac{(m')^2}{m}.
\end{equation}
Recall the definitions of $\varphi$ and $\psi$ from Section \ref{fourier}, and note the pointwise estimate
$\Psi(x) \geq c\sum_{j\in \mathcal{J}}\frac{\mu'(\widetilde{T}_j)}{r_j^d}\bigl|\psi\bigl(\frac{x-z_j}{4r_j}\bigl)\bigl|,$ along with the inequality
\begin{equation}\begin{split}\nonumber\int_{\mathbf{R}^d}|R(\nu)| \Bigl|\psi\Bigl(\frac{\cdot\,-z_j}{4r_j}\Bigl)\Bigl| dm_d& \geq \int_{\mathbf{R}^d}\Bigl(R(\nu), \psi\Bigl(\frac{\cdot\,-z_j}{4r_j}\Bigl)\Bigl)dm_d \\
&= \int_{\mathbf{R}^d}R^{*}\Bigl(\psi\Bigl(\frac{\cdot\,-z_j}{4r_j}\Bigl)m_d\Bigl)d\nu.\end{split}\end{equation}
Employing the equality $R^{*}(\psi(\tfrac{\cdot-z_j}{4r_j})m_d) = (4r_j)^{d-s}\varphi(\tfrac{\cdot-z_j}{4r_j})$ (see  (\ref{fourequal})), we deduce that
\begin{equation}\begin{split}\nonumber &\int_{\mathbf{R}^d}|R(\nu)|\Psi dm_d \geq c\sum_{j\in\mathcal{J}}\frac{\mu'(\widetilde T_j)}{r_j^d}\int_{\mathbf{R}^d}R^{*}\Bigl(\psi\Bigl(\frac{\cdot-z_j}{4r_j}\Bigl)m_d\Bigl) d\nu\\
&= c4^{d-s}\sum_{j\in \mathcal{J}} \frac{ \mu'(\widetilde{T}_j)}{r_j^s}\int_{\mathbf{R}^d}\varphi\Bigl(\frac{\cdot-z_j}{4r_j}\Bigl)d\nu \geq c\sum_{j\in \mathcal{J}} \frac{ \mu'(\widetilde{T}_j)\nu(\widetilde{T}_j)}{r_j^s}\geq c\sum_{j\in \mathcal{J}}\frac{\nu(\widetilde{T}_j)^2}{r_j^s}.
\end{split}\end{equation}
Since the pairwise disjoint balls $B(z_j, r_j)$ are contained in $\Omega$, and satisfy $\mu(B(z_j, r_j))\geq \frac{\Delta}{2^s} r_j^s$, we obtain $$\sum_j r_j^s \leq \frac{2^s}{\Delta}\sum_{j}\mu(B(z_j, r_j)) \leq \frac{2^s\mu(\Omega)}{\Delta} = \frac{2^sm}{\Delta}.$$
We therefore have
$$\int_{\mathbf{R}^d}|R(\nu)|\Psi dm_d\geq c\sum_j \frac{\nu(\widetilde{T_j})^2}{r_j^s} \geq  c\frac{\Delta(m')^2}{m},$$
where the Cauchy-Schwarz inequality has been used in the last step.  Hence (\ref{Rieszlowbd}) is proved.

Let $x\in \mathbf{R}^d$.  Then since $V(|x|)\geq \min(|x|, |x|^2)$, we see that $V(|x|)\geq \lambda |x| -\lambda^2$ for any $\lambda\in (0,1)$.\footnote{It is trivial that $\lambda|x|-\lambda^2\leq |x|$ for $\lambda \in (0,1)$.  Since $\lambda|x|\leq \tfrac{1}{2}|x|^2+\tfrac{1}{2}\lambda^2$, we also have $\lambda |x| -\lambda^2\leq |x|^2$.}  Hence, with $\lambda\in (0,1)$,
\begin{equation}\begin{split}\nonumber\int_{\mathbf{R}^d} V(R(\nu))\Psi dm_d&\geq \lambda \int_{\mathbf{R}^d}|R(\nu)|\Psi dm_d - \lambda^2\int_{\mathbf{R}^d}\Psi dm_d \\
&\geq c\lambda\frac{\Delta(m')^2}{m} - C_{30}\lambda^2m'.
\end{split}\end{equation}
 Since $\Delta\leq C_1$ and $m'\leq m$, we may pick $\lambda = \frac{c\Delta m'}{2C_{30}C_1m}$, and the result follows.
\end{proof}

The next result is an $L^2(\tilde\mu)$ bound for  $R(\Psi m_d)$.  

\begin{prop}\label{rphil2lem}
There exists a constant $C_{34}$ such that
\begin{equation}\label{rphil2}
\int_{\Omega}|R(\Psi m_d)|^2 d\tilde\mu \leq C_{34}  m'.
\end{equation}
\end{prop}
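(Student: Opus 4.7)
The plan is a duality-and-bootstrap argument. Setting $u = R(\Psi m_d)$ and viewing $u\,\tilde\mu$ as a vector-valued measure, the adjoint identity
\[
\int u\cdot R(\Psi m_d)\,d\tilde\mu = -\int \Psi\, R^*(u\,\tilde\mu)\,dm_d,
\]
combined with the Cauchy--Schwarz inequality with weight $\Psi$ and the bound $\int \Psi\,dm_d \leq C_{30}m'$ from \eqref{psiintbd}, gives
\[
\int |R(\Psi m_d)|^2\,d\tilde\mu \leq \sqrt{C_{30}m'}\,\Bigl(\int |R^*(u\,\tilde\mu)|^2\Psi\,dm_d\Bigr)^{1/2}.
\]
Consequently, the proposition will follow once I establish the auxiliary $L^2$-inequality
\begin{equation}\label{rphil2auxest}
\int |R^*(g\,\tilde\mu)|^2\Psi\,dm_d \leq C\int |g|^2\,d\tilde\mu
\end{equation}
for every vector-valued $g\in L^2(\tilde\mu)$: substituting $g=u$ into \eqref{rphil2auxest} and rearranging the self-bounding inequality yields the desired bound $\int |R(\Psi m_d)|^2\,d\tilde\mu \leq C_{30}Cm'$. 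A priori finiteness of the left-hand side is routine, since $\Psi$ is bounded with compact support and $\tilde\mu$ is a finite measure.

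To prove \eqref{rphil2auxest} I would exploit the explicit structure of $\Psi$. Decomposing $\Psi = \sum_{k\geq 2,\,j\in\mathcal{J}} c_{k,j}\varphi_{k,j}$ with $c_{k,j}=2^{k(s-d)}\mu'(\widetilde{T}_j)/m_d(2^kT_j)$ reduces \eqref{rphil2auxest} to bounding each piece $\int |R^*(g\,\tilde\mu)|^2\varphi_{k,j}\,dm_d$. Since $\varphi_{k,j}\leq 2^d$ is supported in $2^kT_j$, and $\tilde\mu$ has polynomial $s$-growth by Lemma~\ref{smoothgrowthprop}(i), a localized $T(1)$-type argument, in the spirit of Theorem~\ref{T1thm} and supplemented by the Cotlar-type oscillation estimates of Lemma~\ref{osclem} (to handle the tail contributions coming from the mass of $\tilde\mu$ outside $2^kT_j$), gives a bound of $\int_{2^kT_j}|R^*(g\,\tilde\mu)|^2\,dm_d$ by a constant multiple of $m_d(2^kT_j)$ times an average of $|g|^2$ over $\tilde\mu$ restricted to a comparable neighbourhood of $2^kT_j$. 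Summed against $c_{k,j}$, the geometric factor $2^{k(s-d)}<1$ absorbs the multiplicative losses across scales $k$, while the coefficient $\mu'(\widetilde{T}_j)$ present in $c_{k,j}$ telescopes against $\sum_{j\in\mathcal{J}}\mu'(\widetilde{T}_j)\leq m'$ (using only the disjointness of the $\widetilde{T}_j$'s in $\Omega$), so that the total stays $\leq C\|g\|^2_{L^2(\tilde\mu)}$ without picking up a $\Delta$-dependent factor.

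The main obstacle is precisely \eqref{rphil2auxest}. The $T(1)$-theorem as we have it (Theorem~\ref{T1thm}) controls $\|R^{\#}(\cdot\,\mu)\|_{L^2(\mu)}$, whereas here the outer integral is against the weighted Lebesgue measure $\Psi\,m_d$ while the inner measure is $\tilde\mu$; bridging these two different measures with a constant depending only on $s$ and $d$---and not on the construction parameters $M$, $\varepsilon$, $\delta$, $N$---is the delicate point. A useful device for this is the reproducing identity $\varphi_{k,j}=R_{k,j}^{s-d}R^*(\psi_{k,j}\,m_d)$, where $\psi_{k,j}(y):=\psi\bigl(\frac{y-z_j}{R_{k,j}}\bigr)$ with $R_{k,j}=2^{k+1}r_j$ and $\psi$ the vector field constructed in Section~\ref{fourier}; through the adjoint pairing $\int f\,R^*(\nu)\,dm_d = \int R(fm_d)\cdot d\nu$, this identity converts an integral against $\Psi\,m_d$ of $|R^*(g\,\tilde\mu)|^2$ into an integral of a Riesz transform of this controlled quantity against a rapidly decaying vector field, at which point the standard $L^2(\tilde\mu)$-theory of $R$ together with the growth properties of $\tilde\mu$ closes the argument.
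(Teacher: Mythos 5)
Your bootstrap framing is clean, but it does not buy you anything: by duality (take the adjoint of the map $g\mapsto R^*(g\,\tilde\mu)$), the auxiliary inequality \eqref{rphil2auxest} is \emph{equivalent} to the statement that $h\mapsto R(h\Psi\,m_d)$ is bounded from $L^2(\Psi m_d)$ to $L^2(\tilde\mu)$, and plugging $h\equiv 1$ into that recovers Proposition~\ref{rphil2lem} immediately. So the self-bounding rearrangement is logically valid but superfluous; you have simply replaced the target (the single test vector $h\equiv 1$) by a strictly stronger uniform claim over all $g$ (equivalently, all $h$). If you cannot prove \eqref{rphil2auxest}, the argument stalls; if you can, you did not need the bootstrap.

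The genuine gap is that \eqref{rphil2auxest} is a two-weight $L^2$ estimate: the outer integration is against $\Psi\,m_d$ while the inner measure is $\tilde\mu$. Theorem~\ref{T1thm} (and its adjoint version \eqref{T1adj}) gives only the single-weight bound $L^2(\mu)\to L^2(\mu)$; it does not give $L^2(\nu_1)\to L^2(\nu_2)$ for a pair of mutually singular measures satisfying $s$-growth. Two-weight $T(1)$ theory is a substantially harder subject (requiring testing conditions for \emph{both} the operator and its adjoint against both measures, plus additional side conditions), and nothing of that kind is established or cited in this paper. Your sketch --- ``a localized $T(1)$-type argument $\ldots$ supplemented by Cotlar-type oscillation estimates'' --- does not engage with this difficulty; the oscillation estimates of Lemma~\ref{osclem} control deviations of $R$ from cell to cell but do not bridge the two measures. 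The closing alternative via the reproducing identity $\varphi_{k,j}= (2^{k+1}r_j)^{s-d}R^*(\psi_{k,j}\,m_d)$ converts $\int|R^*(g\tilde\mu)|^2\varphi_{k,j}\,dm_d$ into a pairing of $R(|R^*(g\tilde\mu)|^2\,m_d)$ against $\psi_{k,j}$, but the Riesz transform of the nonnegative (non-linear) quantity $|R^*(g\tilde\mu)|^2$ is not something the $L^2(\tilde\mu)$-theory of $R$ controls, so this too does not close. The paper sidesteps the two-weight issue entirely: it fixes a single scale $\Psi_k$, uses an interchange lemma (Lemma~\ref{l2interchangelem}, built on the $g$-function Lemma~\ref{gfunklem}) to pass from $d\tilde\mu$ to $d\mu'$, compares $R(\Psi_k m_d)$ to the cut-off transform $\Theta=\sum_j\chi_{\mathbf{R}^d\setminus 2^{k+1}T_j}R(\chi_{\widetilde T_j}\mu')$ with pointwise error bounded by a tail of $g$-functions, and then bounds $\Theta$ in $L^2(\mu')$ by duality against the \emph{single-weight} adjoint $T(1)$ estimate \eqref{T1adj}. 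All the $L^2$ machinery runs against the one measure $\mu'$ (dominated by $\mu$), and the Lebesgue-weighted object $\Psi$ only ever appears inside the deterministic reproducing identity, never as an outer weight for a Calder\'on--Zygmund bound.
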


We begin with an auxiliary lemma.  For a fixed $A\geq 2$, define the Marcinkiewicz $g$-function by
$$g_A = \sum_{j\in \mathcal{J}}\frac{\mu'(\widetilde{T}_j)}{(Ar_j)^s}\chi_{AT_j}.
$$
\begin{lem}\label{gfunklem}  There exists a constant $C_{32}$, such that for any $A\geq 2$, we have
$$\int_{\Omega}g_A^2d\mu'\leq C_{32} m'.
$$
\end{lem}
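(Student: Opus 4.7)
My plan is to prove Lemma \ref{gfunklem} by a direct Schur-type argument on the double sum that arises from expanding $g_A^2$. The essential inputs are the disjointness of the sets $\widetilde{T}_j$, the $s$-dimensional growth of $\mu'$ (inherited from $\mu$ via Lemma \ref{neccond}, since $\mu'\leq\mu$), and a simple geometric comparison between the dilated top cover balls $AT_j$.

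First, I would square $g_A$ and integrate to obtain
\begin{equation*}
\int_{\Omega}g_A^2\, d\mu' = \sum_{i,j\in \mathcal{J}}\frac{\mu'(\widetilde{T}_i)\mu'(\widetilde{T}_j)}{(Ar_i)^s(Ar_j)^s}\,\mu'(AT_i\cap AT_j).
\end{equation*}
By symmetry I would restrict to pairs with $r_i\le r_j$ (paying only a factor of two and an obvious diagonal contribution). For such pairs I would apply two observations in turn: since $\mu'\le\mu$ and $AT_i = B(z_i,4Ar_i)$, the growth condition gives
\begin{equation*}
\mu'(AT_i\cap AT_j)\le \mu'(AT_i)\le C(Ar_i)^s;
\end{equation*}
and whenever $AT_i\cap AT_j\ne\varnothing$ with $r_i\le r_j$, then $|z_i-z_j|\le 4A(r_i+r_j)\le 8Ar_j$, so $T_i\subset B(z_j,12Ar_j)$.

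The second observation, together with the pairwise disjointness of the $\widetilde{T}_i$, yields
\begin{equation*}
\sum_{\substack{i:\, r_i\le r_j,\\ AT_i\cap AT_j\ne\varnothing}}\mu'(\widetilde{T}_i)\le \mu'\bigl(B(z_j,12Ar_j)\bigr)\le C(Ar_j)^s.
\end{equation*}
Plugging these estimates back and summing in $j$ makes the double sum telescope to
\begin{equation*}
\int_{\Omega}g_A^2\, d\mu' \le C\sum_{j\in\mathcal{J}}\frac{\mu'(\widetilde{T}_j)}{(Ar_j)^s}\cdot(Ar_j)^s = C\sum_{j\in \mathcal{J}}\mu'(\widetilde{T}_j)\le Cm',
\end{equation*}
where in the last step I use that the $\widetilde{T}_j$ are disjoint subsets of $\Omega$.

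I do not expect any serious obstacle here; the lemma really is a Schur test whose off-diagonal decay is encoded in the $s$-growth of $\mu'$. The only point requiring slight care is that the overlap of the balls $AT_j$ themselves is not bounded as $A\to\infty$, which is why a naive $L^2$ covering argument fails; my symmetrization forces the smaller ball $AT_i$ to absorb the factor $(Ar_i)^s$, and the growth at $z_j$ then controls the remaining sum over $i$.
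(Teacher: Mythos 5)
Your argument is correct, but it is a genuinely different route from the paper's. The paper proves the $L^2$ bound by duality: for nonnegative $f\in L^2(\chi_\Omega\mu')$ it bounds $\int_\Omega g_A f\,d\mu'$ by $\sum_j \mu'(\widetilde{T}_j)\inf_{\widetilde{T}_j}\mathcal{M}(f\chi_\Omega)$, where $\mathcal{M}$ is the modified (non-doubling) maximal operator $\mathcal{M}f(x)=\sup_{B\ni x}\tfrac{1}{\mu'(3B)}\int_B|f|\,d\mu'$; disjointness of the $\widetilde{T}_j$ and the known $L^2(\mu')$ boundedness of $\mathcal{M}$ for arbitrary Radon measures then give the claim. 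Your approach avoids any appeal to maximal-function theory: you expand the square, symmetrize to $r_i\le r_j$, and exploit the fact that when $AT_i\cap AT_j\neq\varnothing$ with $r_i\le r_j$ the growth bound gives $\mu'(AT_i\cap AT_j)\le C(Ar_i)^s$, which exactly cancels the $(Ar_i)^{-s}$ factor, and $\widetilde{T}_i\subset B(z_j,12Ar_j)$, so disjointness and the growth bound at $z_j$ close the sum in $i$. This Schur-test argument is more elementary and self-contained, using only the growth estimate (\ref{growth}) and the disjointness of the $\widetilde{T}_j$; the paper's maximal-function route is shorter on the page but outsources the work to the weak-type/Vitali machinery for non-doubling measures. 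Both correctly give a constant independent of $A$. The only small notational remark is that the intersection you should write is $\mu'(\Omega\cap AT_i\cap AT_j)$, but since you only need an upper bound this is harmless, as is the non-strict inequality in $|z_i-z_j|\le 4A(r_i+r_j)$.
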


Note that the constant here is independent of $A$.

\begin{proof}  From the growth bound (\ref{growth}), $\mu'(3AT_j)\leq C_1 3^s(Ar_j)^s$.  Therefore, for any nonnegative $f\in L^2(\chi_{\Omega}\mu')$, we have
\begin{equation}\begin{split}\nonumber\int_{\Omega}g_A f d\mu' &=\sum_{j\in \mathcal{J}}\mu'(\widetilde{T}_j)\frac{1}{(Ar_j)^s}\int_{AT_j\cap \Omega}fd\mu'\\
&\leq C_1 3^s\sum_{j\in \mathcal{J}}\mu'(\widetilde{T}_j)\frac{1}{\mu'(3AT_j)}\int_{AT_j\cap \Omega}fd\mu'.
\end{split}\end{equation}
Note that $$\mu'(\widetilde{T}_j)\frac{1}{\mu'(3AT_j)}\int_{AT_j\cap \Omega}fd\mu'\leq \mu'(\widetilde{T}_j)\inf_{\widetilde{T}_j}\mathcal{M}(f\chi_{\Omega}),$$ where $\mathcal{M}(f) = \sup\limits_{B:\, x\in B}\displaystyle\frac{1}{\mu'(3B)}\int_{B}|f|d\mu'$.
Since the sets $\widetilde{T}_j$ are disjoint, we observe that
$$ \sum_{j\in \mathcal{J}}\mu'(\widetilde{T}_j)\inf_{\widetilde{T}_j}\mathcal{M}(f\chi_{\Omega})\leq \int_{\Omega}\mathcal{M}(f\chi_{\Omega})d\mu'.$$

By the usual weak type argument involving the Vitali covering lemma, the maximal operator $\mathcal{M}$ is bounded in $L^2(\mu')$, with an operator norm not exceeding $C=C(d)>0$ (see for example \cite{NTV97}).  Applying the Cauchy-Schwarz inequality, we obtain
$$\int_{\Omega} g_{A} f d\mu' \leq C_13^sC \sqrt{m'}||f||_{L^2(\chi_{\Omega}\mu')}.
$$
The lemma now follows by appealing to duality in $L^2(\chi_{\Omega}\mu')$.
\end{proof}

Our next lemma is a comparison argument.  For a fixed $k\geq 2$, define $\Psi_k = \sum_{j\in \mathcal{J}} \frac{\mu'(\widetilde{T}_j)}{m_d(2^kT_j)}\varphi_{k,j}.$

\begin{lem}\label{l2interchangelem}  There exists a constant $C_{33}$ such that
\begin{equation}\label{l2interchange}
\int_{\Omega}|R(\Psi_k m_d)|^2d\tilde\mu \leq 2\int_{\Omega}|R(\Psi_k m_d)|^2d\mu'+ C_{33}m'.
\end{equation}
\end{lem}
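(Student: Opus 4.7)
Proof plan. The idea is to compare the two integrals cell by cell, exploiting that on each level $n+1$ cell $\Omega_j$, the measures $\tilde\mu_j = \varphi_j m_d$ and $\chi_{\Omega_j}\mu'$ carry the same total mass $\mu'(\Omega_j)$, so differences of integrals are controlled by oscillations. Set $f = R(\Psi_k m_d)$ and $\omega_j := \mathrm{osc}_{\Omega_j}(f)$. For each $j$ with $\mu'(\Omega_j) > 0$, the equal mass property gives
\[
\int_{\Omega_j}|f|^2 d\tilde\mu_j \leq \int_{\Omega_j}|f|^2 d\mu' + \mathrm{osc}_{\Omega_j}(|f|^2)\mu'(\Omega_j) \leq \int_{\Omega_j}|f|^2 d\mu' + 2\sup_{\Omega_j}|f|\,\omega_j\, \mu'(\Omega_j),
\]
while picking a point in $\Omega_j \cap \operatorname{supp}(\mu')$ where $|f|$ is no larger than its $L^2(\chi_{\Omega_j}\mu')$-average yields $\sup_{\Omega_j}|f|^2 \leq 2\mu'(\Omega_j)^{-1}\int_{\Omega_j}|f|^2 d\mu' + 2\omega_j^2$. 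Summing over $j$ and applying Cauchy--Schwarz followed by AM-GM to $\sum_j\sup_{\Omega_j}|f|\,\omega_j\,\mu'(\Omega_j)$, one obtains $\int_{\Omega}|f|^2 d\tilde\mu \leq 2\int_{\Omega}|f|^2 d\mu' + C\sum_j \omega_j^2 \mu'(\Omega_j)$. It thus suffices to prove $\sum_j \omega_j^2 \mu'(\Omega_j) \leq Cm'$.

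To bound the oscillation, we exploit that each component $\Psi_{k,j'}(y) = A_{j'}\varphi\bigl(\tfrac{y-z_{j'}}{2^{k+1}r_{j'}}\bigr)$ is a dilated translate of the fixed smooth bump $\varphi$. A change of variables gives
\[
R(\Psi_{k,j'}m_d)(x) = c_0\,\frac{\mu'(\widetilde T_{j'})}{(2^k r_{j'})^s}\,G\!\left(\frac{x-z_{j'}}{2^{k+1}r_{j'}}\right)
\]
for the single smooth vector field $G = R(\varphi m_d)$, which satisfies $|G(w)| \leq C(1+|w|)^{-s}$ and $|\nabla G(w)| \leq C(1+|w|)^{-s-1}$. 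Consequently, for any reference point $x_j \in \Omega_j$,
\[
\omega_j \leq C\min\!\Big(g^*(x_j),\ \rho_j\,h^*(x_j)\Big),
\]
where $g^*(x) = \sum_{j'\in\mathcal{J}}\tfrac{\mu'(\widetilde T_{j'})}{(2^k r_{j'}+|x-z_{j'}|)^s}$ and $h^*(x) = \sum_{j'\in\mathcal{J}}\tfrac{\mu'(\widetilde T_{j'})}{(2^k r_{j'}+|x-z_{j'}|)^{s+1}}$.

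The summation uses the Marcinkiewicz $g$-function. A dyadic annular decomposition around each $z_{j'}$ gives $g^*(x) \leq C\sum_{l\geq 0}2^{-ls}g_{2^{k+l}}(x)$, so by Cauchy--Schwarz in $l$ (using $\sum_l 2^{-ls}<\infty$) and Lemma \ref{gfunklem}, $\int_{\Omega}(g^*)^2 d\mu' \leq C m'$ with constant independent of $k$. A slight enlargement of the characteristic sets $AT_{j'}$ (replacing $g_A$ by its version on $3AT_{j'}$, whose $L^2(\mu')$ bound is the same up to constants) allows us to pass from pointwise evaluation to $\sup_{\Omega_j}$. Inserting this into $\sum_j \omega_j^2 \mu'(\Omega_j) \leq C\sum_j (g^*(x_j))^2 \mu'(\Omega_j)$ and choosing $x_j \in \Omega_j \cap \operatorname{supp}(\mu')$ yields the required bound $\sum_j \omega_j^2 \mu'(\Omega_j) \leq Cm'$, with $C_{33}$ independent of $k$. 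The main obstacle is this last step: one must balance the two bounds on $\omega_j$ (using the gradient bound with its factor $\rho_j$ when $\rho_j$ is small relative to $2^k r_{j'}$, and the uniform bound otherwise), and verify that the $g$-function machinery supplies an estimate whose constant is uniform in the scale parameter $k$.
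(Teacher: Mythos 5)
Your overall structure for the first half of the argument is sound and close to the paper's: one compares the two integrals cell by cell, exploits $\tilde\mu(\Omega_j)=\mu'(\Omega_j)$, and reduces matters to controlling $\sum_j \omega_j^2\mu'(\Omega_j)$ with $\omega_j=\operatorname{osc}_{\Omega_j}R(\Psi_km_d)$. (The paper does this a little more cleanly via the mean $c_f = \mu'(\Omega_j)^{-1}\int_{\Omega_j}f\,d\mu'$, getting the factor $2$ directly, but your version works.) Likewise, the pointwise bound $|\nabla R(\Psi_km_d)(x)|\rho_j\leq C\rho_j h^*(x)$ matches the paper's.

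The genuine gap is in the claimed estimate $\sum_j\omega_j^2\mu'(\Omega_j)\leq Cm'$. Your claimed dyadic decomposition $g^*(x)\leq C\sum_{l\geq 0}2^{-ls}g_{2^{k+l}}(x)$ is false: working through the annular decomposition one only gets $g^*(x)\leq C\sum_{l\geq 0}g_{2^{k+l}}(x)$ with \emph{no} decay in $l$. (For $|x-z_{j'}|\approx 2^{k+l}r_{j'}$ the corresponding term $\frac{\mu'(\widetilde T_{j'})}{(2^kr_{j'}+|x-z_{j'}|)^s}$ is comparable to $\frac{\mu'(\widetilde T_{j'})}{(2^{k+l}r_{j'})^s}$, not $2^{-ls}$ times it.) Since Lemma \ref{gfunklem} gives $\|g_A\|_{L^2(\mu')}\leq C\sqrt{m'}$ uniformly but without decay in $A$, an infinite sum of $g$-functions over $l$ is useless. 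The ``$\min$'' formulation also cannot rescue this, because it is a global choice between $g^*(x_j)$ and $\rho_jh^*(x_j)$ for each $j$, whereas what is needed is a \emph{term-by-term} split of the sum over $j'$ inside $h^*(x_j)$.

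What the paper actually does --- and what your proof is missing --- is to split $\mathcal{J}$, for each fixed $x\in B_\ell$, into $\mathcal{J}_1(x)=\{j':|x-z_{j'}|\leq 2^{k+1}r_{j'}\}$ and its complement $\mathcal{J}_2(x)$, and then to use two quite different mechanisms. For $j'\in\mathcal{J}_1(x)$ one uses property (g) of the Cantor construction: since $T_{j'}$ is not contained in the top cover ball $T_i$ associated to $\Omega_\ell$, and $B(x,r_i)\subset T_i$ with $r_i\geq M\rho_\ell$, one obtains $2^{k+2}r_{j'}\geq r_i\geq M\rho_\ell$, hence $\rho_\ell/(2^kr_{j'})\lesssim 1/M$. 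This converts the gradient bound into a bound by a \emph{single} $g$-function $g_{2^{k+2}}$ (up to the $1/M$ prefactor). For $j'\in\mathcal{J}_2(x)$, one shows $\operatorname{dist}(x,T_{j'})\geq\rho_\ell$, at which point $\sum_{j'\in\mathcal{J}_2}\frac{\mu'(\widetilde T_{j'})\rho_\ell}{|x-z_{j'}|^{s+1}}$ is dominated by $\int_{|x-y|>\rho_\ell}\frac{\rho_\ell}{|x-y|^{s+1}}d\mu'(y)\leq C$ via the growth bound --- an $O(1)$ constant, with no $g$-function at all. This yields $\operatorname{osc}_{\Omega_\ell}R(\Psi_km_d)\leq C\inf_{\Omega_\ell}g_{2^{k+2}}+C$, which after squaring, multiplying by $\mu'(\Omega_\ell)$, and summing over $\ell$ is controlled by Lemma \ref{gfunklem} applied at the single scale $A=2^{k+2}$. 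Your proof does not use property (g) anywhere, and without it the near-field part of the oscillation has no mechanism to beat the factor $\rho_\ell/(2^kr_{j'})$, so the argument does not close.
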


\begin{proof} Recall that each Cantor cell $\Omega_{\ell}$ is born out of a bottom cover ball $B_{\ell}$ of radius $\rho_{\ell}$, with $\Omega\supset B_{\ell}\supset \Omega_{\ell}$.  We shall estimate  $\sup_{B_{\ell}}|\nabla R(\Psi_k m_d)| \rho_{\ell}$.

For each bump function $\varphi_{k,j}$, observe the estimate $$|\nabla R(\varphi_{k,j}m_d)(x)|\leq \frac{C (2^kr_j)^d}{(2^k r_j + |x-z_j|)^{s+1}},  \;x\in \mathbf{R}^d.$$  For $x\notin 2^{k+1}T_j$, this estimate follows from differentiating the kernel in the Riesz transform.  If $x\in 2^{k+1}T_j$, we employ the convolution structure to differentiate the bump function $\varphi_{k,j}$, which has a gradient bound of $C/(2^k r_j)$.

We therefore obtain
$$|\nabla R(\Psi_k m_d)(x)| \rho_{\ell} \leq C\sum_{j\in \mathcal{J}}\frac{\mu'(\widetilde{T}_j)\rho_{\ell}}{(2^k r_j + |x-z_j|)^{s+1}}, \text{ for all }x\in \mathbf{R}^d.$$

Now fix $x\in B_{\ell}$, and split the index set $\mathcal{J}$ into two: $\mathcal{J}_1(x)  = \{j\in \mathcal{J}: |x-z_j|\leq 2^{k+1}r_j\}$, and $\mathcal{J}_2(x) =\mathcal{J}\backslash \mathcal{J}_1(x)$.

To bound the sum over $\mathcal{J}_1(x)$, we first claim that if $j\in \mathcal{J}_1(x)$, then $2^{k+1}r_j\geq M \rho_{\ell}/2$. To see this, recall that $B_{\ell}$ is associated to some top cover ball $T_{i}\supset B_{\ell}$, such that $\operatorname{dist}(B_{\ell}, \partial T_{i})\geq r_{i}\geq M\rho_{\ell}$ (see property (g) of the Cantor construction).  Since $T_j$ is not contained in $T_i$, we have $2 \cdot 2^{k+1}r_j\geq r_{i}\geq M\rho_{\ell}$, as required.  Employing this observation, we see that
$$\sum_{j\in \mathcal{J}_1(x)}\frac{\mu'(\widetilde{T}_j)\rho_{\ell}}{(2^k r_j+|x-c_j|)^{s+1}}\leq \frac{C}{M}\sum_{j\in \mathcal{J}_1(x)}\frac{\mu'(\widetilde{T}_j)}{(2^k r_j)^{s}}.
$$
Moreover, if $M\geq 4$, then $2^{k+2}T_j\supset B_{\ell}$ for any $j\in \mathcal{J}_1(x)$.  As a result, with $x\in B_{\ell}$ fixed, the function $\sum_{j\in \mathcal{J}_1(x)}\frac{\mu'(\widetilde{T}_j)}{(2^{k+2}r_j)^s}\chi_{2^{k+2}T_j}$ is constant on  $ B_{\ell}$, and is bounded by $\inf_{B_{\ell}}g_{2^{k+2}}$. We therefore conclude that
$$\sum_{j\in \mathcal{J}_1(x)}\frac{\mu'(\widetilde{T}_j)\rho_{\ell}}{(2^k r_j+|x-c_j|)^{s+1}}\leq C\inf_{ B_{\ell}}g_{2^{k+2}}.$$

Regarding the estimate for the sum over $\mathcal{J}_2(x)$ (with $x\in B_{\ell}$), we claim that for each $j\in \mathcal{J}_2(x)$, we have $|x-y|\geq \rho_{\ell}$ for all $y\in T_j$.  Indeed, if there exists $y\in T_j$ with $|x-y|<\rho_{\ell}$, then $$2\rho_{\ell}> 2(|x-z_j|-|y-z_j|)>2^{k+2}r_j -  8r_j\geq 2^{k+1}r_j\geq 8r_j.$$
Since $2B_{\ell}$ intersects $T_j$, and has radius greater than the diameter of $T_j$, we see that $4B_{\ell}\supset T_j$. Provided $M> 4$, property (g) of the Cantor construction ensures that the ball $T_j$ is a strict subset of the top cover ball associated to $\Omega_{\ell}$ (contradicting $j\in \mathcal{J}$).

Consequently, for any $x\in B_{\ell}$, we obtain
\begin{equation}\begin{split}\nonumber&\sum_{j\in \mathcal{J}_2(x)}\frac{\mu'(\widetilde{T}_j)\rho_{\ell}}{(2^k r_j + |x-z_j|)^{s+1}} \leq\int_{\mathbf{R}^d}\sum_{j\in \mathcal{J}_2(x)}\frac{\rho_{\ell}\chi_{\widetilde{T}_j}(y)}{|x-z_j|^{s+1}}d\mu'(y) \\
&\leq 2^{s+1}\int_{\mathbf{R}^d}\sum_{j\in \mathcal{J}_2(x)}\frac{\rho_{\ell}\chi_{\widetilde{T}_j}(y)}{|x-y|^{s+1}}d\mu'(y)\leq 2^{s+1}\int_{|x-y|> \rho_{\ell}}\frac{\rho_{\ell}}{|x-y|^{s+1}}d\mu'(y). 
\end{split}\end{equation}
Applying the growth condition on the measure $\mu'$ from (\ref{growth}), we see that this integral is bounded by an absolute constant $C$ depending on $s$ and $d$.  Bringing everything together yields
$$\sup_{B_{\ell}}|\nabla R(\Psi_k m_d)| \rho_{\ell}\leq C\inf_{z\in B_{\ell}}g_{2^{k+2}}(z) + C,$$
and hence we have $\text{osc}_{\Omega_{\ell}}R(\Psi_k m_d) \leq C\inf_{z\in \Omega_{\ell}}g_{2^{k+2}}(z) + C$.

To conclude the proof of the lemma, note that for a continuous function $f$, the following inequality holds
$$\int_{\Omega_{\ell}}|f|^2 d \tilde \mu \leq 2|c_f|^2\tilde\mu(\Omega_{\ell}) + 2\int_{\Omega_{\ell}}|f-c_f|^2 d\tilde\mu,
$$
where $c_f = \frac{1}{\mu'(\Omega_{\ell})}\!\int_{\Omega_{\ell}} fd\mu'$.  Since $\tilde\mu(\Omega_{\ell}) = \mu'(\Omega_{\ell})$, we have $|c_f|^2\tilde\mu(\Omega_{\ell})\leq \int_{\Omega_{\ell}}|f|^2 d\mu'.$  Consequently,  $\int_{\Omega_{\ell}}|f|^2 d \tilde \mu\leq 2\int_{\Omega_{\ell}}|f|^2 d\mu'+ 2\mu'(\Omega_{\ell})\text{osc}_{\Omega_{\ell}}(f)^2$. Applying the oscillation estimate for $R(\Psi_k m_d)$, we see that
$$\int_{\Omega_{\ell}}|R(\Psi_k m_d)|^2d\tilde\mu \leq 2\!\int_{\Omega_{\ell}}|R(\Psi_k m_d)|^2d\mu'+ \!C\!\mu'(\Omega_{\ell})\inf_{\Omega_{\ell}}(g_{2^{k+2}})^2+C\mu'(\Omega_{\ell}).
$$
Since Lemma \ref{gfunklem} yields $ \sum_{\ell}\mu'(\Omega_{\ell})\inf_{\Omega_{\ell}}(g_{2^{k+2}})^2\leq \int_{\Omega} g_{2^{k+2}}^2 d\mu'\leq Cm',$ we arrive at (\ref{l2interchange}) after summation in $\ell$.
\end{proof}

We turn now to the proof of Proposition \ref{rphil2lem}:
\begin{proof}[Proof of Proposition \ref{rphil2lem}]   To obtain the $L^2(\tilde\mu)$ estimate for $\Psi$, it suffices to prove an analogous estimate with $\Psi$ replaced by $\Psi_k$, with a constant independent of $k$.  On account of Lemma \ref{l2interchangelem}, the proposition will follow once we assert that
\begin{equation}\label{onekest}
\int_{\Omega} |R(\Psi_k m_d )|^2 d\mu' \leq Cm',
\end{equation}
with the constant $C$ independent of $k$.  To prove (\ref{onekest}), we shall compare $R(\Psi_k m_d)$ with the vector field $\Theta = \sum_{j\in \mathcal{J}} \chi_{\mathbf{R}^d\backslash 2^{k+1}T_j}R(\chi_{\widetilde{T}_j}\mu').$
To this end, we first claim that
\begin{equation}\label{thetal2}\int_{\Omega} |\Theta|^2 d\mu' \leq Cm'.
\end{equation}
Indeed, for any vector field $g$ in $L^2(\chi_{\Omega}\mu')$, note that
$\bigl|\int_{\Omega} \Theta \cdot g d\mu'\bigl|$ is equal to $$\Bigl|\int_{\Omega}\sum_{j\in \mathcal{J}} \chi_{\widetilde{T}_j}(y)\!\Bigl[\int_{\mathbf{R}^d\backslash 2\cdot 2^{k}T_j}\!\frac{y-x}{|y-x|^{1+s}}\cdot g(x)\chi_{\Omega}(x)d\mu'(x)\Bigl]d\mu'(y)\Bigl|.
$$
Since the sets $\widetilde{T}_j$ are disjoint we see that  $$\Bigl|\int_{\Omega} \Theta \cdot g d\mu'\Bigl|\leq\int_{\Omega}(R^*)^{\#}(g\chi_{\Omega} \mu')d\mu'.$$  As $\mu'$ is dominated by $\mu$, the mapping $g\mapsto (R^*)^{\#}(g\mu')$ is bounded on $L^2(\mu')$, with operator norm at most $\sqrt{dC_3}$, see (\ref{T1adj}).  The Cauchy-Schwarz inequality now yields $\int_{\Omega}(R^*)^{\#}(g\chi_{\Omega} \mu')d\mu' \leq \sqrt{m'}\sqrt{d C_3}||g||_{L^2(\chi_{\Omega}\mu')}$.  Appealing to duality in vector valued $L^2(\chi_{\Omega}\mu')$, we obtain (\ref{thetal2}).

To estimate $|R(\Psi_k m_d) - \Theta|$ pointwise, examine the difference
\begin{equation}\label{1jdiff}\Bigl|R\Bigl(\frac{\mu'(\widetilde{T}_j)}{m_d(2^kT_j)}\phi_{k,j} m_d \Bigl)(x)-\chi_{\mathbf{R}^d\backslash 2^{k+1}T_j}R(\chi_{\widetilde{T}_j}\mu')(x)\Bigl|.
\end{equation}

If $x\in 2^{k+1}T_j$, then the second term does not contribute.  Crudely estimating the first term, we can bound the difference in this case by
$C\frac{\mu'(\widetilde{T}_j)}{(2^{k}r_j)^s}.$

In the case when $x\not\in 2^{k+1}T_j$, note that $\nu = \frac{\mu'(\widetilde{T}_j)}{m_d(2^kT_j)}\phi_{k,j}m_d - \chi_{\widetilde{T}_j}\mu'$ has mean zero.  Since the distance between $x$ and $\operatorname{supp}(\nu)$ is comparable to $|x-z_j|$, with $z_j$ the centre of $T_j$, we derive the estimate $|R(\nu)(x)| \leq C\frac{\mu'(\widetilde{T}_j)2^kr_j}{|x-z_j|^{s+1}}.$

Combining these two estimates, we see that the difference in (\ref{1jdiff}) is bounded by
$C\sum_{\ell\geq k}2^{k-\ell}\frac{\mu'(\widetilde{T}_j)}{(2^{\ell}r_j)^s}\chi_{2^{\ell}T_j}.$
After the summation in $j$, we have
$$|R(\Psi_k m_d) - \Theta|\leq C\sum_{\ell \geq k}2^{k-\ell}g_{2^{\ell}}.
$$
Since $||g_{2^{\ell}}||_{L^2(\chi_{\Omega}\mu')}\leq C\sqrt{m'}$,  with a constant $C$ independent of $\ell$, we have $||R(\Psi_k m_d)-\Theta||_{L^2(\chi_{\Omega}\mu')}\leq C\sqrt{m'}$, and (\ref{onekest}) follows.
\end{proof}


\subsection{An extremal problem}  With a view to obtaining a contradiction, assume that
\begin{equation}\label{supposesmall}\int_{\mathbf{R}^d}V(R(\tilde{\mu}))d\tilde\mu \leq \lambda \mu'(\Omega) = \lambda m'.
\end{equation}
We will obtain a contradiction if $\lambda>0$ is chosen small enough.  To this end, we will replace $\tilde{\mu}$ by an energy minimizing measure.  This  idea is reminiscent of the idea of equilibrium measure in potential theory.

For a vector $\textbf{a} = \{a_j\}_j$ with $a_j\geq 0$ for all $j$, define the measure $\mu^{\bf{a}}$ by $\mu^{\bf{a}} = \sum_j a_j \tilde{\mu}_j$, with $\tilde \mu_j$ as in (\ref{phijprops}).  By construction, $\operatorname{supp}(\mu^{\bf{a}})\subset \bigcup_j \widetilde{\Omega}_j$ for any choice of $\mathbf{a}$.   Note that the vector $\mathbf{a}$ is of finite dimension, since there are a finite number of Cantor cells $\Omega_j$.  Consider now the functional $F(\mathbf{a})$, given by
$$F(\mathbf{a}) = \lambda m'\cdot \sup_j a_j + \int_{\mathbf{R}^d}V(R({\mu}^{\bf{a}}))d\mu^{\bf{a}}.
$$
The reasoning behind the definition of $F$ is the following.  The second term is precisely the energy that we wish to minimize.  The inclusion of the first term is to prevent the extremal measure from being much larger than $\tilde\mu$ on any cell $\Omega_j$.

Let $\bf{a}^{\star}$ be the minimizer for $F$ under the constraint $\mu^{\bf{a}}(\mathbf{R}^d) = m'$. That a minimizer should exist is easy to see; firstly, since $\tilde\mu(\mathbf{R}^d) = m'$, the vector $\bf{a}=\bf{1}$ is admissible; and secondly, the functional $F(\bf{a})$ is continuous in $\textbf{a}$ and grows to infinity as any component of  $\textbf{a}$ tends to infinity.  For notational ease we let $\mu^{\star} = \mu^{\bf{a}^{\star}}$.   Note that $F(\textbf{a}^{\star})\leq F(\textbf{1})\leq 2\lambda m'$, and hence $\mu^{\star}\leq 2\tilde\mu$.

In order to obtain information from the minimizer, one can examine the first variation of the functional $F$ under a distortion of $\mu^{\star}$.  This examination yields the following lemma.
\begin{lem}\label{firstvar}  For each $j$ with $a_j^{\star}>0$, there exists a point $w\in \widetilde{\Omega}_j$ such that
\begin{equation}\label{firstvarst}V(R(\mu^{\star}))(w) + R^*[\nabla V(R(\mu^{\star}))\mu^{\star}](w) \leq 6\lambda.
\end{equation}
\end{lem}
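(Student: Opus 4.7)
The plan is to obtain the pointwise bound at some $w\in\widetilde{\Omega}_j$ by computing the first variation of $F$ at the minimizer $\mathbf{a}^\star$ along a carefully chosen mass-preserving one-parameter family, combining the resulting inequality with an adjoint identity, and finally converting the integral average into a pointwise statement by continuity. Throughout let $a_{\max}^\star = \sup_k a_k^\star$.

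Two preliminary bounds drive the argument. First, from the hypothesis (\ref{supposesmall}) and $\mu^{\mathbf{1}}=\tilde{\mu}$ we get $F(\mathbf{a}^\star)\leq F(\mathbf{1})\leq 2\lambda m'$, so $a_{\max}^\star\leq 2$ and, since $F(\mathbf{a}^\star)=\lambda m' a_{\max}^\star+\int V(R(\mu^\star))\,d\mu^\star$, we also obtain
$$\int_\Omega V(R(\mu^\star))\,d\mu^\star\leq \lambda m'(2-a_{\max}^\star).$$
Second, the calibration inequality $tv'(t)\leq 2v(t)$ yields $\nabla V(x)\cdot x\leq 2V(x)$ pointwise, and Fubini gives $\int R^\ast[\nabla V(R(\mu^\star))\mu^\star]\,d\mu^\star=\int \nabla V(R(\mu^\star))\cdot R(\mu^\star)\,d\mu^\star$, so
$$\int_\Omega [V(R(\mu^\star))+R^\ast[\nabla V(R(\mu^\star))\mu^\star]]\,d\mu^\star\leq 3\int V(R(\mu^\star))\,d\mu^\star\leq 3\lambda m'(2-a_{\max}^\star).$$

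For the variation, fix $j$ with $a_j^\star>0$, and assume $a_j^\star\mu'(\Omega_j)<m'$ (otherwise $\mu^\star=\tilde\mu_j$ and the conclusion follows directly from the two displays above applied on $\widetilde{\Omega}_j$). For small $t>0$ define $\mathbf{a}(t)$ by $a_k(t)=(1+t)a_k^\star$ for $k\neq j$ and $a_j(t)=a_j^\star-t(m'-a_j^\star\mu'(\Omega_j))/\mu'(\Omega_j)$. This keeps mass fixed and yields
$$\mu^{\mathbf{a}(t)}=(1+t)\mu^\star-t\,\frac{m'}{\mu'(\Omega_j)}\,\tilde{\mu}_j.$$
Since every $a_k(t)\leq (1+t)a_{\max}^\star$, the supremum term increases by at most $\lambda m' t a_{\max}^\star+o(t)$.

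To conclude, compute the first variation of the integral term using the adjoint identity $\int R^\ast[g\mu^\star]\,d\nu=\int g\cdot R(\nu)\,d\mu^\star$:
$$\frac{d}{dt}\bigg|_{t=0}\!\!\int V(R(\mu^{\mathbf{a}(t)}))\,d\mu^{\mathbf{a}(t)}=\int_\Omega \bigl[V(R(\mu^\star))+R^\ast[\nabla V(R(\mu^\star))\mu^\star]\bigr]\,d\Bigl(\mu^\star-\tfrac{m'}{\mu'(\Omega_j)}\tilde{\mu}_j\Bigr).$$
Minimality, $\lim_{t\to 0^+}[F(\mathbf{a}(t))-F(\mathbf{a}^\star)]/t\geq 0$, together with the two preliminary bounds, gives after dividing by $m'$
$$\frac{1}{\mu'(\Omega_j)}\int_{\widetilde{\Omega}_j}[V(R(\mu^\star))+R^\ast[\nabla V(R(\mu^\star))\mu^\star]]\,d\tilde{\mu}_j\leq \lambda a_{\max}^\star+3\lambda(2-a_{\max}^\star)=6\lambda-2\lambda a_{\max}^\star\leq 6\lambda.$$
Since $\tilde{\mu}_j=\varphi_j\,m_d$ is a positive measure supported inside $\widetilde{\Omega}_j$ and the integrand is a continuous function (both $R(\mu^\star)$ and $R^\ast[\nabla V(R(\mu^\star))\mu^\star]$ are continuous, being Riesz and adjoint Riesz transforms of bounded-density measures), this average bound produces the required point $w\in\operatorname{supp}(\tilde{\mu}_j)\subset\widetilde{\Omega}_j$.

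The main obstacle is handling the non-smooth term $\lambda m'\sup_k a_k$ in $F$. We bypass the subdifferential calculus by taking a one-sided variation in which every $a_k$ with $k\neq j$ is scaled up uniformly by the same $(1+t)$, guaranteeing the simple pointwise upper bound $\sup_k a_k(t)\leq (1+t)a_{\max}^\star$; the remarkable cancellation between $\lambda a_{\max}^\star$ and $3\lambda(2-a_{\max}^\star)$ leaves us with the clean constant $6\lambda$ independently of $a_{\max}^\star$.
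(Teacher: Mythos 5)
Your argument is correct and follows essentially the same route as the paper: both compute the first variation of $F$ at $\mathbf{a}^{\star}$ along a mass-preserving direction that decreases $a_j$, and then extract a point from the resulting average bound over $\tilde{\mu}_j$. The only difference is cosmetic — the paper renormalizes multiplicatively and invokes $V(a|x|)\leq a^2V(|x|)$ (with $a=\tilde\mu(\mathbf{R}^d)/(\tilde\mu(\mathbf{R}^d)-t\tilde\mu_j(\mathbf{R}^d))$), whereas you renormalize additively and make the calibration $\nabla V\cdot x\leq 2V$ and the a priori bound $F(\mathbf{a}^\star)\leq 2\lambda m'$ explicit; both are incarnations of the same inequality $tv'(t)\leq 2v(t)$ and the same first-order calculation.
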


\begin{proof}Fix $j$ with $a_j^{\star}>0$.  We shall estimate the functional $F$ evaluated at the vector
$$ \textbf{b} = \frac{\tilde\mu(\mathbf{R}^d)}{\tilde{\mu}(\mathbf{R}^d) - t\tilde{\mu}_j(\mathbf{R}^d)}(\textbf{a}^{\star} - t\textbf{e}_j),$$
where $\textbf{e}_j$ is the vector whose $j$th component is $1$, and all other components are zero.
Note here that $\mathbf{b}$ is an admissible vector provided $0<t<a_j^{\star}$.  First observe that
$$F(\textbf{a}^{\star} - t\textbf{e}_j) \leq F(\textbf{a}^{\star}) -tI+O(t^2), \text{ as }t\rightarrow 0^{+},
$$
with $I$ denoting the quantity
$$I = \int_{\mathbf{R}^d} V(R(\mu^{\star})) d\tilde\mu_j+\int_{\mathbf{R}^d}(\nabla V(R(\mu^{\star})), R(\tilde\mu_j)) d\mu^{\star}.
$$
Since $V(a|x|)\leq a^2V(|x|)$ for all $a>1$, the normalization in the definition of $\textbf{b}$ can increase the value of the functional $F$ by a factor of at most $\tilde\mu(\mathbf{R}^d)^3/(\tilde{\mu}(\mathbf{R}^d) - t\tilde{\mu}_j(\mathbf{R}^d))^3$.  We therefore obtain
$$F(\textbf{a}^{\star}) \leq F(\textbf{b})\leq \frac{\tilde\mu(\mathbf{R}^d)^3}{(\tilde{\mu}(\mathbf{R}^d) - t\tilde{\mu}_j(\mathbf{R}^d))^3} (F(\textbf{a}^{\star}) -tI) +O(t^2).
$$
The first inequality here is just the minimization property of $\textbf{a}^{\star}$.
Comparing first order terms, and taking the limit as $t\rightarrow 0^+$, we arrive at
\begin{equation}\label{Ismall}I \leq 3F(\textbf{a}^{\star})\frac{\tilde\mu_j(\mathbf{R}^d)}{\tilde\mu(\mathbf{R}^d)}\leq 6\lambda \tilde{\mu}_j(\mathbf{R}^d).\end{equation}
To deduce (\ref{firstvar}), we re-write $I$ as an integral over $\tilde\mu_j$:
$$I = \int_{\mathbf{R}^d} \Bigl(V(R(\mu^{\star})) + R^*[\nabla V(R(\mu^{\star})) \mu^{\star}]\Bigl) d\tilde\mu_j.
$$
Due to (\ref{Ismall}), we conclude that $V(R(\mu^{\star})) + R^*[\nabla V(R(\mu^{\star}))\mu^{\star}] \leq 6\lambda$ on average, with respect to $\tilde\mu_j$.  Since $\operatorname{supp}(\tilde \mu_j)\subset \widetilde{\Omega}_j$, there must exist $w\in \widetilde{\Omega}_j$ satisfying (\ref{firstvarst}). \end{proof}

Next we shall strengthen (\ref{firstvarst}) to a uniform estimate on $\widetilde{\Omega}_j$. To do this, we will obtain some oscillation estimates.  Since $\mu^{\star}\leq 2\tilde\mu$, Lemma \ref{smoothgrowthprop} provides us with growth estimates for the measure $\mu^{\star}$.   Using these growth properties, an application of the first part of Lemma \ref{osclem} with $\nu = \chi_{\mathbf{R}^d\backslash \Omega_j}\mu^{\star}$ yields
$$\text{osc}_{\Omega_j}R(\chi_{\mathbf{R}^d\backslash \Omega_j}\mu^{\star})\leq C\Bigl(\frac{M^{2s}\delta}{\varepsilon^{d+s}}+\frac{1}{M}\Bigl).$$
Since $|\nabla V(R(\mu^{\star}))|\leq 4$, the adjoint oscillation estimate (\ref{adjoscest}), applied with $g= \nabla V(R(\mu^{\star}))$, yields
$$\text{osc}_{\Omega_j}R^*[\nabla V(R(\mu^{\star}))\chi_{\mathbf{R}^d\backslash \Omega_j}\mu^{\star}]\leq C\Bigl(\frac{M^{2s}\delta}{\varepsilon^{d+s}}+\frac{1}{M}\Bigl).
$$

On the other hand, recalling the $L^{\infty}(m_d)$ estimate for $R(\tilde\mu_j)$ from (\ref{linfsmooth}),  we deduce that
$$\text{osc}_{\Omega_j}R(\chi_{ \Omega_j}\mu^{\star})\leq 2||R(\chi_{ \Omega_j}\mu^{\star})||_{L^{\infty}} \leq C\Bigl(\frac{M^{s}\delta}{\varepsilon^{s}}\Bigl).
$$
Similarly, applying the adjoint $L^{\infty}$ estimate (\ref{adjsmooth}) with $g=\nabla V(R(\mu^{\star}))$, we see that
$$\text{osc}_{\Omega_j}R^*[\nabla V(R(\mu^{\star})) \chi_{\Omega_j}\mu^{\star}] \leq C\Bigl(\frac{M^{s}\delta}{\varepsilon^{s}}\Bigl).
$$

Now note that
$$\text{osc}_{\Omega_j}V(R(\mu^{\star}))\leq 4\text{osc}_{\Omega_j}R(\mu^{\star}) \leq 4\text{osc}_{\Omega_j}R(\chi_{\mathbf{R}^d\backslash \Omega_j}\mu^{\star})+ 4\text{osc}_{\Omega_j}R(\chi_{ \Omega_j}\mu^{\star}),$$
and hence when combined with Lemma \ref{firstvar}, these four oscillation estimates yield
\begin{equation}\label{smallonsupport}
V(R(\mu^{\star}))(w) + R^*[\nabla V(R(\mu^{\star}))\mu^{\star}](w) \!\leq\! 6\lambda  + C_{35}\Bigl(\frac{M^{2s}\delta}{\varepsilon^{d+s}}+\frac{1}{M}\Bigl),
\end{equation}
for all $w\in \widetilde{\Omega}_j$.  Since $j$ was arbitrary, (\ref{smallonsupport}) holds for all $w\in \operatorname{supp}(\mu^{\star})$.

To extend the estimate (\ref{smallonsupport}) to the whole space $\mathbf{R}^d$, we appeal to the following maximum principle, which can be found in Section 17 of \cite{ENV11}.
\begin{prop}\cite{ENV11}. \label{maxp} Let $s\in (d-1,d)$.  Suppose $\omega$ is a measure with a smooth compactly supported density with respect to $m_2$, and suppose $g$ is a smooth vector field.  Then
\begin{equation}\begin{split}\nonumber
\max_{\mathbf{R}^d}\bigl[V(R(\omega))+R^{*}(&g \omega)\bigl] \\&= \max_{\operatorname{supp}(\omega)}\bigl[V(R(\omega))+R^{*}(g \omega)\bigl] ,
\end{split}\end{equation}
provided the left hand side is positive.
\end{prop}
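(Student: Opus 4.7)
\emph{Strategy.} The function $\Phi := V(R(\omega)) + R^*(g\omega)$ is smooth on $\mathbf{R}^d$ (since $\omega$ has a smooth compactly supported density, and the Riesz kernel is locally integrable for $s<d$), and tends to zero at infinity. Hence its positive maximum is attained at some $x^*\in \mathbf{R}^d$. My plan is to suppose, for contradiction, that $x^*\notin \operatorname{supp}(\omega)$ and then extract a contradiction from the interior-maximum conditions $\nabla\Phi(x^*)=0$ and $\Delta\Phi(x^*)\le 0$.

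\emph{Scalar potential representation.} The identity $\nabla_x|y-x|^{1-s}=(s-1)(y-x)/|y-x|^{s+1}$ exhibits the Riesz transform as a gradient: $R(\omega)=(s-1)^{-1}\nabla U$, where $U(x):=\int|y-x|^{1-s}d\omega(y)$. The radial Laplacian formula $\Delta |z|^\alpha=\alpha(\alpha+d-2)|z|^{\alpha-2}$ yields
\begin{equation}\nonumber
\Delta U(x)=(s-1)(s-d+1)\int|y-x|^{-s-1}d\omega(y),
\end{equation}
which is \emph{strictly} positive on $\mathbf{R}^d\setminus\operatorname{supp}(\omega)$, since the restriction $s\in(d-1,d)$ makes both prefactors positive. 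Analogously, writing $R^*(g\omega)=-(s-1)^{-1}\operatorname{div} \mathbf{W}$ with $\mathbf{W}_j(x)=\int g_j(y)|y-x|^{1-s}d\omega(y)$ gives a closed-form expression for $\Delta R^*(g\omega)$ as a sum of divergences of Riesz-type integrals of $g\omega$. This is the only point in the argument where the assumption $s>d-1$ is used, and it is used essentially: it is precisely what makes $U$ strictly subharmonic off the support of $\omega$.

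\emph{Contradiction via $\Delta\Phi(x^*)$.} Write $F=R(\omega)$ and $A=DF=(s-1)^{-1}D^2U$, a symmetric matrix. The chain rule gives
\begin{equation}\nonumber
\Delta V(F)=\operatorname{tr}\bigl(D^2V(F)\,A^2\bigr)+\nabla V(F)\cdot \Delta F.
\end{equation}
Convexity of $V$ forces the first summand to be nonnegative, and the Cauchy--Schwarz bound $|A|_F^2\ge(\operatorname{tr} A)^2/d=(\Delta U)^2/(d(s-1)^2)$ makes it strictly positive with an explicit lower bound coming from the positivity of $\Delta U$. For the cross term $\nabla V(F)\cdot\Delta F$, one substitutes the critical-point equation $\nabla V(F)(x^*)=-\nabla R^*(g\omega)(x^*)$ and uses the explicit formulas above for $\Delta F=(s-1)^{-1}\nabla\Delta U$ and $\Delta R^*(g\omega)$ to rewrite this contribution in terms of paired Riesz-type integrals of $\omega$ and $g\omega$. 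The aim is to combine everything into a manifestly positive expression, contradicting $\Delta\Phi(x^*)\le 0$.

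\emph{Main obstacle.} The delicate step is signing the cross terms. The quantity $\nabla V(F)\cdot\Delta F$ is a dot product of two distinct Riesz-type integrals against $\omega$, and $\Delta R^*(g\omega)$ is a signed integral against $g\omega$; neither has a favorable sign a priori. The plan is to use the critical-point relation to eliminate $\nabla V(F)$, and then to show that what remains is dominated, in absolute value, by the principal positive contribution $\operatorname{tr}(D^2V(F)\,A^2)$ coming from convexity and the strict subharmonicity of $U$. If this direct Laplacian approach does not close the estimate cleanly, a robust alternative is to apply the fractional Laplacian $(-\Delta)^{\gamma/2}$ with $\gamma=d-s+1\in(1,2)$ instead: this is the exponent that matches the Fourier multiplier of $U$, so that $(-\Delta)^{\gamma/2}U$ is a positive multiple of $\omega$ and vanishes identically near $x^*$, while the nonlocal maximum principle still forces $(-\Delta)^{\gamma/2}\Phi(x^*)\ge 0$, giving the contradiction in a cleaner form.
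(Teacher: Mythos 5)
Your proposal does not close, and both of the strategies you sketch have a genuine gap that cannot be patched without an additional idea. The core difficulty is that the classical Laplacian is a \emph{local} operator, while the obstruction here is nonlocal; moreover, your fallback fractional-Laplacian route stumbles on the nonlinearity of $V$.

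Concerning the primary approach (second-derivative test at $x^*$): you correctly identify that $\operatorname{tr}\bigl(D^2V(F)\,A^2\bigr)\geq 0$ by convexity, but this term can vanish (indeed $v''(t)=0$ for $t\geq 2$), and more importantly the terms $\nabla V(F)\cdot\Delta F$ and $\Delta R^*(g\omega)(x^*)$ have no sign whatsoever --- you acknowledge this yourself. To see that the obstacle is fundamental and not a matter of cleverer estimates, take $V\equiv 0$: then $\Phi=R^*(g\omega)$ and $\Delta\Phi=\Delta R^*(g\omega)$ is a harmless smooth function off $\operatorname{supp}(\omega)$ with no reason to have a favourable sign at an interior maximum. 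Yet the proposition is still true in that case. So no argument that only uses $\nabla\Phi(x^*)=0$ and $\Delta\Phi(x^*)\leq 0$ can possibly succeed; a nonlocal operator is unavoidable.

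Concerning the alternative: your choice of exponent $\gamma=d-s+1$ is indeed the right one to invert the Riesz potential, and the nonlocal second-derivative test $(-\Delta)^{\gamma/2}\Phi(x^*)\geq 0$ is a sound observation. But you apply it to $\Phi=V(R(\omega))+R^*(g\omega)$, and $(-\Delta)^{\gamma/2}$ does not commute with the nonlinear composition $V(\cdot)$, so $(-\Delta)^{\gamma/2}\bigl[V(R(\omega))\bigr]$ is not ``a positive multiple of $\omega$'' nor anything computable; only the piece $(-\Delta)^{\gamma/2}R^*(g\omega)$ simplifies. The missing ingredient is the paper's linearization: write $V$ as a supremum of affine functions via its Legendre transform, $V(x)=\max_{t\geq 0,\,|e|=1}\bigl[t\,(e,x)-v^*(t)\bigr]$, so that at a point where $\Phi>0$ one has $\Phi(x)=R^*\bigl([g-te]\,\omega\bigr)(x)-v^*(t)$ for some optimal $(t,e)$, and $R^*\bigl([g-te]\,\omega\bigr)(x)>0$. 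Now everything hitting $(-\Delta)^{\gamma/2}$ is \emph{linear} in the underlying measure, so the inversion formula applies and forces the maximizer into $\operatorname{supp}(\omega)$; undoing the affine envelope then yields the proposition. This linearization step is what your sketch leaves out, and without it neither of your two routes produces a proof.
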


\begin{proof} We shall give a moderately detailed proof.  For a more careful exposition of this argument see Section 17 of \cite{ENV11}.  The key observation is that if $\nu$ is a vector valued measure with $C^{\infty}_0(\mathbf{R}^d)$ density with respect to $m_d$, then
\begin{equation}\label{adjoinmax}
\max_{\mathbf{R}^d} R^{*}(\nu) = \max_{\text{supp}(\nu)}R^{*}(\nu),
\end{equation}
provided the left hand side is positive.

To see this, we set $u=R^{*}(\nu)$. Then we can write $u$ as the $(s-1)$-dimensional Riesz potential $u(x) = \frac{-1}{s-1}\int_{\mathbf{R}^d}\frac{1}{|x-y|^{s-1}}p(y)dm_d(y),$ where $p$ is the divergence of the density of $\nu$. It is immediate that $\text{supp}(p)\subset \text{supp}(\nu)$.  Since $u$ decays suitably at infinity, the density $p$ can be recovered from $u$ by the integral operator
\begin{equation}\label{fraclap}p(x) = \varkappa \,P.V.\int_{\mathbf{R}^d} \frac{u(y)-u(x)}{|y-x|^{2d+1-s}} d m_d(y),
\end{equation}
where $\varkappa$ is a non-zero constant depending on $s$ and $d$, see for example \cite{Lan70, ENV11}.  For $s<d-1$, the analogue of this inversion formula involves the Laplacian of $u$, and appears difficult to work with.  This is the main reason for our restriction to $s\in (d-1,d)$.

The decay of $u$ at infinity ensures that should $u$ have a positive maximum,  the maximum is attained and $u$ is not constant.  Now suppose that $u$ attains a positive maximum at $x$. Then we observe that the integral appearing in (\ref{fraclap}) is non-zero.  Hence $x\in \text{supp}(p)\subset \text{supp}(\nu)$, and (\ref{adjoinmax}) is proved.

To prove the proposition, write $V(x) = \max_{t \geq 0, |e|=1}[t(e,x) - v^{*}(t)]$, where $v^*(t)$ is the Legendre transform of $v(t)$.  Fix $x\in \mathbf{R}^d$ with $V(R(\omega))(x) +R^{*}(g\omega)(x)>0$.  For some $t\geq 0$ and unit vector $e$, we have $$V(R(\omega))(x)+R^{*}(g\omega)(x) = R^*(g\omega - te \omega)(x)-v^{*}(t).$$
Since $v^*(t)\geq 0$, we see that $R^*([g - te] \omega)(x)>0$.  Hence (\ref{adjoinmax}) guarantees that $R^{*}([g - te] \omega)$ attains its maximum on the support of $\omega$.  We conclude that
\begin{equation}\begin{split}\nonumber V(R(\omega))(x) +R^{*}(g\omega)(x)&\leq \max_{\text{supp}(\omega)} R^{*}([g - te] \omega) -v^*(t) \\
&\leq \max_{\text{supp}(\omega)}V(R(\omega)) +R^{*}(g\omega),
\end{split}\end{equation}
as required.
\end{proof}

 Letting $\omega = \mu^{\star}$ and $g=\nabla V(R(\mu^{\star}))$ in Proposition \ref{maxp}, we conclude that (\ref{smallonsupport}) holds for all $w\in \mathbf{R}^d$, provided $V(R(\mu^{\star})) + R^*[\nabla V(R(\mu^{\star}))\mu^{\star}]$ has a positive maximum.  However, if this is not the case then (\ref{smallonsupport}) holds trivially for all $w\in \mathbf{R}^d$.

\subsection{The conclusion of the proof of Proposition \ref{prop3}}We are now in a position to bring our estimates together.

\begin{proof}[Proof of Proposition \ref{prop3}] We begin by integrating the bound (\ref{smallonsupport}), valid for all $w\in \mathbf{R}^d$, against the function $\Psi$ defined in (\ref{Psidef}).  The result is the estimate
\begin{equation}\begin{split}\label{intvspsi}\int_{\mathbf{R}^d}V(R(\mu^{\star}))\Psi dm_d & + \int_{\mathbf{R}^d}R^*[\nabla V(R(\mu^{\star}))\mu^{\star}]\Psi dm_d \\
&\leq C_{30}\mu'(\mathbf{R}^d)\Bigl[ 6\lambda  + C_{35}\Bigl(\frac{M^{2s}\delta}{\varepsilon^{d+s}}+\frac{1}{M}\Bigl)\Bigl].
\end{split}\end{equation}
The first integral on the left hand side of (\ref{intvspsi}) is estimated from below using Lemma \ref{Rieszlowbdlem}, since $\mu^{\star}$ satisfies the assumptions on the measure $\nu$.  To estimate the second integral on the left hand side of (\ref{intvspsi}), we write
\begin{equation}\label{beforecs}\int_{\mathbf{R}^d}\!R^*[\nabla V(R(\mu^{\star}))\mu^{\star}] \Psi dm_d = \int_{\mathbf{R}^d} \!\bigl(R(\Psi m_d), \nabla V(R(\mu^{\star}))\bigl)d\mu^{\star}.
\end{equation}
Applying the Cauchy-Schwarz inequality, we bound this expression in absolute value by
\begin{equation}\label{aftercs}
\Bigl[\int_{\mathbf{R}^d} \bigl|R(\Psi m_d)\bigl|^2 d\mu^{\star}\Bigl]^{1/2}\cdot\Bigl[\int_{\mathbf{R}^d}|\nabla V(R(\mu^{\star}))|^2d\mu^{\star}\Bigl]^{1/2},
\end{equation}
which we claim is no greater than $4\sqrt{\lambda C_{34}}m'.$  To see this, note that by Proposition \ref{rphil2lem},
$$\int_{\mathbf{R}^d} \bigl|R(\Psi m_d)\bigl|^2 d\mu^{\star}\leq 2\int_{\mathbf{R}^d} \bigl|R(\Psi m_d)\bigl|^2 d\tilde\mu\leq 2C_{34}m'.$$
On the other hand, since $|\nabla V|^2\leq 4V$, it follows that
$$\int_{\mathbf{R}^d}|\nabla V(R(\mu^{\star}))|^2d\mu^{\star}\leq 4\int_{\mathbf{R}^d}V(R(\mu^{\star}))d\mu^{\star}\leq 4F(\mathbf{a}^{\star})\leq 8\lambda m',
$$
and the claimed estimate follows.

Bringing everything together, we get the following inequality:
\begin{equation}\label{lamrel} c_{31}\frac{\Delta^2(m')^2}{m^2}-4\sqrt{C_{34}\lambda}\leq C_{30}\Bigl(6\lambda+ C_{35}\Bigl[\frac{M^{2s}\delta}{\varepsilon^{d+s}}+\frac{1}{M}\Bigl]\Bigl).
\end{equation}
Let $\lambda = c_{36}\Delta^4 (m'/m)^4$, for a suitable small constant $c_{36}$.  Then if $\varepsilon$, $M$, and $\delta$ satisfy
\begin{equation}\label{anothercond}\Bigl[\frac{M^{2s}\delta}{\varepsilon^{d+s}}+\frac{1}{M}\Bigl]\leq \lambda,\end{equation}
we arrive at a contradiction with (\ref{lamrel}) provided $c_{36}$ was chosen small enough (recall here that $\Delta\leq C_1$).  As a result, either  (\ref{supposesmall}) or (\ref{anothercond}) is false.  Either way, we obtain
$$\int_{\Omega}V(R(\tilde{\mu}))d\tilde\mu \geq c_{36}\Bigl(\frac{\Delta m'}{m}\Bigl)^4m'- \Bigl[\frac{M^{2s}\delta}{\varepsilon^{d+s}}+\frac{1}{M}\Bigl]m'.
$$
Appealing to the comparison estimate (\ref{lowtransfer}), we conclude that
\begin{equation}\label{zeroorderlowbd}\int_{\Omega}V(R^{(n)}(\mu'))d\mu' \geq c_{36}\Bigl(\frac{\Delta m'}{m}\Bigl)^4m'- (C_{29}+1)\Bigl[\frac{M^{2s}\delta}{\varepsilon^{d+s}}+\frac{1}{M}\Bigl]m'.
\end{equation}
Now note that an application of H\"{o}lder's inequality yields
$$\sum_j \frac{\mu'(\Omega^{(n)}_j)^4}{\mu(\Omega^{(n)}_j)^4} \mu'(\Omega^{(n)}_j) \geq \frac{\mu'(\mathbf{R}^d)^5}{\mu(\mathbf{R}^d)^4} \geq \Bigl(\frac{\gamma}{2}\Bigl)^4\mu'(\mathbf{R}^d).
$$
Hence, summing (\ref{zeroorderlowbd}) over the level $n$ Cantor cells, and recalling that $V(x)\leq |x|^2$, we deduce that
$$\int_{\mathbf{R}^d}|R^{(n)}(\mu')|^2d\mu'\geq  \frac{c_{36}\Delta^4\gamma^4}{2^4} \mu'(\mathbf{R}^d)- (C_{29}+1)\Bigl[\frac{M^{2s}\delta}{\varepsilon^{d+s}}+\frac{1}{M}\Bigl]\mu'(\mathbf{R}^d).
$$
It remains to choose $K_2 =  \frac{2^5(C_{29}+1)}{c_{36}}$.  This completes the proof.
\end{proof}

\section{The exponential potential and capacity}\label{exppotsec}

To conclude the paper, we make a brief digression into capacity.  We shall set up a general form of non-linear capacity using Wolff's potentials.  Suppose that $\Phi:[0,\infty)\rightarrow [0,\infty)$ satisfies the following conditions:
\begin{enumerate}
\item $\Phi(0)=0$,
\item $\Phi$ is continuous, and strictly increasing,
\item there exist positive constants $\sigma$ and $\varkappa$, such that $\Phi(t)/t^{\sigma}$ is nondecreasing on $(0,\varkappa]$.
\end{enumerate}
We define the $s$-dimensional Wolff potential associated to the gauge $\Phi$ by
\begin{equation}\label{wolffphi}\mathcal{W}_{\Phi,s}(\mu)(x) = \int_0^{\infty}\Phi\Bigl(\frac{\mu(B(x,r))}{r^s}\Bigl) \frac{dr}{r}.
\end{equation}
The $s$-dimensional nonlinear capacity associated to $\Phi$ is defined for a compact set $E\subset \mathbf{R}^d$ by
\begin{equation}\begin{split}\label{capphi}
\text{cap}_{\Phi,s}(E) = & \sup \bigl\{\mu(E): \operatorname{supp}(\mu)\subset E, \text{ and }\\
& \mathcal{W}_{\Phi,s}(\mu)(x)\leq 1 \text{ for all }x\in \mathbf{R}^d\bigl\}.
\end{split}\end{equation}

First note that the capacity is indeed $s$-dimensional: for $\lambda>0$ and a compact set $E\subset \mathbf{R}^d$, define $\lambda E+z = \{\lambda e +z: e\in E\}$.  Then we have $\text{cap}_{\Phi,s}(\lambda E+z) = \lambda^s\text{cap}_{\Phi,s}(E)$, for any $z\in \mathbf{R}^d$.

To see this, observe that if $\mu$ is an admissible measure for the capacity of $\lambda E + z$, then the measure $\nu(A) = \lambda^{-s}\mu(\lambda A+z)$ is admissible for the capacity of $E$, and vice versa.

We now examine how the capacity changes with the size condition on the Wolff potential in (\ref{capphi}).  For $A>0$, and a compact set $E\subset \mathbf{R}^d$, we define
\begin{equation}\begin{split}\nonumber \operatorname{cap}_{\Phi, s}^{(A)}(E) =  \sup \bigl\{\mu(E): &\,\operatorname{ supp}(\mu)\subset E, \text{ and } \\
&\mathcal{W}_{\Phi,s}(\mu)(x)\leq A \text{ for all }x\in\mathbf{R}^d\bigl\}.
\end{split}\end{equation}
\begin{lem}\label{cutoffsame} Suppose $0<A'<A$. There exists a constant $C=C(A',A, \sigma, \varkappa,s)>0$, such that for all compact sets $E\subset \mathbf{R}^d$,
$$\operatorname{cap}^{(A')}_{\Phi, s}(E) \leq \operatorname{cap}_{\Phi, s}^{(A)}(E) \leq C \operatorname{cap}^{(A')}_{\Phi, s}(E).
$$ \end{lem}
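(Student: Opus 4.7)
The first inequality is immediate from the definitions: any measure admissible for $\operatorname{cap}^{(A')}_{\Phi,s}(E)$ automatically satisfies $\mathcal{W}_{\Phi,s}(\mu) \leq A' < A$, so it is admissible for $\operatorname{cap}^{(A)}_{\Phi,s}(E)$ as well.

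For the reverse inequality my plan is to take a measure $\mu$ almost extremal for the $A$-capacity and produce an admissible measure for the $A'$-capacity with mass bounded below by $\mu(E)/C$, via the rescaling $\nu = \lambda\mu$, where $\lambda = \lambda(A, A', \sigma, \varkappa) \in (0,1)$ is to be chosen. Since $\operatorname{supp}(\nu) \subset E$ and $\nu(E) = \lambda\mu(E)$, it is enough to verify that $\mathcal{W}_{\Phi,s}(\lambda\mu)(x) \leq A'$ for every $x\in\mathbf{R}^d$, which will give $C = 1/\lambda$.

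For the pointwise bound I would split the integral $\int_0^\infty \Phi(\lambda\mu(B(x,r))/r^s)\,dr/r$ according to the density $t := \mu(B(x,r))/r^s$. On $\{r : t \leq \varkappa\}$, both $t$ and $\lambda t$ lie in $(0,\varkappa]$, so Property (3) gives $\Phi(\lambda t) \leq \lambda^\sigma \Phi(t)$, contributing at most $\lambda^\sigma A$. On the complement $\{r : t > \varkappa\}$, Chebyshev's inequality applied to the hypothesis yields $\mathcal{L}(\{r : t > \Delta\}) \leq A/\Phi(\Delta)$ for every $\Delta \geq \varkappa$. I would further decompose this ``bad'' set into the subregion $\{r : \lambda t \leq \varkappa\}$, on which Property (3) applies in the form $\Phi(\lambda t) \leq (\lambda t/\varkappa)^\sigma \Phi(\varkappa)$, and the super-bad region $\{r : \lambda t > \varkappa\}$, whose $\mathcal{L}$-length is at most $A/\Phi(\varkappa/\lambda)$ and on which I retain only the monotonicity bound $\Phi(\lambda t) \leq \Phi(t)$.

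Summing the three contributions via an integration by parts / layer-cake argument against $d\Phi$ gives an estimate of the form $\mathcal{W}_{\Phi,s}(\lambda\mu)(x) \leq F(\lambda; A, \sigma, \varkappa, \Phi)$ in which the dependence on $\lambda$ decays as $\lambda \to 0$. Choosing $\lambda$ so that $F(\lambda;\dots) \leq A'$ concludes the argument with $C = 1/\lambda$, a quantity that in the model case $\Phi(t) \asymp t^\sigma$ is a power of $A/A'$. The hard part will be the middle-density regime $\varkappa < t \leq \varkappa/\lambda$: the naive estimate there returns only an $O(A)$ contribution and does not shrink with $\lambda$, and obtaining the needed decay requires carefully marrying the Chebyshev smallness of the level sets $\rho_\mu(\Delta) \leq A/\Phi(\Delta)$ with the sharp form $\Phi(\lambda t) \leq (\lambda t/\varkappa)^\sigma \Phi(\varkappa)$ of Property~(3), so that the two cancel against the Stieltjes weight $d\Phi$ in the resulting integral.
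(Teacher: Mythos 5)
Your first inequality and the overall strategy (rescale $\mu$ by a small $\lambda$ and verify admissibility pointwise) match the paper's. But you correctly sense that your decomposition does not close, and the gap is real: under the stated hypotheses, condition (3) says nothing about $\Phi$ above $\varkappa$, so your middle regime $\varkappa < t \leq \varkappa/\lambda$ and super-bad regime $t>\varkappa/\lambda$ genuinely cannot be controlled by Chebyshev alone. For instance, $\Phi$ could be essentially constant on $[\varkappa,\infty)$; then $\Phi(\varkappa/\lambda)\asymp\Phi(\varkappa)$, and both of those regions contribute $O(A)$ no matter how small $\lambda$ is, so the argument stalls exactly where you flagged it.

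The missing idea is an averaging trick that upgrades the hypothesis $\mathcal{W}_{\Phi,s}(\mu)\leq A$ to a \emph{pointwise} density bound, rather than a mere level-set bound. Since $r\mapsto\mu(B(x,r))$ is nondecreasing, integrating the potential bound over a logarithmic interval $[r,Mr]$ gives
$$\log M\cdot\Phi\Bigl(\frac{\mu(B(x,r))}{M^sr^s}\Bigl)\leq\int_r^{Mr}\Phi\Bigl(\frac{\mu(B(x,t))}{t^s}\Bigl)\frac{dt}{t}\leq A,$$
for every $r>0$. Choosing $M$ depending only on $A$ and $\varkappa$ (e.g.\ so that $A/\log M\leq\Phi(\varkappa)$) forces $\mu(B(x,r))/(M^sr^s)\leq\varkappa$ for all $r$ and $x$. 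After dividing $\mu$ by $M^s$, \emph{every} scale is in the low-density regime, your middle and high regions are empty, and property (3) applied to the whole integral with the extra factor $(A'/A)^{1/\sigma}$ yields $\mathcal{W}_{\Phi,s}\bigl((A'/A)^{1/\sigma}M^{-s}\mu\bigl)\leq A'$ directly, giving $C=(A/A')^{1/\sigma}M^s$. Chebyshev only uses the integral bound on the potential; the monotonicity of $\mu(B(x,\cdot))$ is the extra structural input that makes the decomposition collapse, and it is this step your proof is missing.
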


\begin{proof}  The first inequality is trivial.  To see the second inequality, suppose that $\text{cap}_{\Phi,s}^{(A)}(E)>0$.  Let $\varepsilon>0$, and choose $\mu$ to be an admissible measure for $\text{cap}_{\Phi, s}^{(A)}(E)$, with $\mu(E)\geq (1-\varepsilon)\text{cap}_{\Phi, s}^{(A)}(E)$. Fix $x\in \mathbf{R}^d$, and note that
$$\log M  \cdot \Phi\Bigl(\frac{\mu(B(x,r))}{M^sr^s}\Bigl) \leq \int_r^{Mr}\Phi\Bigl(\frac{\mu(B(x,t))}{t^s}\Bigl) \frac{dt}{t}\leq \mathcal{W}_{\Phi,s}(\mu)(x)\leq A.
$$
Setting $M=e^{A/\varkappa}$, we conclude that $\Phi\bigl(\frac{\mu(B(x,r))}{M^sr^s}\bigl)\leq \varkappa$ for all $r>0$. 
Using conditions (2) and (3) in the definition of $\Phi$, we see that
\begin{equation}\begin{split}\nonumber A'\geq \frac{A'}{A} \int_{0}^{\infty}\Phi\Bigl(\frac{\mu(B(x,r))}{M^s r^s}\Bigl)\frac{dr}{r}\geq \int_0^{\infty}\Phi\Bigl(\Bigl(\frac{A'}{A}\Bigl)^{\frac{1}{\sigma}}\frac{\mu(B(x,r))}{M^s r^s}\Bigl)\frac{dr}{r}.
\end{split}\end{equation}


Hence, $\bigl(\tfrac{A'}{A}\bigl)^{\frac{1}{\sigma}}M^{-s}\mu$ is an admissible measure for $\text{cap}^{(A')}_{\Phi,s}(E)$, and therefore $(1-\varepsilon)\text{cap}^{(A)}_{\Phi,s}(E) \leq \bigl(\tfrac{A}{A'}\bigl)^{\frac{1}{\sigma}} e^{sA/\varkappa} \text{cap}^{(A')}(E)$.
\end{proof}

 Let $\mathcal{H}^s(E)$ be the $s$-dimensional Hausdorff measure of a set $E$.  The next result states that the capacity is a finer set function than the Hausdorff measure, regardless of $\Phi$.


\begin{lem}\label{Hauszero} Suppose that $E\subset \mathbf{R}^d$ is a compact set with $\mathcal{H}^s(E)<\infty$.  Then $\text{cap}_{\Phi,s}(E)=0$.
\end{lem}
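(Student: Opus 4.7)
The plan is to argue by contradiction. Suppose $\operatorname{cap}_{\Phi,s}(E) > 0$; then there exists an admissible measure $\nu$ with $\operatorname{supp}(\nu) \subset E$, $\nu(E) > 0$, and $\mathcal{W}_{\Phi,s}(\nu)(x) \leq 1$ for every $x \in \mathbf{R}^d$. I will derive a contradiction by exhibiting a subset of $E$ of positive $\nu$-measure on which $\mathcal{W}_{\Phi,s}(\nu)$ is in fact infinite.

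The first step is to lean on a classical density comparison lemma from geometric measure theory: if $\nu$ is any Radon measure and $A$ is a Borel set with $\Theta^{*s}(\nu,x) := \limsup_{r \to 0^+} \nu(B(x,r))/r^s \leq t$ for every $x \in A$, then $\nu(A) \leq C_d\, t\, \mathcal{H}^s(A)$ (see, e.g., Mattila, \emph{Geometry of Sets and Measures in Euclidean Spaces}, Theorem 6.9). Applying this to $A_n := \{x \in E : \Theta^{*s}(\nu,x) < 1/n\}$ and invoking $\mathcal{H}^s(E) < \infty$ gives $\nu(A_n) \leq C_d\,\mathcal{H}^s(E)/n \to 0$. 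Since $\{x \in E : \Theta^{*s}(\nu,x) = 0\} = \bigcap_n A_n$, it follows that $\Theta^{*s}(\nu, x) > 0$ for $\nu$-almost every $x \in E$; in particular, such a point exists.

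Next, fix any such $x$ with $\theta := \Theta^{*s}(\nu, x) > 0$ and pick $\eta \in (0, \theta)$. Choose a sequence $r_k \downarrow 0$ with $\nu(B(x, r_k)) \geq \eta\, r_k^s$, and, by thinning, arrange $r_{k+1} < r_k/2$, so that the intervals $[r_k, 2r_k]$ are pairwise disjoint. For each $\rho \in [r_k, 2r_k]$, monotonicity of $\rho \mapsto \nu(B(x,\rho))$ yields
\[
\frac{\nu(B(x,\rho))}{\rho^s} \;\geq\; \frac{\nu(B(x,r_k))}{(2r_k)^s} \;\geq\; \frac{\eta}{2^s}.
\]
Since $\Phi$ is strictly increasing with $\Phi(\eta/2^s) > 0$, each such interval contributes at least $\Phi(\eta/2^s)\log 2$ to the Wolff potential, and summing over $k$ gives
\[
\mathcal{W}_{\Phi,s}(\nu)(x) \;\geq\; \sum_{k}\Phi(\eta/2^s)\log 2 \;=\; +\infty,
\]
in direct contradiction with the admissibility bound $\mathcal{W}_{\Phi,s}(\nu)(x) \leq 1$.

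The only non-elementary ingredient is the density comparison lemma, which I would simply cite; once it is in hand, the rest is just the observation that \emph{any} positive lower bound on the upper density, attained along arbitrarily small radii, inflates the Wolff potential beyond any finite value. Notably, condition (3) on $\Phi$ (and indeed any specific decay rate at the origin) plays no role — only $\Phi(t) > 0$ for $t > 0$ is used — so the same argument gives $\operatorname{cap}_{\Phi,s}(E) = 0$ for an essentially unrestricted family of gauges.
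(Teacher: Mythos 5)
Your proof is correct, but it takes a genuinely different route from the paper's. The paper argues directly from the covering definition of Hausdorff measure: it defines $E_\rho = \{x \in E : \mu(B(x,r))/r^s \leq 2^s\Phi^{-1}(\varepsilon) \text{ for all } r \leq \rho\}$, covers $E$ by balls of radii $\leq \rho/2$ with $\sum_j r_j^s \leq \mathcal{H}^s(E)+1$, and obtains $\mu(E_\rho) \leq 4^s\Phi^{-1}(\varepsilon)(\mathcal{H}^s(E)+1)$; the finiteness of the Wolff potential then shows that the $E_\rho$ exhaust $E$ as $\rho \to 0^+$, so $\mu(E)$ itself is bounded by the right-hand side, which is a contradiction once $\varepsilon$ is small. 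You instead cite Mattila's density comparison theorem to locate a point of positive upper $s$-density, and then show that any such point has infinite Wolff potential by summing contributions from disjoint dyadic intervals $[r_k,2r_k]$. Both arguments are correct and, as you note, both really only use that $\Phi$ is increasing, continuous, and vanishes at the origin; neither needs condition (3). The trade-off is that your argument is shorter but imports a nontrivial external theorem, whereas the paper's is self-contained (it essentially reproves the relevant special case of that comparison lemma inline via the Hausdorff cover). Your localization to a single point of positive density is a slightly cleaner way to see why finiteness of the potential is incompatible with $\mathcal{H}^s(E)<\infty$.
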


\begin{proof} Suppose that $\mathcal{H}^s(E)<\infty$, but $\text{cap}_{\Phi,s}(E)>0$.  Then there exists a measure $\mu$ with $\mu(E)>0$ and $\mathcal{W}_{\Phi,s}(\mu)(x)\leq 1$ for all $x\in \mathbf{R}^d$.

 Let $\varepsilon>0$ be small enough so that $\Phi^{-1}(\varepsilon)$ exists ($\Phi^{-1}$ here denotes the inverse function to $\Phi$).  Note that $\Phi^{-1}(t)\rightarrow 0$ as $t\rightarrow 0$.   Let $\rho>0$, and consider the set $E_{\rho}=\{x\in E: \tfrac{\mu(B(x,r))}{r^s} \leq 2^s \Phi^{-1}(\varepsilon), \text{ for all }r\leq\rho\}$.

 Consider a cover of $E$ by balls $B_j$ with radii $r_j\leq  \rho/2$, satisfying $\sum_j r_j^s \leq \mathcal{H}^s(E)+1$.  For each ball $B_j$ intersecting $E_{\rho}$, let $x_j\in B_j \cap E_{\rho}$.  Then $E_{\rho}\subset \cup_j B(x_j, 2r_j)$, and we have $$\mu(E_{\rho}) \leq \sum_j\mu(B(x_j, 2 r_j)) \leq 4^s \Phi^{-1}(\varepsilon)\sum_j r_j^s \leq 4^s \Phi^{-1}(\varepsilon)(\mathcal{H}^s(E)+1).$$

 To obtain a contradiction, we claim that the sets $E_{\rho}$ increase to exhaust $E$ as $\rho \rightarrow 0^+$.  Assuming this, we have $\mu(E) \leq 4^{s} \Phi^{-1}(\varepsilon)(\mathcal{H}^s(E)+1)$, and the right hand side of this inequality can be chosen to be less than $\mu(E)$ for small enough $\varepsilon$, which is absurd.

To prove the claim, let $x\in E$.  Since $\mathcal{W}_{\Phi,s}(\mu)(x)\leq 1$, there exists $\rho>0$ small enough so that $\int_0^{2\rho}\Phi\bigl(\frac{\mu(B(x,r))}{r^s}\bigl)\frac{dr}{r}\leq \varepsilon \log 2$.  Then we have $\log2 \cdot\Phi(\tfrac{\mu(B(x,r))}{2^sr^s})\leq \varepsilon\log 2 $ for any $r<\rho$.  Hence $\tfrac{\mu(B(x,r))}{r^s}\leq 2^s\Phi^{-1}(\varepsilon)$ for any $r<\rho$, and therefore $x\in E_{\rho}$.
\end{proof}

The next result we shall require is an elementary maximum principle for general potentials.

\begin{lem}[Maximum Principle]  For a nonnegative measure $\mu$, denote $\tilde{\mu} =2^{-s}\mu$.  For $A>0$, suppose that $\mathcal{W}_{\Phi,s}(\mu)(x)\leq A$ for all $x\in\operatorname{supp}(\mu)$.  Then
$\mathcal{W}_{\Phi,s}(\tilde{\mu})(x)\leq A$ for all $x\in \mathbf{R}^d$.
\end{lem}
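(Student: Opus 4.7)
The strategy is a direct comparison: given an arbitrary $x\in\mathbf{R}^d$, we dominate $\mathcal{W}_{\Phi,s}(\tilde{\mu})(x)$ by $\mathcal{W}_{\Phi,s}(\mu)(x_0)$ where $x_0$ is a closest point of $\operatorname{supp}(\mu)$ to $x$. The factor $2^{-s}$ in the definition of $\tilde\mu$ is precisely what allows us to absorb the doubling of radii that arises from this comparison.

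The key observation is that for any $r>0$, the inclusion $B(x,r)\subset B(x_0,2r)$ holds: if $y\in B(x,r)$ then $|y-x_0|\leq |y-x|+|x-x_0|<r+d\leq 2r$, using $d:=|x-x_0|\leq r$; while if $r<d$ the ball $B(x,r)$ misses $\operatorname{supp}(\mu)$ entirely, so $\mu(B(x,r))=0$. In either case $\mu(B(x,r))\leq\mu(B(x_0,2r))$. Dividing by $r^s$ and inserting the $2^{-s}$, we obtain
\begin{equation*}
\frac{\tilde{\mu}(B(x,r))}{r^s}=\frac{\mu(B(x,r))}{2^s r^s}\leq\frac{\mu(B(x_0,2r))}{(2r)^s}.
\end{equation*}

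The rest is a change of variable. Since $\Phi$ is nondecreasing and the logarithmic measure $dr/r$ is invariant under the scaling $r\mapsto 2r$, we have
\begin{equation*}
\mathcal{W}_{\Phi,s}(\tilde{\mu})(x)=\int_0^{\infty}\Phi\Bigl(\frac{\tilde{\mu}(B(x,r))}{r^s}\Bigl)\frac{dr}{r}\leq\int_0^{\infty}\Phi\Bigl(\frac{\mu(B(x_0,2r))}{(2r)^s}\Bigl)\frac{dr}{r}=\mathcal{W}_{\Phi,s}(\mu)(x_0)\leq A.
\end{equation*}

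There is no real obstacle here; the only minor technicality is guaranteeing the existence of $x_0$. This is immediate because $\operatorname{supp}(\mu)$ is closed in $\mathbf{R}^d$ and hence proximinal (the case $\mu\equiv 0$ being trivial). One could equally phrase the argument by taking an infimizing sequence and passing to a limit, but the closedness of the support makes the choice of $x_0$ genuine. After handling this, the two displays above constitute the entire proof.
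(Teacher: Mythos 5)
Your proof is correct and takes essentially the same approach as the paper: compare $\tilde{\mu}$-balls around $x$ to $\mu$-balls of doubled radius around a nearby point $x_0\in\operatorname{supp}(\mu)$, with the $2^{-s}$ factor absorbing the change $r\mapsto 2r$ under the scale-invariant measure $dr/r$. The only difference is cosmetic---the paper chooses an approximate nearest point $z$ with $|x-z|<d+\delta$ and lets $\delta\to 0^{+}$ via monotone convergence, whereas you use the closedness of $\operatorname{supp}(\mu)$ to select a genuine minimizer $x_0$, which streamlines the limiting step.
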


\begin{proof}  Let $\delta>0$. Suppose $x\not\in \operatorname{supp}(\mu)$.  Put $d= \operatorname{dist}(x, \operatorname{supp}(\mu))$.  Then we have $\mathcal{W}_{\Phi,s}(\mu)(x) = \int_d^{\infty}\Phi\bigl(\frac{\mu(B(x,r))}{r^s}\bigl)\frac{dr}{r}.$  Let $z\in \text{supp}(\mu)$ be such that $|x-z|< d+\delta$. Note that $B(x,r)\subset B(z,2r)$, for any $r>d+\delta$.  Hence, we see that
$$\int_{d+\delta}^{\infty}\Phi\Big(\frac{\mu(B(x,r))}{2^sr^s}\Bigl)\frac{dr}{r} \leq \int_{0}^{\infty}\Phi\Bigl(\frac{\mu(B(z,2r))}{(2r)^s}\Bigl)\frac{dr}{r}\leq A.
$$
Since $\delta>0$ was arbitrary, it follows that $\mathcal{W}_{\Phi,s}(\tilde\mu)(x)\leq A$.
\end{proof}

We shall now work with a specific $\Phi$-capacity.  Fix $\beta = \beta(s,d)$ satisfying $\beta >1/\alpha$, with $\alpha>0$ the constant of Theorem \ref{thm1}.  Now define $\Phi(t)=e^{-1/t^{\beta}}$.  A simple consequence of Theorem \ref{thm1} is that $\mathcal{W}_{\Phi,s}(\mu)$ is finite $\mu$ almost everywhere.


\begin{prop}\label{aepot}Suppose $||R(\mu)||_{L^{\infty}}\leq 1$.  Then for each $\varepsilon>0$, there exists $A_{\varepsilon}>0$ depending on $\varepsilon$, $s$, and $d$, such that
\begin{equation}\label{aepotst}
\mu\bigl(\bigl\{x\in \mathbf{R}^d:\mathcal{W}_{\Phi,s}(\mu)(x)>A_{\varepsilon}\bigl\}\bigl)\leq \varepsilon \mu(\mathbf{R}^d).
\end{equation}
\end{prop}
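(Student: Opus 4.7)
The plan is to turn Theorem \ref{thm1} into the desired weak bound on $\mathcal{W}_{\Phi,s}(\mu)$ via a layer-cake representation of the Wolff potential, followed by a dyadic union bound at carefully chosen thresholds. First, since $\Phi$ is $C^1$, increasing, and $\Phi(0)=0$, Fubini gives
\begin{equation*}
\mathcal{W}_{\Phi,s}(\mu)(x)=\int_0^\infty\Phi'(\Delta)\,\mathcal{L}(E(x,\Delta))\,d\Delta,
\end{equation*}
and Lemma \ref{neccond} forces $E(x,\Delta)=\varnothing$ for $\Delta\geq C_1$, so the $\Delta$-integral is effectively over $(0,C_1]$.

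Since $\beta>1/\alpha$ entails $\alpha\beta>1$, I would fix once and for all some $\gamma\in(1/(\alpha\beta),1)$, and introduce two sequences $\Delta_k=C_1 2^{-k/\beta}$ and $T_k=\exp(a\cdot 2^{k\gamma})$ for $k\geq 0$, where $a>1$ will be chosen at the end in terms of $\varepsilon$. With this choice $\Phi(\Delta_k)=\exp(-2^k/C_1^\beta)$, so using the monotonicity of $\mathcal{L}(E(x,\cdot))$ on each interval $[\Delta_{k+1},\Delta_k]$ I obtain the discrete majorant
\begin{equation*}
\mathcal{W}_{\Phi,s}(\mu)(x)\leq \sum_{k=0}^\infty \mathcal{L}(E(x,\Delta_{k+1}))\,\phi_k,\qquad \phi_k\leq e^{-2^k/C_1^\beta}.
\end{equation*}
Setting $F=\bigcup_{k\geq 0}\{x:\mathcal{L}(E(x,\Delta_{k+1}))>T_k\}$ and applying Theorem \ref{thm1} term by term (valid since $T_k>e$ when $a>1$),
\begin{equation*}
\mu(F)\leq \sum_{k=0}^\infty\frac{C\mu(\mathbf{R}^d)}{\Delta_{k+1}\log^\alpha T_k}=\frac{C'\mu(\mathbf{R}^d)}{a^\alpha}\sum_{k=0}^\infty 2^{-k(\alpha\gamma-1/\beta)}=\frac{C''\mu(\mathbf{R}^d)}{a^\alpha}.
\end{equation*}
Choosing $a=(C''/\varepsilon)^{1/\alpha}$ (the case $\varepsilon\geq C''$ is trivial, so I may assume $a>1$) gives $\mu(F)\leq\varepsilon\mu(\mathbf{R}^d)$, and off $F$ the discrete majorant collapses to
\begin{equation*}
\mathcal{W}_{\Phi,s}(\mu)(x)\leq \sum_{k=0}^\infty T_k\phi_k\leq \sum_{k=0}^\infty \exp\bigl(a\cdot 2^{k\gamma}-2^k/C_1^\beta\bigr)=: A_\varepsilon<\infty,
\end{equation*}
where the series converges because $\gamma<1$ forces the exponent to tend to $-\infty$. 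Thus $\{x:\mathcal{W}_{\Phi,s}(\mu)(x)>A_\varepsilon\}\subset F$, and the proof is complete.

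The main obstacle — and the one place where the specific value of $\beta$ enters — is the simultaneous control of the two competing sums: making $\sum T_k\phi_k$ finite while keeping $\sum 1/(\Delta_{k+1}\log^\alpha T_k)$ summable and small. The former demands $\log T_k\ll 2^k$, the latter demands $\log T_k\gg 2^{k/(\alpha\beta)}$, and a power ansatz $\log T_k\asymp 2^{k\gamma}$ reconciles both precisely when the interval $(1/(\alpha\beta),1)$ is non-empty, i.e., exactly when $\alpha\beta>1$. This is the unique ingredient that fails when $\beta\leq 1/\alpha$, matching the threshold already singled out in the introduction for the $\mu$-a.e.\ finiteness statement (\ref{exppot}).
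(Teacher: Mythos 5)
Your proof is correct, and it takes essentially the same route as the paper: dyadically discretize the threshold $\Delta$, choose $T_k$ of the form $\exp(a\cdot 2^{k\gamma})$ with a free parameter $a$ tuned to make $\varepsilon$ appear, apply Theorem \ref{thm1} term by term to build the exceptional set $F$, and observe that convergence of both resulting series is governed exactly by $\alpha\beta>1$. The only cosmetic difference is that you parametrize the scales as $\Delta_k = C_1 2^{-k/\beta}$ and organize the pointwise bound through a layer-cake identity, whereas the paper uses $\Delta = 2^{-k}$ directly; under the reparametrization $\gamma = \beta'/\beta$ the two choices coincide.
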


\begin{proof}  Consider the exceptional set $F$ defined by
$$F=\!\bigcup_{k\in \mathbf{Z}_+}\bigl\{x\in \mathbf{R}^d: \,\mathcal{L}\bigl(\bigl\{r\in (0,\infty) :\frac{\mu(B(x,r))}{r^s}>2^{-k}\bigl\}\bigl)>T_k \bigl\},
$$
with $T_k>0$ to be chosen momentarily.  For each $k$, we apply Theorem \ref{thm1} with $\Delta=2^{-k}$ and $T=T_k$.  This yields
$$\mu(F)\leq C\sum_{k\in \mathbf{Z}_+}\frac{2^k}{\log^{\alpha}T_k}\mu(\mathbf{R}^d).$$
Now let $\varepsilon>0$, and suppose $\beta'=\beta'(s,d)$ satisfies $\beta>\beta'>1/\alpha$.  For a large constant $\widetilde{C}>0$, we put $T_k = \exp(\widetilde{C}\varepsilon^{-1/\alpha} 2^{k\beta'}).$

 If $\widetilde{C}>0$ is large enough (in terms of $s$, $\beta$ and $d$), we see that $\mu(F)\leq \varepsilon \mu(\mathbf{R}^d)$.

For all $x\in \mathbf{R}^d\backslash F$, we have $$\mathcal{L}\bigl(\{r\in (0,\infty) :\frac{\mu(B(x,r))}{r^s}>2^{-k}\bigl\}\bigl)\leq T_{k} \text{ for all }k\in \mathbf{Z}_+.$$

Now note that $\mathcal{W}_{\Phi,s}(\mu)(x)$ can be estimated from above by
\begin{equation}\begin{split}\nonumber\sum_{k\in \mathbf{Z}_+}\max_{2^{-(k+1)}\leq t \leq 2^{-k}}&\Phi(t) \cdot \mathcal{L}\bigl(\{r\in (0,\infty) :2^{-k}\geq \frac{\mu(B(x,r))}{r^s}>2^{-k-1}\bigl\}\bigl)\\
&+\sup_{t\geq 1}\Phi(t)\cdot \mathcal{L}\bigl(\{r\in (0,\infty) : \frac{\mu(B(x,r))}{r^s}>1\bigl\}\bigl).
\end{split}\end{equation}
Since $\Phi(t)$ is increasing, $\max_{2^{-(k+1)}\leq t \leq 2^{-k}}\Phi(t) =\exp(-2^{k\beta}),$
and hence for $x\in \mathbf{R}^d\backslash F$ we have
$$\mathcal{W}_{\Phi,s}(\mu)(x)\leq T_0 + \sum_{k\in \mathbf{Z}_+} \exp(-2^{k\beta})\cdot T_k.
$$

Since $\beta>\beta'$, this sum on the right hand side converges, and therefore
 $\mathcal{W}_{\Phi,s}(\mu)(x)\leq A_{\varepsilon}$ for all $x\in \mathbf{R}^d\backslash F$.
\end{proof}

The $s$-dimensional Cald\'{e}ron-Zygmund capacity of a compact set $E$ is defined by
\begin{equation}\label{czcapdef}\nonumber
\gamma_s(E) = \sup\{\mu(E):\mu\in \mathcal{M}^+(\mathbf{R}^d),\, \operatorname{supp}(\mu)\subset E,\, ||R(\mu)||_{L^{\infty}}\leq 1\bigl\}.\end{equation}
A well-known conjecture is the following (see \cite{ENV10,Tol11}):
\begin{thmb}Suppose that $d\geq 2$ and $0<s<d$, $s\not\in \mathbf{N}$.  There exist positive constants $A_1$ and $A_2$, depending on $s$ and $d$, such that for every compact set $E\subset\mathbf{R}^d$,
\begin{equation}\label{capconj}A_1\operatorname{cap}_{\Phi,s}(E)\leq \gamma_s(E) \leq A_2\operatorname{cap}_{\Phi,s}(E),
\end{equation}
with $\Phi(t)=t^2$.\end{thmb}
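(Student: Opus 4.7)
The plan is to split into the two inequalities. The lower bound $A_1\operatorname{cap}_{\Phi,s}(E) \leq \gamma_s(E)$ is the more accessible direction and should follow by adapting existing arguments, while the upper bound $\gamma_s(E) \leq A_2\operatorname{cap}_{\Phi,s}(E)$ is essentially equivalent to the strong-type conjecture (\ref{strongtype}) and is the main obstacle.

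For the lower bound, I would take a measure $\mu$ admissible for $\operatorname{cap}_{\Phi,s}(E)$, so that $\int_0^\infty \bigl(\tfrac{\mu(B(x,r))}{r^s}\bigr)^2\tfrac{dr}{r}\leq 1$ uniformly in $x$. Comparing against a single dyadic range $[r,2r]$ yields the uniform growth bound $\mu(B(x,r)) \leq Cr^s$. The goal is then to deduce $||R(\mu)||_{L^\infty} \leq C'$; rescaling by $C'$ produces an admissible measure for $\gamma_s(E)$. In the range $s<1$ this is contained in the Mateu--Prat--Verdera work \cite{MPV05} via curvature; for general $s\notin\mathbf{N}$ one would integrate by parts in the truncated Riesz transform to dominate it by a linear-gauge Wolff potential, and then exploit cancellation (through a $T(1)$-type argument using the uniform growth) to upgrade this to a bound in terms of the square-gauge potential.

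For the upper bound, take $\mu$ admissible for $\gamma_s(E)$. If (\ref{strongtype}) were available, then $\int \mathcal{W}_{\Phi,s}(\mu)\,d\mu \leq K\mu(\mathbf{R}^d)$, and Chebyshev's inequality applied to the set $F=\{x: \mathcal{W}_{\Phi,s}(\mu)(x) \leq 2K\}$ would yield $\mu(F) \geq \mu(\mathbf{R}^d)/2$. The restriction $\mu|_F$ then has the pointwise Wolff potential bounded by $2K$ on its support, and the maximum principle established above, together with Lemma \ref{cutoffsame}, turns this into a measure admissible for $\operatorname{cap}_{\Phi,s}(E)$ of mass at least a fixed fraction of $\mu(E)$.

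The dominant obstacle is precisely the strong-type bound (\ref{strongtype}) for $s>1$, $s\notin\mathbf{N}$, which remains widely open. The present paper provides only the weak-type substitute (\ref{weaktype}) with logarithmic rate and an exponential gauge, yielding the weaker upper bound $\gamma_s(E)\leq A_2'\operatorname{cap}_{\widetilde\Phi,s}(E)$ for $\widetilde\Phi(t)=e^{-1/t^\beta}$. Bridging the gap to $\Phi(t)=t^2$ would require first sharpening the $\log^\alpha T$ decay in (\ref{weaktype}) to a power $T^{-\alpha}$, and then upgrading the resulting weak-type estimate to the strong-type one of (\ref{strongtype}) by an integration argument. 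The first step amounts to a polynomial rather than exponential quantitative Vihtil\"a theorem (Proposition \ref{qualv}), which the authors identify as beyond the reach of current multi-scale techniques.
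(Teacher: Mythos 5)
You have correctly recognized that this statement is not a theorem the paper proves — it is stated as a \emph{conjecture}, and the paper's own discussion in Section \ref{exppotsec} confirms everything you say: the lower bound in (\ref{capconj}) is known for all $0<s<d$ from \cite{ENV10}, the full equivalence is known for $s\in(0,1)$ from \cite{MPV05} (and an $s=0$ analogue from \cite{AE12}), and the upper bound for $s>1$, $s\notin\mathbf{N}$ is open. What the paper actually proves is the much weaker Proposition \ref{nonlinczineq}, namely $\gamma_s(E)\leq C\operatorname{cap}_{\Phi,s}(E)$ with the exponential gauge $\Phi(t)=e^{-1/t^\beta}$, and only for $s\in(d-1,d)$. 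Your proposed reduction of the upper bound to the strong-type estimate (\ref{strongtype}) — Chebyshev to extract a good set $F$ of measure $\geq\mu(\mathbf{R}^d)/2$, restrict, apply the maximum principle and Lemma \ref{cutoffsame} — is exactly the paper's argument for Proposition \ref{nonlinczineq} with Proposition \ref{aepot} in place of the (unavailable) strong-type bound, so that part of your analysis is on target. You also correctly locate the bottleneck: the $\log^\alpha T$ decay in Proposition \ref{qualv} is what forces the exponential gauge, and a polynomial rate there is the explicit open problem the authors flag.

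One point where your sketch is off: the lower bound $A_1\operatorname{cap}_{\Phi,s}(E)\leq\gamma_s(E)$ does not follow by "dominating the truncated Riesz transform by a linear Wolff potential and upgrading via a $T(1)$-type argument." A pointwise bound on $\mathcal{W}_{\Phi,s}(\mu)$ controls only the size of $\mu$, whereas $R(\mu)$ involves cancellation, so no pointwise domination of $|R(\mu)|$ by any Wolff potential holds. The route in \cite{ENV10}, which the paper cites, is a genuine symmetrization of the Riesz kernel combined with Cartan-type estimates — a qualitatively different mechanism. Since the lower bound is already established in the literature this does not affect the status of the conjecture, but your proposed proof of that direction would not close as written. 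The upper bound remains the genuine open obstacle, exactly as you describe.
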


In the literature, the capacity $\text{cap}_{\Phi,s}(E)$, with $\Phi(t)=t^2$ is often denoted by $\text{cap}_{\frac{2}{3}(d-s), \frac{3}{2}}(E)$, see for example \cite{AH96}.  

The conjecture above has been proven for $s\in (0,1)$ by Mateu, Prat and Verdera \cite{MPV05}.  Recently, an analogue has been proven for $s=0$ by Adams and Eiderman \cite{AE12}.  Both of these papers use curvature methods in order to prove their results, a technique which appears absent when $s>1$, see \cite{Far99}.  Any such estimate is false for integer $s$, which can be seen by considering a smooth $s$-dimensional submanifold $E\subset \mathbf{R}^d$, with $\mathcal{H}^s(E)<\infty$. (Here $\gamma_s(E)>0$, but $\text{cap}_{\Phi,s}(E)=0$.)

In \cite{ENV10},  a symmetrization of the kernel in the Riesz transform is used to obtain the lower bound in (\ref{capconj}) for all $0<s<d$.  It is therefore the upper bound which remains open.

Now suppose $s\in (d-1,d)$.  Using Proposition \ref{aepot}, we will see that the upper bound in (\ref{capconj}) holds if one replaces $\Phi(t)=t^2$ with the potential function $\Phi(t) = e^{-1/t^{\beta}}$.  Although a long way from the optimal result, it appears to be the first such bound outside the curvature range.

\begin{prop}\label{nonlinczineq}  Suppose $s\in (d-1,d)$, and $\Phi(t) = e^{-1/t^{\beta}}$.  There is a constant $C>0$ such that
\begin{equation}
\gamma_s(E)\leq C\operatorname{cap}_{\Phi, s}(E) \text{ for all compact sets }E\subset \mathbf{R}^d.
\end{equation}
\end{prop}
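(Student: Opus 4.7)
The plan is to take a measure $\mu$ nearly realizing $\gamma_s(E)$, restrict it to the large set where its Wolff potential is a priori bounded, and convert this restriction into an admissible competitor for $\operatorname{cap}_{\Phi,s}(E)$.

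To begin, fix a nonnegative measure $\mu$ with $\operatorname{supp}(\mu) \subset E$, $||R(\mu)||_{L^{\infty}} \leq 1$, and $\mu(E) \geq \tfrac{1}{2}\gamma_s(E)$. Applying Proposition \ref{aepot} with $\varepsilon = 1/2$ produces a constant $A$, depending only on $s$ and $d$, such that the set $G = \{x \in \mathbf{R}^d : \mathcal{W}_{\Phi,s}(\mu)(x) \leq A\}$ satisfies $\mu(G) \geq \tfrac{1}{2}\mu(\mathbf{R}^d) \geq \tfrac{1}{4}\gamma_s(E)$.

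The crucial observation is that $G$ is closed. For each fixed $r>0$, the function $x \mapsto \mu(B(x,r))$ is lower semicontinuous: if $x_n \to x$ then $\chi_{B(x,r)}(y) \leq \liminf_n \chi_{B(x_n,r)}(y)$ for every $y$, so Fatou's lemma applies. Composing with the continuous increasing $\Phi$ and integrating against $dr/r$ (again by Fatou) shows $\mathcal{W}_{\Phi,s}(\mu)$ is lsc, so $G$ is closed. Setting $\nu = \mu|_G$, we have $\operatorname{supp}(\nu) \subset G$, and since $\nu \leq \mu$ one has $\mathcal{W}_{\Phi,s}(\nu)(x) \leq \mathcal{W}_{\Phi,s}(\mu)(x) \leq A$ for every $x \in \operatorname{supp}(\nu)$.

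The Maximum Principle now gives $\mathcal{W}_{\Phi,s}(2^{-s}\nu)(x) \leq A$ for all $x \in \mathbf{R}^d$, so $2^{-s}\nu$ is admissible for $\operatorname{cap}_{\Phi,s}^{(A)}(E)$, yielding $\operatorname{cap}_{\Phi,s}^{(A)}(E) \geq 2^{-s}\nu(E) = 2^{-s}\mu(G) \geq 2^{-s-2}\gamma_s(E)$. Lemma \ref{cutoffsame} (with $A' = 1$) furnishes a constant $C = C(s,d)$ with $\operatorname{cap}_{\Phi,s}^{(A)}(E) \leq C\,\operatorname{cap}_{\Phi,s}(E)$, and combining the two estimates gives $\gamma_s(E) \leq C'\operatorname{cap}_{\Phi,s}(E)$ with $C' = 2^{s+2}C$. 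The deep content is entirely packaged inside Proposition \ref{aepot} (and hence inside Theorem \ref{thm1}); the only mild technical step is the semicontinuity argument that keeps $\operatorname{supp}(\nu)$ inside $G$, which is precisely what is needed to legitimately invoke the Maximum Principle.
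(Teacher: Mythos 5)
Your proof follows the same route as the paper's: take $\mu$ nearly realizing $\gamma_s(E)$, restrict it to the set where Proposition \ref{aepot} controls the Wolff potential, apply the Maximum Principle to get a global bound, and close with Lemma \ref{cutoffsame}. The lower semicontinuity argument you supply to guarantee $\operatorname{supp}(\nu)\subset G$ is a legitimate technical step the paper leaves implicit, and your verification of it is correct.
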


\begin{proof} Suppose $\gamma_s(E)=t>0$, since otherwise the inequality is trivial.  There exists a measure $\mu$ supported on $E$ such that $\mu(E)\geq 3t/4$ and $||R(\mu)||_{L^{\infty}}\leq1$.  By Proposition \ref{aepot}, there exists $A>1$, depending on $s$ and $d$, such that
$\mu\bigl(\bigl\{x\in \mathbf{R}^d:\mathcal{W}_{\Phi,s}(\mu)(x)>A\bigl\}\bigl)\leq \frac{\mu(\mathbf{R}^d)}{4}.$

Define now $\widetilde{E} = \bigl\{x\in E:\mathcal{W}_{\Phi,s}(\mu)(x)\leq A\bigl\},$
and let $\omega = \chi_{\widetilde{E}}d\mu$.  Then $\omega(E)\geq t/2$, and $\mathcal{W}_{\Phi,s}(\omega)(x)\leq A$ for all $x\in \operatorname{supp}(\omega)$.  If $\tilde\omega = 2^{-s}\omega$, then the maximum principle implies that $\mathcal{W}_{\Phi,s}(\tilde\omega)(x)\leq A$ for all $x\in \mathbf{R}^d$.  Hence $\text{cap}^{(A)}_{\Phi,s}(E) \geq \tfrac{t}{3^s2}$, and applying Lemma \ref{cutoffsame} completes the proof.
\end{proof}

Let $s\in (d-1,d)$, and let $E\subset \mathbf{R}^d$ be a compact set with $\mathcal{H}^s(E)<\infty$.  An immediate consequence of Lemma \ref{Hauszero} and Proposition \ref{nonlinczineq} is that $\gamma_s(E)=0$. This result is essentially equivalent to the main theorem of \cite{ENV11}.




\begin{thebibliography}{ENV1}

\bibitem[AE]{AE12} D. R. Adams, V. Y. Eiderman, \emph{Singular Operators with Antisymmetric Kernels, Related Capacities, and Wolff Potentials.} Int. Math. Res. Notices (2012), doi:10.1093/imrn/rnr258, arXiv:1012.2877.

\bibitem[AH]{AH96}
D.~R. Adams and L.~I. Hedberg, \emph{Function Spaces and Potential Theory}.
  Grundlehren der mathematischen Wissenschaften \textbf{314}, Springer, Berlin, 1996.

\bibitem[ENV1]{ENV10} V. Eiderman, F. Nazarov, A. Volberg, \emph{Vector-valued Riesz potentials: Cartan-type estimates and related capacities}. Proc. Lond. Math. Soc. (3) \textbf{101} (2010), no. 3, 727--758, arXiv:0801.1855.

\bibitem[ENV2]{ENV11} V. Eiderman, F. Nazarov, A. Volberg \emph{The $s$-Riesz transform of an $s$-dimensional measure in $\mathbf{R}^2$ is unbounded for $1<s<2$}. Preprint (2011), arXiv:1109.2260.

\bibitem[EV]{EV11} V. Eiderman, A. Volberg, \emph{$L^2$-norm and estimates from below for Riesz transforms on Cantor sets}. Indiana Univ. Math. J. \textbf{60} (2011), no. 3, arXiv:1012.0941.


\bibitem[Far]{Far99} H. Farag \emph{The Riesz kernels do not give rise to higher dimensional analogues of the Menger-Melnikov curvature}. Publ. Mat. \textbf{43} (1999).

\bibitem[Lan]{Lan70} N. S. Landkof, \emph{Foundations of modern potential theory}. Die Grundlehren der mathematischen Wissenschaften, \textbf{180}. Springer-Verlag, New York-Heidelberg, 1972


\bibitem[MPV]{MPV05} J. Mateu, L. Prat, J. Verdera
\emph{The capacity associated to signed Riesz kernels, and Wolff potentials.} J. Reine Angew. Math. \textbf{578} (2005), 201--223, arXiv:math/0411441.

\bibitem[Mat1]{Mat95} P. Mattila, \emph{Geometry of sets and measures in Euclidean spaces. Fractals and rectifiability}. Cambridge Studies in Advanced Mathematics, \textbf{44}. Cambridge University Press, Cambridge, 1995.

\bibitem[Mat2]{Mat95b}P. Mattila, \emph{Cauchy singular integrals and rectifiability in measures of the plane}. Adv. Math. \textbf{115} (1995), no. 1, 1--34.

\bibitem[MP]{MP95}P. Mattila and D. Preiss, \emph{Rectifiable measures in $\mathbf{R}^n$ and existence of principal values for singular integrals.} J. London Math. Soc. (2) \textbf{52} (1995), no. 3, 482--496.


\bibitem[NPV]{NPV10} F. Nazarov, Y. Peres, A. Volberg. \emph{The power law for the Buffon needle probability of the four-corner Cantor set}. Algebra i Analiz \textbf{22} (2010), no. 1, 82--97, arXiv:0801.2942.

\bibitem[NTV1]{NTV97} F. Nazarov, S. Treil, A. Volberg. \emph{Cauchy integral and Calder\'{o}n-Zygmund operators on nonhomogeneous spaces.}
Internat. Math. Res. Notices 1997, no. 15, 703--726.

\bibitem[NTV2]{NTV03} F. Nazarov, S. Treil, A. Volberg, The $T(b)$-theorem on non-homogeneous spaces. Acta Math. \textbf{190} (2003), no. 2, 151--239.


\bibitem[SW]{SW71} E. M. Stein, G. Weiss, \emph{Introduction to Fourier analysis on Euclidean spaces}. Princeton Mathematical Series, \textbf{32}. Princeton University Press, Princeton, N.J., 1971.

\bibitem[Tao]{Tao08} T. Tao, \emph{A quantitative version of the Besicovitch projection theorem via multiscale analysis}. Proc. Lond. Math. Soc. (3) \textbf{98} (2009), no. 3, 559--584, arXiv:0706.2646.

\bibitem[Tol]{Tol11} X. Tolsa \emph{Calder\'{o}n-Zygmund capacities and Wolff potentials on Cantor sets}. J. Geom. Anal. 21 (2011), no. 1, 195--223, arXiv:1001.2986.

\bibitem[Vih]{V96} M. Vihtil\"{a}, \emph{The boundedness of Riesz $s$-transforms of measures in $\mathbf{R}^n$}.
Proc. Amer. Math. Soc. \textbf{124} (1996), no. 12, 3797--3804.


\end{thebibliography}
 \end{document}